\newcounter{iii}
\newcommand{\bb}{{\mathcal B}}
\newcommand{\aaa}{{\mathcal A}}
\newcommand{\mm}{\mathcal M}
\newcommand{\G}{\mathcal G}
\newcommand{\ff}{\mathcal F}
\theoremstyle{plain}
\newtheorem{thm}{Theorem}
\newtheorem{lem}[thm]{Lemma}
\newtheorem{prop}[thm]{Proposition}
\newtheorem*{obs}{Observation}
\newtheorem{pro}{Problem}
\newtheorem{cor}[thm]{Corollary}
\theoremstyle{definition}
\newtheorem{defi}{Definition}
\numberwithin{equation}{section}
\numberwithin{thm}{section}
\title{Structure and properties of large intersecting families}
\author{Andrey Kupavskii}
\address{University of Birmingham,
Moscow Institute of Physics and Technology; Email: {\tt kupavskii@ya.ru}.} \thanks{The research was supported by the Advanced Postdoc.Mobility grant no. P300P2\_177839 of the Swiss National Science Foundation.}
\date{}
\begin{document}
\maketitle
\begin{abstract} We say that a family of $k$-subsets of an $n$-element set is {\it intersecting}, if any two of its sets intersect. In this paper we study properties and structure of large intersecting families.

We prove a conclusive version of Frankl's theorem on intersecting families with bounded maximal degree. This theorem, along with its generalizations to cross-intersecting families, strengthens the results obtained by Frankl, Frankl and Tokushige, Kupavskii and Zakharov and others.

We study the structure of large intersecting families, obtaining some very general structural theorems which extend the results of Han and Kohayakawa, as well as Kostochka and Mubayi.

We also obtain an extension of some classic problems on intersecting families introduced in the 70s. We extend an old result of Frankl, in which he determined the size and structure of the largest intersecting family of $k$-sets with covering number $3$ for $n>n_0(k)$. We obtain the same result for $n>Ck$, where $C$ is an absolute constant. Finally, we obtain a similar extension for the following problem of Erd\H os, Rothschild and Sz\'emeredi: what is the largest intersecting family, in which no element is contained in more than a $c$-proportion of the sets, for different values of $c$.
\end{abstract}

\section{Introduction}

For integers $a\le b$,  put $[a,b]:=\{a,a+1,\ldots, b\}$, and denote $[n]:=[1,n]$ for shorthand. For a set $X$, denote by $2^{X}$ its power set and, for integer $k\ge 0$,  denote by ${X\choose k}$ the collection of all $k$-element subsets ({\it $k$-sets}) of $X$. A \underline{family} is simply a collection of sets.  We call a family \underline{intersecting}, if any two of its sets intersect. A ``trivial'' example of an intersecting family is the family of all sets containing a fixed element. We call a family \underline{non-trivial}, if the intersection of all sets from the family is empty.

One of the oldest and most famous results in extremal combinatorics is the Erd\H os--Ko--Rado theorem:
\begin{thm}[\cite{EKR}] Let $n\ge 2k>0$ and consider an intersecting family $\ff\subset {[n]\choose k}$. Then $|\ff|\le {n-1\choose k-1}$. Moreover, for $n>2k$ the equality holds only for the families of all $k$-sets containing a given element. \end{thm}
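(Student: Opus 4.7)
The plan is to use Katona's cyclic permutation (circle) method, which gives both the bound and, with a small extra argument, the equality case for $n>2k$.

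First, I would establish the following key lemma: fix a cyclic arrangement of $[n]$ on a circle and call a $k$-subset an \emph{arc} if its elements occupy $k$ consecutive positions. For $n\ge 2k$, any intersecting collection of arcs on such a circle has size at most $k$. To see this, fix one arc $A=\{i,i+1,\ldots,i+k-1\}$ in the family; every other arc $B$ that meets $A$ must begin at a position in $\{i-k+1,\ldots,i-1\}\cup\{i+1,\ldots,i+k-1\}$. Pair these $2(k-1)$ candidate starting positions by $j\leftrightarrow j+k$. When $n\ge 2k$, the two arcs in each pair are disjoint, so at most one member of each pair lies in the family. This yields at most $(k-1)+1=k$ arcs.

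Next comes the double count. For each linear permutation $\pi$ of $[n]$ (of which there are $n!$), consider the $n$ cyclically consecutive $k$-blocks in $\pi$. Counting pairs $(\pi,A)$ with $A\in\ff$ forming such a block, each fixed $A\in\ff$ arises from exactly $n\cdot k!\,(n-k)!$ permutations (choose the cyclic position of the block, then order the elements of $A$ and the elements of $[n]\setminus A$). On the other hand, by the lemma each $\pi$ contributes at most $k$ pairs. Therefore
\[
|\ff|\cdot n\cdot k!\,(n-k)! \;\le\; n!\cdot k,
\]
which rearranges to $|\ff|\le \frac{k}{n}\binom{n}{k}=\binom{n-1}{k-1}$.

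For the equality case when $n>2k$, equality in the double count forces every cyclic arrangement to contain exactly $k$ sets of $\ff$ as arcs. A short case analysis of the lemma's argument shows that this can happen only if, in every circle, the $k$ chosen arcs all share a common element; a standard transitivity argument then propagates this to force a common element across all cyclic orderings, giving a star $\ff=\{F\in\binom{[n]}{k}:x\in F\}$ for some $x\in[n]$. The main obstacle is precisely this equality analysis: the counting step is clean, but ruling out all nearly extremal non-trivial intersecting families requires using the strict inequality $n>2k$ to exclude configurations where the two arcs in one of the paired positions coincide or where non-star families could achieve $k$ arcs on every circle; alternatively one can bypass this by invoking a shifting argument to reduce the extremal problem to the initial segment order and then read off the structure directly.
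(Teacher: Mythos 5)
The paper does not prove this statement at all: it is the classical Erd\H{o}s--Ko--Rado theorem, cited from \cite{EKR} as background, so there is no in-paper argument to compare yours against. Your proof of the inequality is Katona's cyclic permutation argument and is correct as written: the pairing of the $2(k-1)$ candidate starting positions into $k-1$ disjoint pairs (using $n\ge 2k$) gives at most $k$ intersecting arcs per circle, and the double count $|\ff|\cdot n\cdot k!\,(n-k)!\le n!\cdot k$ yields $|\ff|\le\binom{n-1}{k-1}$. The equality case, however, is not yet a proof but a plan. Two claims are asserted without argument: (i) for $n>2k$, an intersecting family of exactly $k$ arcs on a circle must pass through a common point, and (ii) the common points on different circles cohere into a single global element. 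Claim (i) is true but needs care when $2k<n<3k-2$, where two arcs chosen from ``incompatible'' sides of the threshold can still intersect by wrapping around the circle; one must check that such a choice cannot be extended to a full intersecting family of $k$ arcs. Claim (ii) is the genuinely delicate step: you need an actual mechanism for propagation, e.g., showing that any two members of $\ff$ can be realized simultaneously as arcs of a common cyclic order (by placing their intersection in the overlap), so that the star center of that circle lies in both, and then arguing that the center cannot vary from circle to circle. As it stands, the uniqueness part is a correct outline with the hard work deferred; either fill in these two lemmas or, as you suggest, replace the whole equality analysis by a shifting/compression argument, which is a cleaner route to uniqueness for $n>2k$.
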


%For $i\in[k]$, let us put $I_i:=[i+1,k+i]$.
Answering a question of Erd\H os, Ko, and Rado, Hilton and Milner \cite{HM} found the size and structure of the largest non-trivial intersecting families of $k$-sets. For $k\ge 4$, up to a permutation of the ground set, it must have the form $\mathcal H_{k}$, where for integer $2\le u\le k$
\begin{equation}\label{eqhu}\mathcal H_u:=\ \Big\{A\in {[n]\choose k}\ :\ [2,u+1]\subset A\Big\}\cup\Big\{A\in{[n]\choose k}\ :\  1\in A, [2,u+1]\cap A\ne \emptyset\Big\}.\end{equation}
$\mathcal J_1$ has size ${n-1\choose k-1}-{n-k-1\choose k-1}+1$, which is much smaller than ${n-1\choose k-1}$, provided $n$ is large as compared to $k$.

For a family $\ff\subset 2^{[n]}$ and $i\in [n]$, the \underline{degree} $d_i(\ff)$ of $i$ in $\ff$ is the number of sets from $\ff$ containing $i$. Let $\Delta(\ff)$ stand for the \underline{maximal degree} of an element in $\ff$. Frankl \cite{Fra1} proved the following far-reaching generalization of the Hilton--Milner theorem.

\begin{thm}[\cite{Fra1}]\label{thmfr} Let $n>2k>0$ and $\ff\subset {[n]\choose k}$ be an intersecting family. If $\Delta(\ff)\le {n-1\choose k-1}-{n-u-1\choose k-1}$ for some integer $3\le u\le k$, then \begin{equation*}|\ff|\le {n-1\choose k-1}+{n-u-1\choose n-k-1}-{n-u-1\choose k-1}.\end{equation*}\end{thm}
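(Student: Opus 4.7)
My plan is to recast the theorem as a cross-intersecting inequality. Let $1$ be a vertex of maximum degree in $\ff$ (relabel $[n]$ if necessary) and split $\ff=\ff_1\sqcup\ff_{\bar 1}$ according to whether $1$ lies in the set. Setting $\aaa:=\{A\setminus\{1\}:A\in\ff_1\}\subset \binom{[2,n]}{k-1}$ and $\bb:=\ff_{\bar 1}\subset \binom{[2,n]}{k}$, the intersecting property of $\ff$ is equivalent to the conjunction that $\bb$ is intersecting and that $(\aaa,\bb)$ is cross-intersecting on $[2,n]$. Using the identity
\[
|\mathcal H_u|=\binom{n-1}{k-1}-\binom{n-u-1}{k-1}+\binom{n-u-1}{k-u},
\]
the hypothesis $|\aaa|=d_1(\ff)\le \binom{n-1}{k-1}-\binom{n-u-1}{k-1}$ together with the target $|\ff|\le|\mathcal H_u|$ combine into the single sharp cross-intersecting estimate
\[
|\aaa|+|\bb|\le \binom{n-1}{k-1}-\binom{n-u-1}{k-1}+\binom{n-u-1}{k-u}.
\]

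Next I would shift. Apply all $(i,j)$-shifts with $2\le i<j\le n$ to $\aaa$ and $\bb$ simultaneously. None of these shifts crosses the partition by the element $1$, so $|\aaa|=d_1(\ff)$ is preserved and the degree hypothesis continues to hold; the shifts also preserve the sizes of $\aaa,\bb$, the intersecting property of $\bb$, and the cross-intersection of $(\aaa,\bb)$. Henceforth assume $\aaa$ and $\bb$ are both shifted on the ground set $[2,n]$.

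The structural heart is then a dichotomy for the shifted intersecting family $\bb$: \emph{either} $\bb\subseteq \{B:[2,u+1]\subset B\}$, in which case $|\bb|\le \binom{n-u-1}{k-u}$ and the desired bound follows immediately from the hypothesis; \emph{or} $\bb$ contains some set $B^\star$ not containing $[2,u+1]$. In the latter case, shiftedness of $\bb$ propagates the violation into a controlled subfamily, and cross-intersection with the shifted $\aaa$ obstructs, for each such surplus $B$, a distinct $(k-1)$-subset of $[2,n]$ from lying in $\aaa$ beyond the $\binom{n-u-1}{k-1}$ sets already missing; this extra deficit in $|\aaa|$ absorbs the surplus in $|\bb|$ and keeps the sum bounded by $|\mathcal H_u|$.

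The expected main obstacle is executing the second case sharply. The natural vehicle is a Kruskal--Katona-type or explicit injection argument (in the spirit of the cross-intersecting machinery of Frankl and Frankl--Tokushige): for each $B\in\bb$ with $[2,u+1]\not\subset B$, one must exhibit a distinct obstructed $(k-1)$-set $\varphi(B)\subset [2,n]$ forced out of $\aaa$ by cross-intersection but not already counted in $\binom{[u+2,n]}{k-1}$. The assumption $n>2k$ enters through supplying enough room in $[u+2,n]$ for the injection to fit, and the family $\mathcal H_u$ serves as the calibration point at which the injection is exactly saturated; verifying this saturation against the binomial gap $\binom{n-u-1}{k-1}-\binom{n-u-1}{k-u}$ is the delicate combinatorial step.
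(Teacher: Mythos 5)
Your opening reduction is the right first move and matches the paper's general strategy: split at a maximum-degree element into $\aaa=\ff(1)$ and $\bb=\ff(\bar 1)$, so that $|\ff|=|\aaa|+|\bb|$, $\bb$ is intersecting, and $(\aaa,\bb)$ is cross-intersecting on $[2,n]$. But the ``single sharp cross-intersecting estimate'' you extract from this reduction is false as stated, and this is a genuine conceptual gap. You claim: if $(\aaa,\bb)$ is cross-intersecting on $[2,n]$, $\bb$ is intersecting, and $|\aaa|\le X:=\binom{n-1}{k-1}-\binom{n-u-1}{k-1}$, then $|\aaa|+|\bb|\le X+\binom{n-u-1}{k-u}$. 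Take $\aaa=\emptyset$ and $\bb=\{B\in\binom{[2,n]}{k}:2\in B\}$. This pair is cross-intersecting (vacuously), $\bb$ is intersecting and shifted on $[2,n]$, and $|\aaa|=0\le X$; yet $|\aaa|+|\bb|=\binom{n-2}{k-1}$, which for $n$ large compared to $k$ exceeds $X+\binom{n-u-1}{k-u}$ (already for $u=k$ the right-hand side is $\binom{n-1}{k-1}-\binom{n-k-1}{k-1}+1$, and $\binom{n-k-1}{k-1}\gg\binom{n-2}{k-2}$). The reason the theorem is not contradicted is that this configuration has $\Delta(\ff)=d_2(\ff)=\binom{n-2}{k-1}>X$: element $1$ is not of maximum degree. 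In other words, the hypothesis $\Delta(\ff)\le X$ says \emph{every} degree is small, not just $d_1$. Once you ``recast as a cross-intersecting inequality'' keeping only $|\aaa|\le X$, you have discarded the constraint that makes the statement true. Moreover, your subsequent $(i,j)$-shifts with $2\le i<j\le n$ preserve $d_1$ but can push $d_2$ above $d_1$, so even if you wished to carry the max-degree information along, shifting on $[2,n]$ destroys it.

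The way the paper (and the literature it cites) avoids this trap is to phrase the cross-intersecting lemma with the hypothesis on the \emph{other} side: Theorem~\ref{thm1} has hypothesis $\gamma(\ff)=|\bb|\ge\binom{n-u-1}{n-k-1}$, and Theorem~\ref{thmfr} is deduced by a case split on $\gamma(\ff)$ --- if $\gamma(\ff)<\binom{n-u-1}{n-k-1}$ then $|\ff|=\Delta(\ff)+\gamma(\ff)\le X+\binom{n-u-1}{n-k-1}$ trivially, and otherwise Theorem~\ref{thm1} applies. With a lower bound on $|\bb|$, the cross-intersecting formulation is self-contained: Kruskal--Katona (Theorem~\ref{thmHil}) is invoked to replace $\aaa$ and $\bb$ by lexicographic families, and then the quantitative work is done by comparing characteristic sets via the biregular bipartite switching of Lemmas~\ref{lemdiv} and~\ref{lemres}. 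This is different from your plan of shifting $\bb$ and running an ad hoc injection; in fact the injection is precisely the part you explicitly leave unexecuted (``this extra deficit in $|\aaa|$ absorbs the surplus in $|\bb|$''), and given that the reduction you would feed into it is already wrong, there is no reason to expect the bookkeeping to close up. So there are two independent problems: the cross-intersecting reformulation drops a necessary hypothesis, and the combinatorial core (the injection/absorption step) is sketched but not constructed.
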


One can deduce the Hilton--Milner theorem from the $u=k$ case of Theorem~\ref{thmfr}. Theorem~\ref{thmfr} is sharp for integer values of $u$, as witnessed by the example \eqref{eqhu}. On a high level, it provides us with an upper bound on $|\ff|$ in terms of the size of the largest trivial subfamily ({\it star}) in $\ff$. Let us state a stronger version of Theorem~\ref{thmfr} in dual terms.
For a family $\ff$, the \underline{diversity} $\gamma(\ff)$ is the quantity $|\ff|-\Delta(\ff)$. One may think of diversity as of the distance from $\ff$ to the closest star.

The following strengthening of Theorem~\ref{thmfr} was obtained by Kupavskii and Zakharov \cite{KZ}.
\begin{thm}[\cite{KZ}]\label{thm1} Let $n>2k>0$ and $\ff\subset {[n]\choose k}$ be an intersecting family. If $\gamma(\ff)\ge {n-u-1\choose n-k-1}$ for some real $3\le u\le k$, then \begin{equation}\label{eq01}|\ff|\le {n-1\choose k-1}+{n-u-1\choose n-k-1}-{n-u-1\choose k-1}.\end{equation}
\end{thm}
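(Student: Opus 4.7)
The strategy is a shifting reduction followed by a cross-intersecting Kruskal--Katona analysis. First I would apply the standard shifts $s_{ij}$ to $\ff$; these preserve $|\ff|$ and intersectingness, but may lower $\gamma$. The subtlety is handled by a dichotomy: either $\gamma(\ff^*)\ge\binom{n-u-1}{n-k-1}$ after shifting, in which case it suffices to prove the theorem for the shifted family $\ff^*$, or $\gamma(\ff^*)<\binom{n-u-1}{n-k-1}$, in which case the trivial inequality $|\ff|=|\ff^*|\le\binom{n-1}{k-1}+\gamma(\ff^*)$ already yields \eqref{eq01} (checked against the target RHS using $u\ge 3$). Henceforth assume $\ff$ shifted; for $n>2k$, element $1$ realises $\Delta(\ff)$, so $\gamma(\ff)=|\ff(\bar 1)|$, where $\ff(\bar 1):=\{F\in\ff:1\notin F\}$.

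\textbf{Cross-intersecting pair and Kruskal--Katona.} Decompose $\ff=\ff(1)\sqcup\ff(\bar 1)$ and set
\[\aaa:=\{F\setminus\{1\}:F\in\ff(1)\}\subset\binom{[2,n]}{k-1},\qquad \bb:=\ff(\bar 1)\subset\binom{[2,n]}{k}.\]
Intersectingness of $\ff$ translates to $(\aaa,\bb)$ being a cross-intersecting pair, equivalently: $\aaa$ is disjoint from the lower $(k-1)$-shadow $\partial^{(k-1)}\bb^c$ of the complement family $\bb^c:=\{[2,n]\setminus B:B\in\bb\}\subset\binom{[2,n]}{n-1-k}$. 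Writing $|\bb|=\binom{y}{n-1-k}$ for real $y\ge n-u-1$ (valid by hypothesis), the Lov\'asz form of the Kruskal--Katona theorem gives $|\partial^{(k-1)}\bb^c|\ge\binom{y}{k-1}$, whence $|\aaa|\le\binom{n-1}{k-1}-\binom{y}{k-1}$ and
\[|\ff|\le\binom{n-1}{k-1}+\varphi(y),\qquad \varphi(y):=\binom{y}{n-1-k}-\binom{y}{k-1}.\]

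\textbf{Main obstacle.} It remains to verify $\varphi(y)\le\varphi(n-u-1)$ on the feasible range of $y$. The trouble is that $\varphi$ is \emph{not} globally monotone on $[n-u-1,\infty)$: since $n-1-k>k-1$, the term $\binom{y}{n-1-k}$ has higher polynomial degree and eventually dominates, so the desired inequality fails for $y$ very large. The escape is that the shifted intersecting structure bounds $y$ from above: Frankl's diversity inequality $\gamma(\ff)\le\binom{n-3}{k-2}$ for shifted intersecting families with $n>2k$ forces $y\le n-3$. On the restricted interval $[n-u-1,n-3]$ one analyses the discrete difference $\varphi(y+1)-\varphi(y)=\binom{y}{n-k-2}-\binom{y}{k-2}$, together with its real-analytic analogue, to show that $\varphi$ attains its maximum at the left endpoint $y=n-u-1$. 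Carrying out this function analysis uniformly over the full real range $3\le u\le k$, and in particular near the extremes $u=3$ and $u=k$, is the main technical effort and completes the argument.
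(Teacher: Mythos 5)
Your reduction to the cross-intersecting pair $(\aaa,\bb)$ and the shadow/Kruskal--Katona bound $|\aaa|\le\binom{n-1}{k-1}-\binom{y}{k-1}$ is the standard opening, and it is also how \cite{KZ} and Section~\ref{sec3} of this paper begin. But two things break. First, the preliminary shifting dichotomy does not close: in the branch where shifting drops the diversity below $\binom{n-u-1}{n-k-1}$, the bound $|\ff|\le\binom{n-1}{k-1}+\gamma(\ff^*)$ only gives $|\ff|<\binom{n-1}{k-1}+\binom{n-u-1}{n-k-1}$, which exceeds the target by $\binom{n-u-1}{k-1}>0$; and re-running your Kruskal--Katona step with the smaller $y^*$ makes things worse, since $\varphi$ is decreasing on $[n-k-1,n-4]$, so smaller diversity permits \emph{larger} families. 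Shifting should simply be dropped: the decomposition works verbatim with $1$ chosen as a maximum-degree element of the original family.

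The essential gap is your resolution of the ``main obstacle''. The cap $y\le n-3$ is exactly the statement $\gamma(\ff)\le\binom{n-3}{k-2}$, and this is not a legitimate input: Frankl proved it only for $n\ge 6k^2$, the range was improved to $n\ge Ck$ only recently in \cite{Kup21}, and Huang \cite{Hua} showed it is \emph{false} for $2k<n\le(2+\sqrt3)k$ --- moreover Huang's example $\{F:|F\cap[2t+1]|\ge t+1\}$ is invariant under shifts, so restricting to shifted families does not rescue the claim. Without some such cap the inequality $\varphi(y)\le\varphi(n-u-1)$ genuinely fails: $\varphi(n-2)=0$ while $\varphi(n-u-1)<0$ for $n\ge 2k+2$, reflecting the fact that the pure cross-intersecting problem with only a lower bound on $|\bb|$ admits, e.g., the double star $\aaa=\{A:2\in A\}$, $\bb=\{B:2\in B\}$ with $|\aaa|+|\bb|=\binom{n-1}{k-1}$. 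So you must feed in more of the intersecting/maximum-degree structure than the single inequality $|\aaa|\le\binom{n-1}{k-1}-\binom{y}{k-1}$ captures. This is precisely the difficulty the paper flags (``the size of the family is not monotone w.r.t. diversity'') and circumvents without any computation: replace $\aaa,\bb$ by lexicographic families (Theorem~\ref{thmHil}, Proposition~\ref{cross2}) and use the bipartite switching Lemmas~\ref{lemdiv} and~\ref{lemres}, where Lemma~\ref{lemdiv} pushes any non-resistant pair to a resistant one while increasing $|\bb|$ and not decreasing $|\aaa|+|\bb|$, and Lemma~\ref{lemres} gives monotonicity of the sum along resistant pairs; no a priori upper bound on the diversity is ever needed. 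That switching step (or an equivalent structural substitute) is the missing idea in your argument.
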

We note that the Hilton--Milner theorem, as well as Theorem~\ref{thmfr}, is immediately implied by Theorem~\ref{thm1}. The deduction of Theorem~\ref{thm1} from Theorem~\ref{thmfr} for integer values of $u$ is possible, but not straightforward. Theorem~\ref{thm1} provides the strongest known stability result for the Erd\H os--Ko--Rado theorem for large intersecting families, more precisely, for the families of size at least ${n-2\choose k-2}+2{n-3\choose k-2}$. There are several other stability results for the Erd\H os--Ko--Rado theorem, see, e.g. \cite{DT,EKL,Fri}.

In Section~\ref{sec3}, we prove a conclusive version of Theorem~\ref{thm1}, which gives the precise dependence of size of an intersecting family $\ff$ of $k$-sets on (the lower bound on) $\gamma(\ff)$. The result then is extended to cover the equality case, as well as the weighted case and the case of cross-intersecting families. In particular, it strengthens the results of \cite{Fra1}, \cite{FT}, \cite{KZ}.
 One of the main ingredients in the proof of the main result of Section~\ref{sec3} (as well as in the proofs of Theorems~\ref{thmfr},~\ref{thm1}) is the famous Kruskal--Katona theorem \cite{Kr,Ka}. This theorem is a central result in extremal set theory and has numerous applications (see, e.g., \cite{BK, BT}). Another key ingredient is the bipartite switching trick, which was introduced in \cite{KZ} (similar ideas appeared earlier in \cite{FK1}). In this paper, we exploit this trick to a much greater extent.

 We do not state the aforementioned theorem in this section since it requires some preparations. Instead, we state the following corollary, which can be seen as a generalization of the Hilton--Milner phenomena.

\begin{cor}\label{corhm}
Let $n>2k\ge 8$ and $\ff\subset {[n]\choose k}$ be an intersecting family. Suppose that $\gamma(\ff)>{n-u-1\choose n-k-1}$ for some integer $4\le u\le k$. Then
\begin{equation}\label{eqhm}|\ff|\le {n-1\choose k-1}+{n-u-1\choose n-k-1}-{n-u-1\choose k-1}-{n-k-2\choose k-2}+1.\end{equation}
Moreover, the same inequality holds for $n>2k\ge 6$ with $u=3$ if ${n-4\choose k-3}<\gamma(\ff)\le {n-3\choose k-2}-{n-k-2\choose k-2}+1$ or ${n-3\choose k-2}< \gamma(\ff)\le {n-3\choose k-2}+{n-4\choose k-2}-{n-k-2\choose k-2}+1$.
\end{cor}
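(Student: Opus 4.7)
The plan is to deduce Corollary~\ref{corhm} from Theorem~\ref{thm1} by exploiting the continuity of the bound \eqref{eq01} in the parameter $u$: because Theorem~\ref{thm1} allows real $u$ in $[3,k]$, any slack in the hypothesis $\gamma(\ff)\ge\binom{n-u-1}{n-k-1}$ can be converted into an improvement of the right-hand side by re-applying the theorem at a smaller parameter value.

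Concretely, given an integer $4\le u\le k$ and a family $\ff$ with $\gamma(\ff)>\binom{n-u-1}{n-k-1}$, I would choose $u^{*}<u$ real so that $\binom{n-u^{*}-1}{n-k-1}=\gamma(\ff)$, interpreting the binomial coefficient through the Kruskal--Katona continuous extension. Since $\gamma(\ff)$ is an integer strictly exceeding $\binom{n-u-1}{n-k-1}$, one in fact has $\gamma(\ff)\ge\binom{n-u-1}{n-k-1}+1$, which pins down a quantitative gap $u-u^{*}$. Applying Theorem~\ref{thm1} at $u^{*}$ yields
\[
|\ff|\ \le\ \binom{n-1}{k-1}+\gamma(\ff)-\binom{n-u^{*}-1}{k-1}.
\]
Comparing with \eqref{eqhm}, the task reduces to the arithmetic inequality
\[
\binom{n-u^{*}-1}{k-1}-\binom{n-u-1}{k-1}\ \ge\ \gamma(\ff)-\binom{n-u-1}{n-k-1}+\binom{n-k-2}{k-2}-1.
\]
Using Pascal's identity $\binom{m+1}{r}-\binom{m}{r}=\binom{m}{r-1}$ and $n>2k\ge 8$, one can unfold both sides into sums of binomials $\binom{j}{k-2}$ on the left and $\binom{j}{n-k-2}$ on the right (with $j$ ranging over a short interval determined by $u-u^{*}$), and then invoke $\binom{j}{k-2}\ge\binom{j}{n-k-2}$ for $j$ in the relevant range, with the extra $\binom{n-k-2}{k-2}-1$ emerging from the first full integer step.

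For the ``moreover'' clause at $u=3$, the target range of $\gamma(\ff)$ extends beyond $\binom{n-4}{k-3}$, which is the largest diversity covered by a direct application of Theorem~\ref{thm1} (whose hypothesis requires $u\ge 3$). Here I would appeal to the conclusive real-valued strengthening of Theorem~\ref{thm1} announced in Section~\ref{sec3}, which is proved by the same bipartite switching trick of \cite{KZ} and remains valid for $u$ in an extended range; applied at the real $u^{*}$ with $\binom{n-u^{*}-1}{n-k-1}=\gamma(\ff)$, it yields the bound in the first subrange. The second subrange, where $\gamma(\ff)>\binom{n-3}{k-2}$, would be handled by a further iteration of the same real-parameter choice, now starting from the $u=2$ level of diversity.

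The main obstacle I anticipate is the precise bookkeeping needed to produce the \emph{exact} additive term $-\binom{n-k-2}{k-2}+1$ in \eqref{eqhm}: this is a genuine refinement over Theorem~\ref{thm1}, so one must track the strict inequality $\gamma(\ff)>\binom{n-u-1}{n-k-1}$ through Kruskal--Katona quantitatively rather than merely qualitatively. A secondary concern is ensuring that the extended real-$u$ version of Theorem~\ref{thm1} remains sharp at the boundary $u=3$ and just beyond, which is precisely where the two subranges of the ``moreover'' clause live.
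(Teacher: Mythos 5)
There is a genuine gap: the arithmetic inequality your reduction hinges on is false. Take the extreme case $u=k$, so the hypothesis is $\gamma(\ff)\ge 2$ and you must choose the real $u^*$ with ${n-u^*-1\choose n-k-1}=2$. Writing $n-u^*-1=n-k-1+\epsilon$, one has ${n-k-1+\epsilon\choose n-k-1}=\prod_{i=1}^{n-k-1}(1+\epsilon/i)$, so $\epsilon\approx \ln 2/\ln(n-k)$, which tends to $0$ as $n$ grows. Consequently ${n-u^*-1\choose k-1}-{n-k-1\choose k-1}\approx \epsilon\,\frac{k-1}{n-k}{n-k-1\choose k-1}$, whereas your target requires this gain to be at least ${n-k-2\choose k-2}-1+1=\frac{k-1}{n-k-1}{n-k-1\choose k-1}$; the two differ by the factor $\epsilon\ll 1$. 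In other words, moving the real parameter of Theorem~\ref{thm1} just far enough to absorb one extra unit of diversity improves the bound by only a $O(1/\log n)$ fraction of the required drop ${n-k-2\choose k-2}-1$. This is not a bookkeeping issue but a structural one: the true extremal function of $\gamma$ is a step function, Theorem~\ref{thm1} is only its smooth lower envelope (sharp exactly at integer $u$), and the content of Corollary~\ref{corhm} is precisely the jump that the envelope cannot see. The paper instead invokes Theorem~\ref{thmfull1}: it verifies that $\gamma:={n-u-1\choose n-k-1}+1$ is the \emph{next resistant number} after ${n-u-1\choose n-k-1}$ (checking Definition~\ref{defresnu} for $T_\gamma=\{u+1,k+2\}$ when $k\ge5$, with a separate computation for $k=4$) and then evaluates the bound \eqref{eqfull1} at that resistant value, which produces the exact term $-{n-k-2\choose k-2}+1$.

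Your plan for the ``moreover'' clause has a second problem: both subranges there have $\gamma(\ff)>{n-4\choose k-3}$, which lies strictly outside the scope of Theorem~\ref{thmfull1} (the paper explicitly restricts that theorem to $\gamma\le{n-4\choose k-3}$, since $\mathcal H_2$ shows no further improvement is possible in general for larger diversity). So there is no ``extended real-$u$'' version to appeal to in that regime; the paper proves the $u=3$ case by a separate bipartite switching argument in Section~\ref{sec33}, comparing $\ff(1),\ff(\bar1)$ against the lexicographic families with characteristic sets $\{2,3\}$ and $\{2,4\}$ and using the Hilton--Milner-type bound of Corollary~\ref{corkz} on the independent set inside the biregular exchange graph.
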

Compare \eqref{eqhm} with \eqref{eq01} for integer $u$. The difference in the bounds is ${n-k-2\choose k-2}-1$, while the lower bounds on diversity differ by $1$. Thus, Corollary~\ref{corhm} states that the size of the largest family with diversity at least $\gamma$ has a big drop when $\gamma$ passes the point ${n-u-1\choose n-k-1}$ for integer $u$. We also note that Corollary~\ref{corhm} is sharp, as it will be clear from Section~\ref{sec3}.\\

Numerous authors aimed to determine {\it precisely}, what are the largest intersecting families in ${[n]\choose k}$ with certain restrictions. One of such questions was studied by Han and Kohayakawa, who determined the largest family of intersecting families that is neither contained in the Erd\H os--Ko--Rado family, nor in the Hilton--Milner family. In our terms, the question can be simply restated as follows: what is the largest intersecting family with $\gamma(\ff)\ge 2$? The proof of Han and Kohayakawa is quite technical and long. Kruskal--Katona-based arguments allow for a very short and simple proof in the case $k\ge 5$. For $i\in[k]$ let us put $I_i:=[i+1,k+i]$ and
$$\mathcal J_i:=\ \{I_1,I_i\}\cup \Big\{F\in{[n]\choose k}\ :\ 1\in F, F\cap I_1\ne \emptyset, F\cap I_i\ne \emptyset\Big\}.$$
We note that $\mathcal J_i\subset{[n]\choose k}$ and that $\mathcal J_i$ is intersecting for every $i\in [k]$. Moreover, $\gamma(\mathcal J_i) = 2$ for $i>1$ and $\mathcal J_1$ is the Hilton--Milner family. It is an easy calculation to see that $|\mathcal J_i|>|\mathcal J_{i+1}|$ for every $k\ge 4$ and $i\in [k-1]$.\footnote{Indeed, the difference $|\mathcal J_i|-|\mathcal J_{i+1}|$ is ${n-k-2\choose k-2}-1$ for $i=1$ and ${n-k-i\choose k-1}-{n-k-i-1\choose k-1}={n-k-i-1\choose k-2}$ for $i\ge 2$.}

\begin{thm}[\cite{HK}]\label{thmhk} Let $n>2k$, $k\ge 4$. Then any intersecting family $\ff$ with $\gamma(\ff)\ge 2$ satisfies
\begin{equation}\label{eqhk}|\ff|\le {n-1\choose k-1}-{n-k-1\choose k-1}-{n-k-2\choose k-2}+2,\end{equation}
moreover, for $k\ge 5$ the equality is attained only on the families isomorphic to $\mathcal J_2$.
\end{thm}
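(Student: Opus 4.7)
The plan is to reduce to a cross-intersecting pair via the maximum-degree star, pass to complements, and apply the Kruskal--Katona theorem. Without loss of generality, element $1$ has maximum degree in $\ff$, so $m := |\ff(\bar 1)| = \gamma(\ff) \ge 2$, where $\ff(\bar 1) := \{F \in \ff : 1 \notin F\}$. Setting $\ff'(1) := \{F \setminus \{1\} : F \in \ff, \, 1 \in F\} \subset \binom{[2,n]}{k-1}$, the intersecting property of $\ff$ makes $\ff'(1)$ and $\ff(\bar 1)$ cross-intersecting on $[2, n]$. Passing to the $(n-k-1)$-set complements $\mathcal{C} := \{[2,n] \setminus B : B \in \ff(\bar 1)\}$, cross-intersection becomes the statement that no $A \in \ff'(1)$ is contained in any $C \in \mathcal{C}$. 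Writing $\partial_{k-1}(\mathcal{C})$ for the $(k-1)$-subsets of $[2,n]$ lying inside some $C \in \mathcal{C}$, this yields
\[ |\ff| = |\ff'(1)| + m \le \binom{n-1}{k-1} + m - |\partial_{k-1}(\mathcal{C})|. \]

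What remains is to prove $|\partial_{k-1}(\mathcal{C})| \ge m + \binom{n-k-1}{k-1} + \binom{n-k-2}{k-2} - 2$ for every $m \ge 2$. By Kruskal--Katona, $|\partial_{k-1}(\mathcal{C})|$ is bounded below by the $(k-1)$-shadow $\mu(m)$ of the colex-initial segment of $m$ sets in $\binom{[2,n]}{n-k-1}$. An explicit cascade calculation yields $\mu(2) = \binom{n-k-1}{k-1} + \binom{n-k-2}{k-2}$, attained when the two $(n-k-1)$-sets share exactly $n-k-2$ elements, so the case $m = 2$ matches the claimed bound with equality. For $3 \le m \le k-1$, the $j$-th colex-initial set contributes $\binom{n-k-j}{k-j} \ge 1$ new $(k-1)$-subsets, giving the inequality with room to spare. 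For the plateau $k \le m \le n-k$, the shadow stabilizes at $\binom{n-k}{k-1} = \mu(2) + \binom{n-k-2}{k-3}$, and the required inequality reduces to $\binom{n-k-2}{k-3} \ge m - 2$; its binding instance $m = n-k$ reads $\binom{n-k-2}{k-3} \ge n-k-2$, which is tight when $k = 4$ and strict when $k \ge 5$. For $m > n-k$, the shadow jumps by $\binom{n-k-2}{k-2}$, absorbing any further growth in $m$. I expect this plateau analysis to be the main obstacle, since it is precisely where the $k=4$ boundary admits additional extrema, explaining the $k \ge 5$ restriction in the equality clause.

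For the equality characterization when $k \ge 5$, all three regimes above are strict except $m = 2$ with the two sets of $\mathcal{C}$ sharing $n-k-2$ elements, equivalently $\ff(\bar 1) = \{B_1, B_2\}$ with $|B_1 \cap B_2| = k - 1$. After relabeling we may take $B_1 = I_1$ and $B_2 = I_2$. Saturation of the Kruskal--Katona bound on $|\ff'(1)|$ then forces $\ff'(1)$ to equal the family of $(k-1)$-subsets of $[2, n]$ meeting both $I_1$ and $I_2$, so $\ff \cong \mathcal{J}_2$, as claimed.
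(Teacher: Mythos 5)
Your reduction to the cross-intersecting pair $(\ff'(1),\ff(\bar 1))$ and the shadow computation are sound for $m=2$ and for $3\le m\le k-1$, and your identification of $m=n-k$, $k=4$ as the source of the extra extremal families is correct. But the step ``for $m>n-k$, the shadow jumps by $\binom{n-k-2}{k-2}$, absorbing any further growth in $m$'' is where the proof breaks. The inequality you need, $|\partial_{k-1}(\mathcal C)|\ge m+\binom{n-k-1}{k-1}+\binom{n-k-2}{k-2}-2$ for \emph{every} $m\ge 2$, is false as a statement about cross-intersecting pairs: take $\mathcal B=\ff(\bar 1)$ to be all $k$-subsets of $[2,n]$ containing the element $2$ (which is an intersecting lex-initial segment of size $m=\binom{n-2}{k-1}$) and $\mathcal A$ all $(k-1)$-sets containing $2$; then $|\mathcal A|+|\mathcal B|=\binom{n-1}{k-1}$, which exceeds the right-hand side of \eqref{eqhk}. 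This configuration does not violate the theorem only because it corresponds to the star centred at $2$, where $1$ is \emph{not} the maximum-degree element. So for large $m$ you must invoke the normalization $\Delta(\ff)=d_1(\ff)$ (equivalently, an a priori upper bound on $\gamma(\ff)$, such as Theorem~\ref{thm1} with $u=3$), and this is not a routine Kruskal--Katona computation. Moreover, even in the range $n-k<m\le\binom{n-4}{k-3}$ the quantity $|\mathcal A|+|\mathcal B|$ is not monotone in $m$ — it has many local maxima — and your single sentence does not verify the inequality at each of them; this non-monotonicity is precisely the difficulty that Section~\ref{sec3} of the paper (Lemmas~\ref{lemdiv} and~\ref{lemres}, the ``resistant pair'' analysis) is built to handle. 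Since your equality characterization relies on strictness in all regimes, it inherits the same gap.

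For comparison, the paper's proof is short because it outsources all of this to Corollary~\ref{corhm} (applied with $u=k$) for the bound, and to Theorem~\ref{thmfull1} (via $\gamma_2=2$ for $k\ge5$) to force $\gamma(\ff)=2$ in the equality case, after which the maximal families with diversity exactly $2$ are the $\mathcal J_i$ and one compares their sizes. You are in effect trying to reprove the $u=k$ case of that machinery from scratch; the low-diversity part of your computation is fine, but the high-diversity part needs either the resistant-pair argument or a separate bound showing that families of large diversity are smaller than $|\mathcal J_2|$.
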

We note that Han and Kohayakawa also proved their theorem for $k= 3$, as well as described the cases of equality for  $k=4$. These cases are more tedious and do  not follow from our methods in a straightforward way. However, Theorem~\ref{thmhk} can be deduced without much effort. A slightly weaker version of the theorem above (without uniqueness) is a consequence of the main result in the paper by Hilton and Milner \cite{HM} (cf. also \cite{HK}).

Applying Corollary~\ref{corhm} with $u=k$, we conclude that \eqref{eqhk} holds for $k\ge 4$. The bound is sharp, as witnessed by $\mathcal J_2$. The uniqueness requires hardly more effort, but since it uses Theorem~\ref{thmfull1}, stated in Section~\ref{sec3}, we postpone its proof until Section~\ref{sec4}.

For any set $X$, family $\ff\subset 2^X$ and $i\in X$, we use the following standard notations
\begin{align*}
\ff(\bar i):=&\ \{F\ :\ i\notin F\in\ff\}\ \ \ \text{and}\\
\ff(i):=&\ \{F\setminus\{i\}\ :\ i\in F\in\ff\}.
\end{align*}
Note that $\ff(\bar i)\subset {X\setminus \{i\}\choose k}$ and $\ff(i)\subset {X\setminus \{i\}\choose k-1}$.

Denote by $\mathcal E_{l}$ the maximal intersecting family with $|\mathcal E_l(\bar 1)|=l$, $|\bigcap_{E\in \mathcal E_l(\bar 1)}E|=k-1$ (note that the family is defined up to isomorphism). Note that $\mathcal J_2$ is isomorphic to $\mathcal E_2$.\footnote{Cf. the discussion in the beginning of Section~\ref{sec31} and note the relation to the lexicographic families, defined in Section~\ref{sec3}: $\mathcal E_l(\bar 1)$ is isomorphic to $\mathcal L([2,n],l,k)$.
Using Theorems~\ref{thmfull2} and~\ref{thmfulleq} (or by tedious direct calculation), we can conclude that $|\mathcal E_{k-1}|<|\mathcal E_{k-2}| = |\mathcal E_{n-k}|<|\mathcal E_{k-3}|<\ldots<|\mathcal E_{1}|$.
}
 It is not difficult to see that for $k-1<l<n-k$ we have $\mathcal E_l\subset \mathcal E_{n-k}$: we have $\mathcal E_l(1) = \mathcal E_{n-k}(1)$ for this range.
 The following theorem is one of the main results in \cite{KostM}:
\begin{thm}[\cite{KostM}]\label{thmko} Let $k\ge 5$ and $n=n(k)$ be sufficiently large. If $\ff\subset {[n]\choose k}$ is intersecting and $|\ff|> |\mathcal J_3|$ then  $\ff\subset \mathcal E_l$ for $l\in \{0,\ldots k-1, n-k\}$.
\end{thm}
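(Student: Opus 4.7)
\emph{Setup.} The plan is to fix $1$ as an element of maximum degree in $\ff$ and set $\aaa:=\ff(1)\subset {[2,n]\choose k-1}$ and $\bb:=\ff(\bar 1)\subset {[2,n]\choose k}$, so that $|\ff|=|\aaa|+|\bb|$ and $|\bb|=\gamma(\ff)$. The target $\mathcal E_l$ has $\mathcal E_l(\bar 1)$ consisting of $l$ sets in ${[2,n]\choose k}$ sharing a common $(k-1)$-subset (a singleton for $l=1$, empty for $l=0$), together with all $(k-1)$-subsets of $[2,n]$ (attached to $1$) that meet each such set. Thus the task reduces to showing that $\bb$ has exactly this structure.

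\emph{Step 1 (diversity is small).} The hypothesis $|\ff|>|\mathcal J_3|={n-1\choose k-1}-2{n-k-1\choose k-1}+{n-k-3\choose k-1}+2$, combined with Corollary~\ref{corhm} applied for $u=3$, forces $\gamma(\ff)\le {n-4\choose k-3}$. Indeed, for $n$ sufficiently large a direct asymptotic comparison shows that the corollary's upper bound ${n-1\choose k-1}+{n-4\choose k-3}-{n-4\choose k-1}-{n-k-2\choose k-2}+1$ is strictly below $|\mathcal J_3|$, so any $\ff$ violating the diversity bound would contradict the hypothesis. Hence $|\bb|=O(n^{k-3})$, polynomially smaller than $|\aaa|$.

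\emph{Step 2 (structural claim, main step).} Let $\mathcal S:=\bigcap_{B\in\bb}B$. The key claim is that $\bb=\emptyset$ or $|\mathcal S|\ge k-1$. Suppose for contradiction $\bb\ne\emptyset$ and $|\mathcal S|\le k-2$. Each $F\in\aaa$ must meet every $B\in\bb$. For $F$ meeting $\mathcal S$ the constraint is trivial; for $F\subset[2,n]\setminus\mathcal S$, $F$ must meet each ``tail'' $B\setminus\mathcal S$, which has size $\ge 2$. The crux is a comparison with the ``ideal'' case $|\mathcal S|=k-1$ (where the tails are singletons): picking any two $B_1,B_2\in\bb$ whose tails $B_i\setminus\mathcal S$ differ, an inclusion--exclusion over ${[2,n]\setminus\mathcal S\choose k-1}$ gives a shortfall in the admissible $F$ count of at least ${n-k-2\choose k-2}+O(n^{k-3})$ relative to the $|\mathcal S|=k-1$ case. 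Since $|\bb|=O(n^{k-3})$ from Step~1, this shortfall cannot be compensated by the additional sets in $\bb$, and we obtain $|\aaa|+|\bb|<|\mathcal J_3|$, a contradiction.

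\emph{Step 3 (conclusion).} Hence $\bb=\emptyset$ (so $\ff\subset\mathcal E_0$), or $\bb=\{\mathcal S\}$ with $|\mathcal S|=k$ (so $\ff\subset\mathcal E_1$), or $|\mathcal S|=k-1$ and $\bb=\{\mathcal S\cup\{x_B\}:B\in\bb\}$ for some $x_B\in[2,n]\setminus\mathcal S$, giving $\ff\subset\mathcal E_l$ for $l:=|\bb|\le n-k$. If $l\le k-1$ we are done directly; if $k\le l\le n-k$ we invoke the inclusion $\mathcal E_l\subset\mathcal E_{n-k}$ noted in the excerpt's footnote, so $\ff\subset\mathcal E_{n-k}$. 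The main obstacle is the quantitative inclusion--exclusion in Step~2, particularly in the boundary regime $|\mathcal S|=k-2$, where the tails of $B_1,B_2$ overlap substantially and the shortfall estimate must be extracted carefully; matching it against the (already polynomially small) $|\bb|$ is the delicate point of the whole proof.
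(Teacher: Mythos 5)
Your overall strategy --- first force $\gamma(\ff)\le{n-4\choose k-3}$ from $|\ff|>|\mathcal J_3|$, then show that $\bigcap_{B\in\ff(\bar 1)}B$ must have size at least $k-1$ --- is the same reduction the paper makes: it deduces the statement from the second part of Theorem~\ref{thmclass2} with $t=k-2$ (this is where $k\ge5$ enters). Your Step 1 is fine in substance, though Theorem~\ref{thm1} with $u=3$ is the cleaner reference, since Corollary~\ref{corhm} with $u=3$ only covers part of the range of $\gamma$. Step 3 is routine. The problem is Step 2, which is where essentially all of the work lies.

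The quantitative claim in Step 2 is false. Take $\mathcal S$ of size $k-2$ and $\bb=\{\mathcal S\cup\{x,y\},\,\mathcal S\cup\{x,z\},\,\mathcal S\cup\{y,z\}\}$: any two members of $\bb$ intersect in $k-1$ elements, so no pair of ``differing tails'' constrains $\aaa$ more than a pair with $|\mathcal S|=k-1$ would. Using all three sets, inclusion--exclusion gives
$|\aaa|\le{n-1\choose k-1}-3{n-k-1\choose k-1}+2{n-k-2\choose k-1}=|\mathcal J_3(1)|-{n-k-3\choose k-3}$,
a shortfall of order $n^{k-3}$, not ${n-k-2\choose k-2}=\Theta(n^{k-2})$ as you claim. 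Since Step 1 only gives $|\bb|\le{n-4\choose k-3}=\Theta(n^{k-3})$, the shortfall and the admissible excess of $|\bb|$ have the same order, and indeed ${n-4\choose k-3}>{n-k-3\choose k-3}$, so ``shortfall dominates $|\bb|$'' does not close the argument. The underlying difficulty is that an upper bound on $|\aaa|$ extracted from two or three fixed members of $\bb$ does not improve as $|\bb|$ grows, whereas $|\bb|$ may grow to $\Theta(n^{k-3})$; one must show that each additional set of $\bb$ costs $\aaa$ strictly more than one set. This is exactly what the paper's proof supplies: shifting plus a sequence of bipartite exchange operations reduce to $\ff(\bar 1)=\mm$ for a subfamily $\mm$ minimal w.r.t.\ common intersection, and Lemma~\ref{lemmin} (a layered count combined with Bollob\'as's set-pair inequality, giving $f(z-1)-f(z)>1$) then compares the competing minimal configurations. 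None of that machinery is replaced by your inclusion--exclusion sketch, so Step 2 is a genuine gap rather than a missing computation.
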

We note that it is easy to verify that $|\mathcal J_3|<|\mathcal E_{n-k}|$, e.g., for $n>4k$.
The authors of \cite{KostM} we using the delta-systems method of Frankl.\footnote{The goal of their paper, was, in a sense, to draw the attention of the researchers to this method.} Actually, Theorem~\ref{thmko} can be deduced from the results of Frankl \cite{F16} with little extra effort.

Many results in extremal set theory are much easier to obtain once one assumes that $n$ is sufficiently large in comparison to $k$. (The possibility to apply the delta-method is one of the reasons.) In particular, the bound on $n$ in Theorem~\ref{thmko} is doubly exponential in $k$.
In this paper, we deduce Theorem~\ref{thmko} from a much more general result, which, additionally, holds without any restriction on $n$.
%To prove it, we again use a variation of the bipartite switching trick as the main tool.

 A family is called \underline{minimal} with respect to some property, if none of its proper subfamilies possesses the property. For shorthand, we say that $\mm$ is \underline{minimal w.r.t. common intersection} if, for any $M_l\in \mm$, we have $|\bigcap_{M\in \mm\setminus\{M_l\}}M|>|\bigcap_{M\in \mm}M|$.
The following theorem is one of the main results of this paper.

\begin{thm}\label{thmclass2} Assume that $n>2k\ge 8$. Consider an intersecting family $\ff\subset{[n]\choose k}$ with  $\Delta(\ff) = d_1(\ff)$.
%Assume that $|\bigcap_{F\in \ff(\bar 1)}F|=r$, where $r\in [0,k-1]$. Choose any $k-1\ge t\ge r$ and any
Take a  subfamily $\mathcal M\subset \ff(\bar 1)$, which is minimal w.r.t. common intersection and such that $|\bigcap_{M\in \mm}M|= t$. Take the (unique) maximal intersecting family $\ff'$, such that $\ff'(\bar 1) = \mm$. If $t\ge 3$
%or $\gamma(\ff)\le {n-4\choose k-3}$
then we have
\begin{equation}\label{eqclass1} |\ff|\le |\ff'|,
\end{equation}
and, for $k\ge 5$ equality is possible if and only if $\ff$ is isomorphic to $\ff'$.

Moreover, if $\ff$ is as above and  $|\bigcap_{F\in \ff(\bar 1)}F|\le t$ for some $t\ge 3$, then
%and there are two sets $A,B\in \ff(\bar 1)$ such that $|A\cap B|= k-s$ for some $s\in [k-3]$, then
\begin{equation}\label{eqclass2} |\ff|\le |\mathcal J_{k-t+1}|,
\end{equation}
and, for $k\ge 5$, equality is possible only if $\ff$ is isomorphic to $\mathcal J_{k-t+1}$.
\end{thm}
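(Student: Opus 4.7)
The plan is as follows. Decompose $\ff=\{\{1\}\cup A:A\in\aaa\}\sqcup\bb$ where $\aaa:=\ff(1)\subset\binom{[2,n]}{k-1}$ and $\bb:=\ff(\bar 1)\subset\binom{[2,n]}{k}$, so $|\ff|=|\aaa|+|\bb|$. The hypotheses translate to: $\bb$ is intersecting, $(\aaa,\bb)$ is cross-intersecting (every $A\in\aaa$ meets every $B\in\bb$), and since $\Delta(\ff)=d_1(\ff)=|\aaa|$, we have $|\{A\in\aaa:x\in A\}|+|\{B\in\bb:x\in B\}|\le|\aaa|$ for every $x\in[2,n]$. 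Decomposing $\ff'$ analogously, $\ff'(\bar 1)=\mm$ and $\ff'(1)=\aaa':=\{F\in\binom{[2,n]}{k-1}:F\cap M\ne\emptyset\ \forall M\in\mm\}$. Since every $A\in\aaa$ meets every $M\in\mm\subset\bb$, we have $\aaa\subseteq\aaa'$, so (\ref{eqclass1}) is equivalent to
\[
|\bb|-|\mm|\le|\aaa'|-|\aaa|.\qquad(\dagger)
\]

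My approach to $(\dagger)$ is to first normalize $(\aaa,\bb)$ via the bipartite switching trick of \cite{KZ} so that both are shifted with respect to a linear order on $[2,n]$; this preserves $|\aaa|$, $|\bb|$, the intersecting and cross-intersecting structure, and the max-degree condition. On the shifted side I would construct an injection $\Phi:\bb\setminus\mm\to\aaa'\setminus\aaa$ by sending each $B\in\bb\setminus\mm$ to a carefully chosen $(k-1)$-subset $F_B\subset B$. The existence of $F_B$ uses minimality of $\mm$ (which provides for each $M_l\in\mm$ a witness element $x_l\notin M_l$ contained in all other $M_i$) together with $|\mm|\le k-t+1<k$, which guarantees some $b\in B$ with $F_B:=B\setminus\{b\}$ still meeting every $M\in\mm$, giving $F_B\in\aaa'$. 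Showing $F_B\notin\aaa$ and injectivity of $\Phi$ is where the max-degree condition $d_1=\Delta$ enters: too many $B\in\bb$ mapping to $F_B$'s already in $\aaa$ would inflate the degree of some $x\ne 1$ past $|\aaa|$. This is essentially the cross-intersecting strengthening of Theorem~\ref{thm1} promised in Section~\ref{sec3}.

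For equality in (\ref{eqclass1}) I would trace the injection: equality forces $\aaa=\aaa'$ and $\Phi$ to be a bijection, which in turn forces $\bb=\mm$ and hence $\ff\cong\ff'$, with the hypothesis $k\ge 5$ ruling out small-$k$ sporadic coincidences. For the second part, writing $t':=|\bigcap_{F\in\ff(\bar 1)}F|\le t$, I pick a minimal $\mm\subset\ff(\bar 1)$ with $|\bigcap\mm|=t'$; when $t'\ge 3$ the first part gives $|\ff|\le|\ff'|$, and it remains to show $|\ff'|\le|\mathcal J_{k-t+1}|$. Here I compare: $|\ff'|=|\mm|+|\aaa'|$ is maximized when $|\mm|=2$ and $|\bigcap\mm|=t$, because (a) for $|\mm|=2$ the size $|\aaa'|$ is monotone increasing in $|\bigcap\mm|$ (a direct inclusion--exclusion check), and (b) enlarging $|\mm|$ beyond $2$ while preserving minimality removes at least $\binom{n-k-1}{k-2}$ sets from $\aaa'$ per added $M$, easily outweighing the $+1$ gain in $|\mm|$ when $n>2k$. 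This extremal configuration is exactly $\mathcal J_{k-t+1}$, yielding the bound and its equality case.

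The principal obstacle is constructing the injection $\Phi$: combining minimality of $\mm$ (to get $F_B\in\aaa'$) with the subtle use of the max-degree condition (to exclude $F_B\in\aaa$) requires the cross-intersecting Kruskal--Katona-style strengthening of Theorem~\ref{thm1} promised in Section~\ref{sec3}, rather than the plain theorem itself. The equality analysis and the derivation of the second part then follow by monotonicity and careful comparison.
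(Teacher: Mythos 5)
Your reduction to the inequality $|\bb|-|\mm|\le|\aaa'|-|\aaa|$ is fine, but the heart of your argument --- the injection $\Phi:\bb\setminus\mm\to\aaa'\setminus\aaa$ --- is not actually constructed. The only part you justify is $F_B\in\aaa'$ (which does follow from minimality, since $|\mm|\le k-t+1<k$ leaves a non-critical element $b\in B$). The two properties that carry all the weight, namely $F_B\notin\aaa$ and injectivity of $\Phi$, are merely asserted with a vague appeal to $\Delta(\ff)=d_1(\ff)$. Note that the naive map $B\mapsto B\setminus\{b\}$ is badly many-to-one (up to $n-k$ sets $B$ share a given $(k-1)$-subset), and the maximum-degree condition is a single global inequality on element degrees; it does not prevent individual $F_B$ from lying in $\aaa$, nor does it yield a system of distinct representatives for the sets $\{B\setminus\{b\}: b \text{ admissible}\}$. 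There is also a problem with your normalization step: making $\aaa,\bb$ fully shifted is not compatible with keeping the specific subfamily $\mm$ (and hence the comparison family $\ff'$) fixed, and shifts with $i,j\ge 2$ need not preserve $\Delta(\ff)=d_1(\ff)$. The paper's proof does not use any injection: it takes a minimal counterexample of smallest diversity, first forces $\gamma(\ff)\le\binom{n-4}{k-3}$, then uses only shifts $S_{ij}$ with $i\in[2,t+1]$ (which leave $\mm$ intact), a Kruskal--Katona shadow estimate to control $|\ff([2,i-1],[i])|$, and a sequence of bipartite switchings (the graphs $G_i$, $G_i'$, $G(t',I)$) combined with the cross-intersecting bounds of Corollary~\ref{corkz}, to successively force every set of $\ff(\bar 1)\setminus\mm$ to contain larger and larger fixed sets until $\ff(\bar 1)=\mm$. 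None of this machinery is replaced by anything in your sketch, so \eqref{eqclass1} remains unproved.

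The second part has two further gaps. First, your comparison "the maximal $\ff'$ over all admissible $\mm$ is $\mathcal J_{k-t+1}$" is exactly the content of the paper's Lemma~\ref{lemmin}, whose proof is a genuinely nontrivial count (the bound $f(z)$ on the largest family cross-intersecting with a minimal cover, the Bollob\'as set-pair inequality to bound $|\mm|\le s+1$, and the estimate $f(z-1)-f(z)\ge\binom{m-s-2}{k-3}>1$); your claim that "each added $M$ removes at least $\binom{n-k-1}{k-2}$ sets from $\aaa'$" is unproven and does not address the comparison between non-nested minimal configurations of the same size, which is where the work lies. Second, you apply the first part to a minimal subfamily with intersection $t':=|\bigcap_{F\in\ff(\bar 1)}F|$ and restrict to $t'\ge 3$; but the hypothesis only gives $t'\le t$, and when $t'\le 2$ your argument says nothing. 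The paper handles this by showing, via $(i,j)$-shifts and an explicit construction inside a shifted family, that one can always produce a minimal subfamily whose common intersection has size exactly $t$, and only then invokes the first part; this step is missing from your proposal.
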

This theorem generalizes Theorems~\ref{thmhk} and~\ref{thmko} and gives a reasonable classification of {\it all} large intersecting families.
We also note that we cannot in general replace the condition on $t$
by $t\ge 2$.
Indeed, one can see that the family $\mathcal H_2$ (cf. \eqref{eqhu}) is much larger than $\mathcal J_{k-1}$ for large $n$.\footnote{The size of $\mathcal J_{k-1}$ can be bounded by ${n-2\choose k-2}+{n-3\choose k-2}+2+(k-2)\big({n-4\choose k-2}-{n-k-2\choose k-2}\big)<{n-2\choose k-2}+2{n-3\choose k-2}$ for $n>2k^2$, say.}

\begin{cor}\label{cormk} The statement of Theorem~\ref{thmko} is valid for any $n>2k\ge 10.$
\end{cor}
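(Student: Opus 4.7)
The plan is to derive Corollary~\ref{cormk} directly from Theorem~\ref{thmclass2}, which holds for all $n > 2k \geq 8$ and therefore covers the range $n > 2k \geq 10$ that the corollary concerns. Let $\ff \subset \binom{[n]}{k}$ be intersecting with $|\ff| > |\mathcal{J}_3|$. Relabelling the ground set if necessary, I may assume $\Delta(\ff) = d_1(\ff)$, which places us in the hypothesis of Theorem~\ref{thmclass2}. If $\ff(\bar{1}) = \emptyset$ then $\ff$ is a sub-star and $\ff \subset \mathcal{E}_0$. So assume $\ff(\bar{1}) \neq \emptyset$ and set $s := |\bigcap_{F \in \ff(\bar{1})} F|$.

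The key move is to invoke the second part of Theorem~\ref{thmclass2} with parameter $t = k - 2$. This is legitimate because $2k \geq 10$ forces $k \geq 5$, hence $t \geq 3$. If $s \leq k - 2$, that part of the theorem yields $|\ff| \leq |\mathcal{J}_{k-t+1}| = |\mathcal{J}_3|$, contradicting our hypothesis. Therefore $s \geq k - 1$. Since every member of $\ff(\bar{1})$ is a $k$-set containing a common set of size $\geq k - 1$, each such $k$-set must have the form $T \cup \{x\}$ for a single fixed $(k-1)$-element set $T \subset [2,n]$ and some $x \in [n] \setminus (T \cup \{1\})$; in particular $\ff(\bar{1})$ is a subfamily of a ``sunflower'' of at most $n - k$ available sets.

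Setting $l := |\ff(\bar{1})|$, the maximality built into the definition of $\mathcal{E}_l$ gives $\ff \subset \mathcal{E}_l$ (up to isomorphism), since $\ff$ is an intersecting family whose non-star part has the structure required of $\mathcal{E}_l(\bar{1})$. If $l \in \{0,1,\ldots,k-1\}$ or $l = n-k$, this is exactly the conclusion we want. In the remaining range $k - 1 < l < n - k$ the excerpt records that $\mathcal{E}_l \subset \mathcal{E}_{n-k}$, so we upgrade to $\ff \subset \mathcal{E}_{n-k}$.

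I do not anticipate a genuine obstacle: the entire argument is a two-case dichotomy on the size $s$ of the common intersection of $\ff(\bar{1})$, with Theorem~\ref{thmclass2} doing all the heavy lifting in the ``small intersection'' case $s \leq k-2$, and a trivial sunflower observation handling the ``large intersection'' case $s \geq k-1$. The only delicate point worth checking is that the numerical bound $|\mathcal{J}_{k-t+1}|$ specialises to $|\mathcal{J}_3|$ precisely when $t = k - 2$, which is what makes the threshold $|\ff| > |\mathcal{J}_3|$ in the statement of Theorem~\ref{thmko} the exact one needed to rule out the small-intersection alternative.
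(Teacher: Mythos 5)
Your proof is correct and follows essentially the same route as the paper: a dichotomy on $s=|\bigcap_{F\in\ff(\bar 1)}F|$, ruling out $s\le k-2$ via the second part of Theorem~\ref{thmclass2} with $t=k-2$ (where $k\ge 5$ guarantees $t\ge 3$), and reading off the $\mathcal E_l$ structure when $s\ge k-1$. The only difference is that you spell out the sunflower structure and the containment $\mathcal E_l\subset\mathcal E_{n-k}$ for $k-1<l<n-k$, which the paper leaves implicit.
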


\begin{proof}[Proof of Corollary~\ref{cormk}]
Fix any $\ff$ as in Theorem~\ref{cormk} and assume that $\Delta(\ff) = d_1(\ff)$. First assume that $|\bigcap_{F\in\ff(\bar 1)}|\le k-2$. Since $k\ge 5$, we are in position to apply the second part of Theorem~\ref{thmclass2} to $\ff$, and get a contradiction with $|\ff|>|\mathcal J_3|$.
Therefore, $|\bigcap_{F\in \ff(\bar 1)}F|=k-1$ and thus $\ff(\bar 1)$ is isomorphic to $\mathcal E_l(\bar 1)$ for $l = |\mathcal F(\bar 1)|$, which concludes the proof.
\end{proof}

The proof of Theorem~\ref{thmclass2} is given in Section~\ref{sec41}. The main tool of the proof of Theorem~\ref{thmclass2} is again the aforementioned bipartite switching trick. In this paper, we exploit it to much greater extent than in  \cite{FK1} and \cite{KZ}. One key observation that allows us to prove Theorem~\ref{thmclass2} is that this bipartite switching is possible even in situations when we know practically nothing about the structure of the family.\\

In the remaining part of the introduction, we present two results that extend some classic results of Frankl, Erd\H os Rothshild and Semeredi, and Furedi  proven for $n>n_0(k)$ (with double-exponential dependency on $k$, coming from the aforementioned delta-method) to the range $n>Ck$, where $C$ is an absolute constant. Apart from the bipartite switching trick, we use as a main tool the junta approximation theorem due to Dinur and Friedgut \cite{DF}.  The approach is resemblant of the recent paper \cite{Kup21} of the author, but here we apply it to a much wider class of problems. The rough framework of combining junta method to get approximate structure with combinatorial arguments for finer structure was used in an excellent recent paper \cite{KL}. One novel aspect in the use of junta method in the first problem below is that the actual extremal configuration is quite far from being a junta (i.e., the family is not defined by the intersection with a constant-size subset of the ground set, see precise definition in Section~\ref{sec5})! This, of course, poses additional complications.

For a family $\ff$, let $\tau(\ff)$ denote the \underline{covering number} of $\ff$, that is, the minimum size of set $S$ that intersects all sets in $\ff$. Each such $S$ we call a \underline{hitting set}.

Intersecting families of $k$-sets with fixed covering number were studied in several classical works. The Erd\H os--Ko--Rado theorem shows that the largest intersecting family of $k$-element sets has covering number $1$. The aforementioned result of Hilton and Milner \cite{HM} determined the largest intersecting family with covering number $2$. It is clear that any $k$-uniform intersecting family $\ff$ satisfies $\tau(\ff)\le k$: indeed, any set of $\ff$ is a hitting set for $\ff$. In a seminal paper \cite{EL}, Erd\H os and Lov\'asz proved that an intersecting family $\ff\subset {[n]\choose k}$ with $\tau(\ff)=k$ has size at most $k^k$ (note that it is independent of $n$!) and provided a lower bound of size roughly $(k/e)^k$. Later, both lower \cite{FOT2} and upper \cite{Che, AR, Fra21} bounds were improved.

Let us define the following important family.
\begin{equation}\label{deft2}
\mathcal T_2(k):=\big\{[k]\big\}\cup \big\{\{1\}\cup [k+1,2k-1]\big\}\cup \big\{\{2\}\cup [k+1,2k-1]\big\}.\end{equation}
It is easy to see that $\mathcal T_2(k)$ is intersecting, moreover, $\tau(\mathcal T_2(k))=2$.% and there are $k^2-k+1$ hitting sets of size $2$ for $\mathcal T_2(k)$.

In \cite{F16}, Frankl studied the following question: what is the size $c(n,k,t)$ of the largest intersecting family $\ff\subset {[n]\choose k}$ with $\tau(\ff)\ge t$? Define $\mathcal C_3(n,k)\subset {[n]\choose k}$ to be the maximal intersecting family with $\mathcal C_3(n,k)(\bar 1)$ isomorphic to $\mathcal T_2(k)$. It is easy to see that $\tau(\mathcal C_3(n,k))=3$. Frankl managed to prove the following theorem.

\begin{thm}[\cite{F16}]\label{thmtau3} Let $k\ge 3$ and $n\ge n_0(k)$. Then $c(n,k,3)= |\mathcal C_3(n,k)|$. Moreover, for $k\ge 4$ the equality holds only for families isomorphic to $\mathcal C_3(n,k)$.
\end{thm}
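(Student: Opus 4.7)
Without loss of generality assume $\Delta(\ff)=d_1(\ff)$ and set $\mathcal G:=\ff(\bar 1)$. Let $\hh(\mathcal G)\subseteq \binom{[2,n]}{k-1}$ denote the collection of $(k-1)$-subsets of $[2,n]$ meeting every $G\in\mathcal G$; each set in $\ff(1)$ lies in $\hh(\mathcal G)$, so
\[
|\ff|\le|\mathcal G|+|\hh(\mathcal G)|,
\]
with equality iff $\ff$ is the maximal intersecting family with $\ff(\bar 1)=\mathcal G$. The hypothesis $\tau(\ff)\ge 3$ forces $\mathcal G$ intersecting with $\tau(\mathcal G)\ge 2$, and since no two-element intersecting family has $\tau\ge 2$, necessarily $|\mathcal G|\ge 3$. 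Because $\mathcal C_3(n,k)$ is by definition the maximal intersecting family whose $(\bar 1)$-slice is isomorphic to $\mathcal T_2(k)$, the theorem reduces to proving
\[
|\mathcal G|+|\hh(\mathcal G)|\le 3+|\hh(\mathcal T_2(k))|,
\]
with equality (for $k\ge 4$) iff $\mathcal G\cong\mathcal T_2(k)$.

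\textbf{Reduction to a minimal critical subfamily.} I then iteratively discard any $G\in\mathcal G$ for which $\tau(\mathcal G\setminus\{G\})\ge 2$ still holds. Each removal enlarges $\hh$ by at least the number of $(k-1)$-subsets of $[2,n]\setminus G$ meeting every remaining member of $\mathcal G$; since no remaining $G'$ can lie inside $G$, this count is bounded below by a positive polynomial of degree $k-2$ in $n$, hence far exceeds $1$ for $n\ge n_0(k)$, so $|\mathcal G|+|\hh(\mathcal G)|$ strictly grows with each removal. Iterating, I reach a minimal critical intersecting subfamily $\mm\subseteq\mathcal G$ of size $s:=|\mm|$. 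The classical description of such $\mm$: there are distinct elements $x_1,\dots,x_s\in[2,n]$ with $x_l\in M_j$ iff $j\ne l$, forcing $3\le s\le k+1$ (since $M_l\supseteq\{x_j:j\ne l\}$ has $k$ elements).

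\textbf{Identifying the optimum.} Inclusion--exclusion gives
\[
|\hh(\mm)|=\sum_{S\subseteq[s]}(-1)^{|S|}\binom{n-1-\bigl|\bigcup_{l\in S}M_l\bigr|}{k-1}.
\]
When $s\ge 4$ every pairwise union satisfies $|M_i\cup M_j|\le 2k-(s-2)$ (the $s-2$ common witnesses lie in $M_i\cap M_j$), and expanding $|\hh(\mathcal T_2(k))|-|\hh(\mm)|$ in powers of $1/n$ shows that the $O(1)$ and $O(1/n)$ contributions cancel, while the $O(1/n^2)$ coefficient is strictly positive for $k\ge 4$; for $n\ge n_0(k)$ this beats the extra $s-3\ge 1$, so $s+|\hh(\mm)|<3+|\hh(\mathcal T_2(k))|$. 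For $s=3$, parametrise by the pairwise intersection sizes $a_{12},a_{13},a_{23}\ge 1$, pairwise disjoint in the ground set (since $M_1\cap M_2\cap M_3=\emptyset$), with $a_{ij}+a_{ik}\le k$ for each $i$; the count simplifies to
\[
|\hh(\mm)|=\binom{n-1}{k-1}-3\binom{n-k-1}{k-1}+\sum_{i<j}\binom{n-2k-1+a_{ij}}{k-1}-\binom{n-3k-1+a_{12}+a_{13}+a_{23}}{k-1}.
\]
The crux of the proof is to show that this expression, subject to the stated constraints, is maximized exactly at $(a_{12},a_{13},a_{23})=(1,1,k-1)$ (up to permutation), which is the isomorphism type of $\mathcal T_2(k)$. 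Using $\binom{m+1}{k-1}-\binom{m}{k-1}=\binom{m}{k-2}$, the discrete partial derivative in $a_{23}$ evaluates to $\binom{n-2k-1+a_{23}}{k-2}-\binom{n-3k-1+a_{12}+a_{13}+a_{23}}{k-2}$, which is strictly positive whenever $a_{12}+a_{13}<k$; iterating this and its symmetric counterparts pushes the maximum to the boundary $a_{23}=k-1$, $a_{12}=a_{13}=1$. Naively one expects a symmetric choice $a_{ij}\equiv 1$ to dominate, but concentrating one pairwise intersection at size $k-1$ creates a large positive contribution $\binom{n-k-2}{k-1}$ that outweighs several moderate terms $\binom{n-2k}{k-1}$ even after the accompanying adjustment to the triple-union term -- this is the main obstacle. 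Tracking strict inequality through the chain of reductions delivers the uniqueness statement for $k\ge 4$.
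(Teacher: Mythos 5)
Your opening reduction is where the proof breaks. After fixing $\Delta(\ff)=d_1(\ff)$ and setting $\mathcal G=\ff(\bar 1)$, you claim the theorem reduces to showing $|\mathcal G|+|\hh(\mathcal G)|\le 3+|\hh(\mathcal T_2(k))|$ for \emph{every} intersecting $\mathcal G\subset\binom{[2,n]}{k}$ with $\tau(\mathcal G)\ge 2$. That inequality is false. Take $\mathcal G$ to be a Hilton--Milner family on $[2,n]$ (all $k$-sets containing $2$ and meeting $[3,k+2]$, together with $[3,k+2]$): it is intersecting with $\tau(\mathcal G)=2$, and $|\mathcal G|=\binom{n-2}{k-1}-\binom{n-k-2}{k-1}+1=\Theta(n^{k-2})$, whereas $3+|\hh(\mathcal T_2(k))|=|\mathcal C_3(n,k)|=\Theta(n^{k-3})$. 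The constraint you dropped is essential: $\tau(\ff)\ge 3$ together with near-extremality forces $\gamma(\ff)=|\ff(\bar 1)|$ to be \emph{small} (this is what Frankl extracts from the delta-system method, and what the paper extracts from the Dinur--Friedgut junta theorem via the bound $|\ff\setminus\mathcal J|\le\binom{n-5}{k-4}$), and without some such a priori bound on $|\mathcal G|$ no optimization over minimal covers can recover the theorem. The same issue sinks your discard step: removing $G$ from $\mathcal G$ gains exactly the number of $(k-1)$-sets hitting all of $\mathcal G\setminus\{G\}$ and disjoint from $G$, and this can be $0$ (in the Hilton--Milner example with $2\in G$, every $(k-1)$-set hitting $\mathcal G\setminus\{G\}$ must contain $2\in G$), so the sum can strictly decrease at every removal; your lower bound by ``a positive polynomial of degree $k-2$'' has no justification, since the traces $G'\setminus G$ may form a family of covering number exceeding $k-1$. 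This is why the paper does not discard sets but instead performs bipartite switchings (moving sets from $\ff(\bar 1)$ into $\ff(1)$ via the graphs $G_i'$ and $G(t',I)$ and Corollary~\ref{corkz}), which provably do not decrease $|\ff|$ \emph{once the diversity bound is in place}.

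Two further points, secondary to the above. First, your treatment of minimal families with $s\ge 4$ is not correct as stated: the dominant term of $|\hh(\mm)|$ at order $n^{k-3}$ is governed by the number of $2$-element hitting sets of $\mm$ (for $s=4$ the six pairs $\{x_i,x_j\}$ of witnesses already contribute at this order), so the comparison with $\mathcal T_2(k)$ (which has $k^2-k+1$ such hitting sets) happens at order $n^{k-3}$, not at your ``$O(1/n^2)$ coefficient''; one genuinely has to bound the number of small hitting sets, which is what the paper's counting in Lemma~\ref{lemmin} (via the Bollob\'as set-pair inequality giving $|\mm|\le s+1$ and the telescoping count \eqref{eqfz}) accomplishes. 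Second, even for $s=3$ your maximization over the polytope $\{a_{ij}\ge 1,\ a_{ij}+a_{ik}\le k\}$ is only sketched; the paper's Lemma~\ref{lemmin} settles exactly this comparison (including uniqueness for $s\ge k$) by a direct count rather than inclusion--exclusion, and you would need to carry that out in full to get the uniqueness claim for $k\ge 4$.
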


As in the case of Theorem~\ref{thmko}, the bound on $n$ is doubly exponential in $k$. One of the main results of this paper is the proof of Theorem~\ref{thmtau3} under much milder restrictions.

\begin{thm}[\cite{F16}]\label{thmtau3} The conclusion of Theorem~\ref{thmtau3} holds for any $n>Ck$, where $C$ is an absolute constant, independent of $k$.
\end{thm}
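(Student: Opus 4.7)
The plan is to combine the Dinur--Friedgut junta approximation theorem~\cite{DF} with the bipartite switching trick from~\cite{KZ}, following the framework of~\cite{Kup21}. First I would reduce to the regime where $|\ff|$ is close to the Erd\H os--Ko--Rado bound $\binom{n-1}{k-1}$, since $|\mathcal{C}_3(n,k)|$ equals $\binom{n-1}{k-1}$ minus a second-order correction of order $\binom{n-k-1}{k-1}$. In this regime Dinur--Friedgut supplies a constant-size $J \subset [n]$ and an intersecting family $\G \subset 2^J$ such that $|\ff \triangle \ff[\G]|$ is negligible compared to $\binom{n-1}{k-1}$, where $\ff[\G] = \{F \in \binom{[n]}{k} : F \cap J \in \G\}$. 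Among intersecting juntas $\G$, only stars yield $|\ff[\G]|$ of order $\binom{n-1}{k-1}$, so $\G$ must be a star; after relabeling, $\G = \{S \subset J : 1 \in S\}$. Consequently $\ff(\bar 1)$ is small and almost all sets through $1$ lie in $\ff$.

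Next, $\tau(\ff) \ge 3$ constrains $\mm := \ff(\bar 1)$. Since no pair $\{1, y\}$ is a hitting set, every element $y \ne 1$ is avoided by some $M \in \mm$, i.e.\ $\bigcap_{M \in \mm} M = \emptyset$, while $\mm$ remains intersecting. Apply the bipartite switching trick: each $A \in \mm$ forbids from $\ff$ every $k$-set through $1$ disjoint from $A$, yielding
\[ |\ff| \le |\mm| + \binom{n-1}{k-1} - \left|\bigcup_{A \in \mm}\{F \in \tbinom{[n]}{k} : 1 \in F,\ F \cap A = \emptyset\}\right|. \]
Expanding the union via inclusion--exclusion in terms of $|A_{i_1} \cup \cdots \cup A_{i_r}|$ and optimizing over $\mm$ subject to the intersecting and empty-common-intersection constraints, a direct analysis shows that the right-hand side is maximized precisely when $\mm$ is isomorphic to $\mathcal{T}_2(k)$ of~\eqref{deft2}: among intersecting triples with no common element, this configuration (one pair sharing $k-1$ elements, the third set meeting each of the other two in a single point) minimizes the union sizes optimally for the inclusion--exclusion count. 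Adding any further set to $\mm$ costs strictly more than it contributes, and any other triple gives a strictly smaller count. This yields $|\ff| \le |\mathcal{C}_3(n,k)|$, with equality for $k \ge 4$ only when $\ff$ is isomorphic to $\mathcal{C}_3(n,k)$, the latter being by definition the unique maximal intersecting extension of $\mathcal{T}_2(k)$.

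The main obstacle, as stressed in the introduction, is that $\mathcal{C}_3(n,k)$ is itself \emph{not} a junta: the Dinur--Friedgut step locates only the star approximation and is blind to the three non-star sets responsible for $\tau(\ff) = 3$. Upgrading the approximate structure to the exact bound requires the bipartite switching analysis to be executed sharply and non-asymptotically, beating all competitor triples of pairwise-intersecting $k$-sets with empty triple intersection by a precise combinatorial margin. Further care is needed to ensure this margin survives in the regime $n > Ck$ with $C$ an absolute constant: the Dinur--Friedgut error term must be absorbed within the second-order gap between the star at $1$ and $\mathcal{C}_3(n,k)$, and this compatibility is what ultimately pins down $C$.
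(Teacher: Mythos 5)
Your first stage (Dinur--Friedgut, then ruling out non-dictatorship juntas by a degree/size comparison so that after relabelling $\ff$ is approximated by the star at $1$) matches the paper. The gap is in the second stage. You set $\mm:=\ff(\bar 1)$, write the bound $|\ff|\le|\mm|+\binom{n-1}{k-1}-\bigl|\bigcup_{A\in\mm}\{F:1\in F,\ F\cap A=\emptyset\}\bigr|$, and then assert that ``a direct analysis'' of the inclusion--exclusion shows the right-hand side is uniquely maximized at $\mm\cong\mathcal T_2(k)$ and that ``adding any further set to $\mm$ costs strictly more than it contributes.'' These two assertions are precisely the hard content of the theorem, and neither is accessible by the route you describe. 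First, $\mm$ here is not a bounded-size object: the junta step only gives $|\ff(\bar 1)|\lesssim\binom{n-5}{k-4}$, so you are proposing to optimize an inclusion--exclusion expression over all intersecting subfamilies of $\binom{[2,n]}{k}$ with covering number at least $2$ and potentially huge cardinality; there is no tractable closed form to optimize. The paper instead first extracts a subfamily $\mm\subset\ff(\bar 1)$ that is \emph{minimal} with $\tau(\mm)=2$ (hence $|\mm|\le k+1$ by Bollob\'as's set-pair inequality) and then runs a sequence of bipartite switching arguments (the graphs $G_i'$ and $G(t',I)$ of Sections~\ref{sec41} and~\ref{sec51}) to show that every other set of $\ff(\bar 1)$ can be exchanged away without decreasing $|\ff|$ — i.e., one may assume $\ff(\bar 1)=\mm$. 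This reduction is delicate: the switches only work because the surviving sets are forced to contain a growing fixed prefix $[2,t']$ plus an extra element $i'$ lying in two sets of $\mm$, and the paper has to treat separately the case where no such $i'$ exists (via an $(i,j)$-shift) and the case $\mm\cong\mathcal T_2(k)$ (where $t'\ge k+2$ kills $\mathcal U$ outright). Your claim that each added set has positive marginal cost is exactly what this machinery establishes; it is not self-evident, since a candidate set $A$ might a priori exclude only stars already excluded by members of $\mm$.

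Second, even after reducing to $\ff(\bar 1)=\mm$ minimal with $\tau(\mm)=2$, the comparison among the competitors is not a soft ``minimizes the union sizes'' statement. The paper's Lemma~\ref{lemmin} does this by a telescoping count $f(z)$ showing $f(z-1)-f(z)\ge\binom{m-s-2}{k-3}>1$ (so $|\mm|=3$ beats $|\mm|\ge4$), and then a case analysis over triples: the case of two sets sharing $k-1$ elements gives $\mathcal T_2(k)$; the ``triangle'' case where all pairwise intersections are singletons requires an additional $(j_1,j_2)$-shift plus an augmentation argument to show it is strictly suboptimal; and the intermediate case $2\le|H_1\cap H_2|\le k-2$ needs a modified count giving a strict deficit $f(3)-f'(3)\ge1$. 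None of this is captured by your one-sentence optimization claim, so as written the proposal does not constitute a proof of either the bound or the uniqueness statement.
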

An important tool in the proof (as well as in the proof of Theorem~\ref{thmfull2}) is Lemma~\ref{lemmin}, in which we found an elegant way to bound the sizes of families $\ff$ satisfying $\tau(\ff)=3$ and  $\tau(\ff(\bar 1))=2$, and where $\ff(\bar 1)$ is minimal w.r.t. this property.

In the paper \cite{FOT1}, the authors managed to extend the result of \cite{F16} to the case $\tau =4$, determining the exact value of $c(n,k,4)$ and the structure of the extremal family for $n>n_0(k)$. The analysis in \cite{FOT1} is much more complicated than that in \cite{F16}, and the problem for $\tau\ge 5$ is still wide open. The result of \cite{FOT1} may be extended to much smaller $n$ in a similar way, but, of course, progress on the case $\tau \ge 5$ would be more interesting.\\

Erd\H os, Rothschild and Szemer\'edi (cf. \cite{Erd73}) raised the following question: how large can the intersecting family $\ff\subset {[n]\choose k}$ be, given that $\Delta(\ff)\le c|\ff|$. They proved that, for any fixed $2/3<c<1$, there exists $n_0(k,c)$, such that for any $n>n_0(k,c)$ the largest intersecting family under this restriction is $\mathcal H_2$ (see \eqref{eqhu}), up to isomorphism. Stronger results of this type were proven by Frankl \cite{F16}  and by F\"uredi \cite{Fur}. See also a survey \cite{DeF}, where the concise statement of the results is given.

The methods we developed in this paper allow us to extend these results to the range $n>Ck$, where $C$ depends on $c$ only. We are going to illustrate it for one such theorem, but we note that the same ideas would work for other cases.
Let us state one of the theorems, proven in \cite{F16}. Recall that a \underline{Fano plane} (projective plane of order $2$) $\mathcal P$ is a family consisting of $7$ $3$-element sets $P_1,\ldots, P_7$, such that $|P_i\cap P_j|=1$ for each $i\ne j$.
\begin{thm}[\cite{F16}]\label{thmfano}  Suppose that $\ff\subset {[n]\choose k}$ is intersecting, and $\Delta(\ff)\le c|\ff|$ for some $c\in (3/7,1/2)$. Assume that $\ff$ has the largest cardinality among such families. Then, for any $n>n_0(c,k)$, $\ff$ is isomorphic to
$$\mathcal D_{3/7}:=\ \Big\{F\in {[n]\choose k}\ :\  P\subset F\text{ for some }P\in \mathcal P\Big\}.$$
\end{thm}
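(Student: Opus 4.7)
The plan is to combine the Dinur--Friedgut junta approximation theorem \cite{DF} with the bipartite switching trick, following the ``approximate junta, then refine combinatorially'' framework the paper highlights for extensions to the range $n>Ck$. Since the extremal family must have size at least $|\mathcal D_{3/7}|$, which is of order $n^{k-3}$, the Dinur--Friedgut theorem applies and yields a set $J\subset [n]$ of size bounded by a function of $c$ and $k$, together with an intersecting up-set $\mathcal G\subset 2^J$, such that, writing $\mathcal G^* := \{F\in{[n]\choose k}: F\cap J \in \mathcal G\}$, we have $|\ff\triangle \mathcal G^*| = o\bigl({n-3\choose k-3}\bigr)$.

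Next, I would identify the Fano plane as the optimal junta. The condition $\Delta(\ff)\le c|\ff|$ with $c<1/2$ transfers (up to the negligible error) to $\mathcal G$: no element of $J$ may lie in members of $\mathcal G$ contributing more than a $c$-fraction of $|\mathcal G^*|$. For $n$ large relative to $|J|$, the dominant contribution to $|\mathcal G^*|$ and to every degree $d_i(\mathcal G^*)$, $i\in J$, comes from the inclusion-minimal members of $\mathcal G$. The problem reduces to classifying intersecting $t$-uniform ``kernels'' $\mathcal K$ on at most $|J|$ vertices with max-degree ratio $\le c$, and choosing the one maximizing $|\mathcal K|\cdot{n-|J|\choose k-t}$. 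The hypothesis $c>3/7$ renders the Fano plane ($t=3$, $7$ lines, each point in $3$ lines) feasible, while $c<1/2$ excludes all $t=2$ kernels (stars give ratio $1$, triangles give ratio $2/3$) and a short finite list of small $t=3$ competitors. A careful case analysis singles out the Fano plane as the unique kernel maximizing $|\mathcal G^*|$, and in particular fixes $|J|=7$ and $\mathcal G \cong \mathcal P$.

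Once $\mathcal G$ is pinned down, the approximate equality $\ff\approx \mathcal D_{3/7}$ must be promoted to an exact one via the bipartite switching trick of \cite{FK1,KZ}. For each ``bad'' set $F\in \ff\setminus \mathcal D_{3/7}$, one tries to pair it with a ``good'' set $G\in \mathcal D_{3/7}\setminus \ff$ such that swapping $F$ for $G$ preserves both the intersecting property and the max-degree constraint. When such a swap is obstructed, the obstruction forces a highly structured configuration on the coordinates in $J$, which can be ruled out quantitatively using the smallness of $\ff\triangle \mathcal D_{3/7}$. A Hall-type matching argument handles many swaps simultaneously, and shows $|\ff|<|\mathcal D_{3/7}|$ unless $\ff$ is isomorphic to $\mathcal D_{3/7}$.

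The main obstacle is the classification step: proving that the Fano plane is the unique optimal kernel across the full interval $c\in (3/7,1/2)$ requires ruling out competitors such as the Fano plane with one line deleted (and an ad hoc replacement added to $\mathcal G$), or non-Fano $3$-uniform intersecting hypergraphs on $7$ or $8$ points. Unlike star-based extremal problems, here the extremal configuration is a combinatorial design with no ``slack'', so the analysis must be quantitative rather than qualitative. A secondary difficulty in the switching phase is that the max-degree condition is an inequality that may be attained, so one must carefully bookkeep degrees to avoid inadvertently creating an element of degree above $c|\ff|$ during the swaps.
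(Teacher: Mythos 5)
Your overall framework (Dinur--Friedgut junta approximation followed by a combinatorial refinement) matches the paper's, but the step that actually carries the theorem is missing. You reduce to classifying intersecting kernels with max-degree ratio at most $c$ and then assert that ``a careful case analysis singles out the Fano plane'', while your own obstacles paragraph concedes this classification is the main difficulty; no argument is given, and it is not a short finite check as stated, since the junta center from Theorem~\ref{thmdf} has size bounded only by an unspecified constant (independent of $k$, by the way, not a function of $c$ and $k$), and the defining family may also contain sets of size larger than $3$. The paper closes exactly this gap with two short observations absent from your plan: first, $\Delta(\G)\ge |\G|/\tau^*(\G)$ for any family $\G$, so after discarding the negligible non-junta part, the sets meeting the center in more than $3$ elements, and kernel sets of size at most $2$ (ruled out by pigeonhole using $c<1/2$), the $3$-uniform part $\mathcal J^*_3$ of the defining family must satisfy $\tau^*(\mathcal J^*_3)>2$; second, F\"uredi's theorem says a $p$-uniform intersecting family has $\tau^*\le p-1$ unless it is a projective plane of order $p-1$, so $\mathcal J^*_3$ is the Fano plane, and since any set meeting all seven lines contains a line, the junta is forced to be $\mathcal D_{3/7}$. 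Without this, or an equivalent proof that every $3$-uniform intersecting family other than the Fano plane has a vertex covering at least half of its edges, your proposal does not establish the theorem; the competitors you worry about (Fano minus a line plus ad hoc sets, other small kernels) are precisely what the fractional-cover argument eliminates for free. Also note that Theorem~\ref{thmdf} controls only $|\ff\setminus\mathcal J|$, not the symmetric difference you invoke.

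The upgrade from approximate to exact structure is likewise left vague and is needlessly constrained in your version. A single swap of a bad set for a good set $G\supset P$ must keep the family intersecting, and other bad sets avoiding the line $P$ obstruct this, so the existence of your Hall-type matching is unclear; and there is no need to preserve $\Delta(\ff)\le c|\ff|$ along the way, because $\mathcal D_{3/7}$ itself is feasible, so it suffices to show $|\ff|<|\mathcal D_{3/7}|$ for every feasible $\ff$ not isomorphic to it. The paper does this globally, one Fano line $P$ at a time: the families $\{F\setminus P\,:\,P\subset F\in\ff\}$ and $\{F\in\ff\,:\,F\cap P=\emptyset\}$ are cross-intersecting, the latter has size at most ${n-7\choose k-4}$ by the junta bound, and \eqref{eqcreasy} shows that replacing them by all $k$-sets containing $P$ strictly increases the size unless the latter family was empty; iterating over the seven lines turns $\ff$ into $\mathcal D_{3/7}$ and gives the strict size comparison, hence uniqueness. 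I recommend replacing your classification step by the $\tau^*$/F\"uredi argument and your swapping step by this per-line cross-intersecting replacement.
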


Our contribution here is as follows.
\begin{thm}\label{thmbounddeg} There exists an absolute constant $C$, such that the following holds. In terms of Theorem~\ref{thmfano}, if there exists $\epsilon>0$, such that $c\in [3/7+\epsilon, 1/2-\epsilon]$, then the conclusion of Theorem~\ref{thmfano} holds for $n>\frac{C}{\epsilon}k$.
\end{thm}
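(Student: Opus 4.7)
The plan is to apply the junta-method framework that the paper develops for the covering-number theorem. First, I invoke the Dinur--Friedgut junta approximation \cite{DF}: for any $\delta>0$ there exist constants $j=j(\delta)$ and $C'=C'(\delta)$ such that, whenever $n>C'k$, every sufficiently large intersecting family $\ff\subset {[n]\choose k}$ (in particular, one of size at least $|\mathcal{D}_{3/7}|$) differs in at most $\delta {n-3\choose k-3}$ sets from a $j$-junta $\ff^*:=\{F\in{[n]\choose k}:F\cap J\in \mathcal{G}\}$, where $|J|=j$ and the ``germ'' $\mathcal{G}\subset 2^J$ is itself intersecting. Passing to the $p$-biased setting with $p=k/n$, the mass of $\ff^*$ is $\mu_p(\mathcal{G})(1+o(1))$, and the constraint $\Delta(\ff)\le c|\ff|$ translates, up to an additive $O(\delta)$, into $\max_{i\in J}\mu_p(\mathcal{G}_i)\le c\,\mu_p(\mathcal{G})$, where $\mathcal{G}_i:=\{S\in \mathcal{G}:i\in S\}$.

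Second, I classify the intersecting germs $\mathcal{G}$ that maximize $\mu_p(\mathcal{G})$ under the two-sided constraint $\max_i\mu_p(\mathcal{G}_i)/\mu_p(\mathcal{G})\in[3/7+\epsilon,1/2-\epsilon]$. Since $p$ is small, the minimal elements of $\mathcal{G}$ of smallest size dominate the measure; the upper bound $c\le 1/2-\epsilon$ excludes germs with a common element or a dominating pair (which force ratio $\ge 1/2$), while the lower bound $c\ge 3/7+\epsilon$ just admits the Fano-plane up-set $\mathcal{G}_{\mathcal{P}}:=\{S\subset[7]:\exists P\in\mathcal{P},\,P\subset S\}$. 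A finite case check, essentially the one underlying Frankl's proof of Theorem~\ref{thmfano}, shows that $\mathcal{G}_{\mathcal{P}}$ is the unique maximizer, with a quantitative gap of order $\Omega_{\epsilon}(1)$ to every non-isomorphic competitor. Consequently, after relabelling of coordinates, $J$ carries a Fano plane $\mathcal{P}$ and $|\ff\triangle \mathcal{D}_{3/7}|\le \delta{n-3\choose k-3}$.

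Third, I upgrade approximate to exact via the bipartite switching trick that the paper uses throughout (as in Sections~\ref{sec3} and~\ref{sec41}). For any $F\in \ff\setminus \mathcal{D}_{3/7}$, that is, any $F$ containing no Fano line, I pair $F$ with a suitable target $F'\in \mathcal{D}_{3/7}\setminus \ff$ obtainable from $F$ by a local swap, and check that replacing $F$ by $F'$ preserves the intersecting property and the degree constraint $\Delta\le c|\ff|$ while weakly increasing the size. Maximality of $\ff$ rules out any such $F$, so $\ff\subset \mathcal{D}_{3/7}$, and a short local argument then forces $\ff=\mathcal{D}_{3/7}$ up to isomorphism. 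The main obstacle is step two: one must show that the Fano germ beats \emph{every} other intersecting junta of bounded size by a margin depending only on $\epsilon$, uniformly in the junta size, so that the Dinur--Friedgut error $\delta$ can be chosen much smaller than this margin. This stability statement for the finite ``best intersecting configuration with degree ratio in $[3/7+\epsilon,1/2-\epsilon]$'' problem, combined with the bipartite switching in the exact step, is the core new ingredient.
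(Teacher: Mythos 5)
Your proposal follows the same high-level framework as the paper (Dinur--Friedgut junta approximation, identification of the junta, upgrade from approximate to exact), but the central step --- your ``step two'' --- is precisely where the paper's actual work lies, and your proposal does not carry it out. You acknowledge this yourself (``the main obstacle is step two: one must show that the Fano germ beats every other intersecting junta of bounded size by a margin depending only on $\epsilon$, uniformly in the junta size''), but you offer only ``a finite case check'' as the resolution. This does not work as stated: the center $J$ has size bounded by a (large, Dinur--Friedgut) constant $j(5)$, so the space of defining families $\mathcal{J}^*\subset 2^J$ is finite only in a trivial sense, and you would need a uniform quantitative separation that is not just asserted. The paper bypasses this entirely by a short chain of concrete reductions: (i) the degree constraint $c<1/2-\epsilon$ rules out any set of size $\le 2$ in $\mathcal{J}^*$, because a set $\{i,j\}\in\mathcal{J}^*$ forces $\max(d_i,d_j)\ge\frac12|\ff\cap\mathcal{J}|$; (ii) restricting to $\mathcal{J}_3^*=\mathcal{J}^*\cap\binom{[n]}{3}$ and using the elementary bound $\Delta(\mathcal{G})\ge|\mathcal{G}|/\tau^*(\mathcal{G})$, the degree constraint forces $\tau^*(\mathcal{J}_3^*)>2$; (iii) F\"uredi's theorem on fractional covering numbers of intersecting $p$-uniform families ($\tau^*\le p-1$ unless the family is a projective plane of order $p-1$) then pins down $\mathcal{J}_3^*$ as the Fano plane. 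Point (iii) is the crucial external input that replaces your would-be case check; nothing like it appears in your proposal, and without it the identification of the Fano germ does not go through.

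A secondary issue: your ``exact upgrade'' step is described as a set-by-set swap (``pair $F$ with a suitable target $F'$ ... while weakly increasing the size''), and you claim it ``preserves ... the degree constraint.'' Preserving the degree constraint after each individual swap is not obvious and is not needed. The paper instead performs a family-level replacement: for each Fano line $P$, the families $\ff(P)$ (sets containing $P$) and $\ff'(P)$ (sets in $\ff\setminus\mathcal{J}$ disjoint from $P$) are cross-intersecting with $|\ff'(P)|$ small, so the Kruskal--Katona-based inequality \eqref{eqcreasy} shows that replacing $\ff(P)\cup\ff'(P)$ by $\mathcal{J}(P)$ strictly increases the size. Doing this for all $P$ lands exactly on $\mathcal{D}_{3/7}$, which manifestly satisfies the degree bound; since $\ff$ was assumed to be a maximizer, $\ff=\mathcal{D}_{3/7}$. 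This avoids both the ``weak vs.\ strict increase'' issue and the intermediate-constraint issue in your sketch. Your proposal is structurally on the right track but has a real gap at the heart of the argument and an inefficient final step.
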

The proof of this theorem is given in Section~\ref{sec52}. In Section~\ref{sec6}, we discuss our results and methods, as well as pose some open problems.

\section{The complete diversity version of Frankl's theorem}\label{sec3}
 Let us begin with the statement of the Kruskal--Katona theorem. We shall state the Kruskal--Katona theorem in two different forms, but the most handy for our purposes is the form due to Hilton in terms of cross-intersecting families. Let us first give some definitions.

For a set $X$, \underline{lexicographical order} (lex) $\prec$ on the sets from ${X\choose k}$ is a total order, in which $A\prec B$ iff  the minimal element of $A\setminus B$ is smaller than the minimal element of $B\setminus A$.
 For $0\le m\le {|X|\choose k}$ let $\mathcal L(X,m,k)$ be the collection of $m$ largest $k$-element subsets of $X$ with respect to lex.
 We say that two families $\mathcal A,\mathcal B$ are \underline{cross-intersecting} if $A\cap B\ne \emptyset$ for any $A\in\mathcal A, B\in \mathcal B$.

\begin{thm}[\cite{Kr},\cite{Ka}]\label{thmHil}If $\mathcal A\subset {[n]\choose a}, \mathcal B\subset {[n]\choose b}$ are cross-intersecting then the families $\mathcal L([n],|\mathcal A|,a),\mathcal L([n],|\mathcal B|, b)$ are also cross-intersecting.
\end{thm}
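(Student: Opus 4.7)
My plan is to deduce Hilton's formulation from the shadow form of the Kruskal--Katona theorem, by reformulating cross-intersection as a shadow condition and matching extremal configurations under the complementation $X\mapsto[n]\setminus X$.

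First, I would reduce to the case $a+b\le n$; otherwise every $a$-set meets every $b$-set and the statement is vacuous. Setting $\bb^c:=\{[n]\setminus B:B\in\bb\}\subset{[n]\choose n-b}$, the equivalence $A\cap B=\emptyset\Longleftrightarrow A\subset[n]\setminus B$ shows that $\aaa$ and $\bb$ are cross-intersecting iff $\aaa\cap\partial_a(\bb^c)=\emptyset$, where $\partial_a(\bb^c):=\{A\in{[n]\choose a}:A\subset C\text{ for some }C\in\bb^c\}$ is the lower $a$-shadow. In particular $|\aaa|\le{n\choose a}-|\partial_a(\bb^c)|$.

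The key observation is that complementation $X\mapsto[n]\setminus X$ reverses the order $\prec$: indeed $X^c\setminus Y^c=Y\setminus X$, so $X^c\prec Y^c$ iff $Y\prec X$. Hence $\mathcal L([n],m,k)^c$ is always a lex-``final'' segment in ${[n]\choose n-k}$. The Kruskal--Katona theorem, in either a direct lex form or as derived from its classical colex version via this complementation, then supplies two facts: (a) $|\partial_a(\bb^c)|\ge|\partial_a(\mathcal L([n],|\bb|,b)^c)|$ for every $\bb$ of the given size; and (b) the minimizing shadow $\partial_a(\mathcal L([n],|\bb|,b)^c)$ is itself a lex-final segment of ${[n]\choose a}$, so that its complement in ${[n]\choose a}$ equals $\mathcal L([n],M,a)$ with $M:={n\choose a}-|\partial_a(\mathcal L([n],|\bb|,b)^c)|$. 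Combining with the CI bound gives $|\aaa|\le M$, so $\mathcal L([n],|\aaa|,a)\subseteq\mathcal L([n],M,a)={[n]\choose a}\setminus\partial_a(\mathcal L([n],|\bb|,b)^c)$, which is exactly the statement that $\mathcal L([n],|\aaa|,a)$ and $\mathcal L([n],|\bb|,b)$ are cross-intersecting.

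The main obstacle is verifying (a) and (b) above in the required lex form. Classical Kruskal--Katona is most naturally phrased for colex-initial segments, and matching these to the paper's $\mathcal L$-convention via the order-reversing complementation requires careful bookkeeping: one needs both the quantitative shadow bound and the structural fact that the minimizing shadow is itself a lex segment. A clean alternative that avoids this translation is the shifting method: introduce $(i,j)$-shifts, show that they preserve both family cardinalities and the cross-intersecting property, reduce to shifted families, and conclude by induction on $n$.
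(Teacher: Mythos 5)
The paper gives no proof of this statement: it is quoted as the classical Kruskal--Katona theorem in Hilton's cross-intersecting formulation, with references to Kruskal and Katona, so there is nothing internal to compare against. Your derivation from the shadow form is the standard way to establish the equivalence, and it is correct: the identification of cross-intersection with $\aaa\cap\partial_a(\bb^c)=\emptyset$, the bound $|\aaa|\le{n\choose a}-|\partial_a(\bb^c)|$, and the conclusion via nestedness of lex segments all go through. The one place where your ``careful bookkeeping'' genuinely matters is that complementation alone does not convert the paper's lex-initial segments into colex-initial ones: $\mathcal L([n],m,b)^c$ is a lex-\emph{final} segment of ${[n]\choose n-b}$, and to invoke the classical colex statement (colex-initial segments minimize the shadow, and their shadows are again colex-initial) you additionally need the relabeling $i\mapsto n+1-i$, under which the paper's order becomes reverse colex and lex-final segments become colex-initial ones. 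Since shadows are invariant under relabeling, both of your facts (a) and (b) then follow, and the argument closes. Your suggested alternative via $(i,j)$-shifts is also a legitimate self-contained route, and is in fact the method the paper uses elsewhere (Section 4) for related reductions.
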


In this section, we analyze in great details the relationship between the diversity of an intersecting family and its size. We first note that, if the value of diversity is given precisely, then it is  easy to determine the largest intersecting family with such diversity. Indeed, the subfamilies of sets containing the element of the largest degree and not containing the element of the largest degree are cross-intersecting, and one can get exact bounds using Theorem~\ref{thmHil}. Studying the size of an intersecting family with given upper bounds on diversity is not interesting: the largest intersecting family has diversity 0.

In this section we obtain the concluding version of Theorem~\ref{thm1}, which tells {\it exactly}, how large an intersecting family may be, given a {\it lower} bound on its diversity. We determine all ``extremal'' values of diversity and the sizes of the corresponding families.

The difficulty to obtain such a version of Theorem~\ref{thm1} is that, while Theorem~\ref{thmHil} gives a very strong and clear characterisation of families with fixed diversity, the size of the family is not monotone w.r.t. diversity (the size of the largest family with given diversity does not necessarily decrease as diversity increases, although it is true in ``most'' cases). Moreover, the numerical dependence between maximal possible sizes of $\aaa$ and $\bb$ given by  Theorem~\ref{thmHil} is complicated and difficult to work with, see \cite{FMRT} and \cite{FT7}. Thus, an effort is needed to find the right point of view on the problem.\vskip+0.1cm

We shall give two versions of the main theorem of this section. First, we give a ``quantitative'' version with explicit sharp bounds on the size of intersecting families depending on the lower bound on their diversity. It may be more practical to apply in some cases, but it is difficult to grasp what is hidden behind the binomial coefficients in the formulation. Thus, later in the section (and as an intermediate step of the proof), we shall give a ``conceptual'' version of our main theorem. We note that the proof that we present is completely computation-free.  In Sections~\ref{sec33},~\ref{sec34} we present strengthenings and generalisations of our main result. The cases of equality in Theorems~\ref{thmfull1} and~\ref{thmfull2} are described in Theorem~\ref{thmfulleq}. We provide a weighted version of the main result and a generalization to the case of cross-intersecting families.

We note that the main results of the section are meaningful for any $k\ge 3$. (This is by no means a serious restriction since possible structure of intersecting families in ${[n]\choose k}$ for $k\le 2$ is trivial.)\vskip+0.1cm

The following representation of natural numbers is important for the (classic form of) the Kruskal--Katona theorem. Given positive integers $\gamma$ and $k$, one can always write down $\gamma$ {\it uniquely} in the \underline{$k$-cascade form}:
$$\gamma = {a_k\choose k}+{a_{k-1}\choose k-1}+\ldots +{a_s\choose s}, \ \ a_k>a_{k-1}>\ldots >a_s\ge 1.$$

For the sake of comparison, let us state the classical version of the Kruskal--Katona theorem (equivalent to Theorem~\ref{thmHil}).

\begin{thm}[\cite{Kr},\cite{Ka}]\label{thmkk} Let $\ff\subset {[n]\choose k}$ and put $$\partial(\ff):=\big\{F'\in {[n]\choose k-1}\ :\  F'\subset F \text{ for some }F\in \ff\big\}.$$ If $|\ff| = {a_k\choose k}+\ldots +{a_s\choose s}$, then
$$|\partial(\ff)|\ge {a_k\choose k-1}+{a_{k-1}\choose k-1}+\ldots +{a_s\choose s-1}.$$
\end{thm}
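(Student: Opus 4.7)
The plan is to follow the classical shifting route. For $1\le i<j\le n$, define the shift operator $S_{ij}$ that sends $F\in\ff$ with $j\in F$, $i\notin F$ to $(F\setminus\{j\})\cup\{i\}$ whenever this set is not already in $\ff$, and leaves $F$ fixed otherwise; put $S_{ij}(\ff):=\{S_{ij}(F):F\in\ff\}$. First I would verify that $|S_{ij}(\ff)|=|\ff|$ (obvious) and $|\partial S_{ij}(\ff)|\le |\partial\ff|$. For the shadow inequality one argues on a typical $G\in \partial S_{ij}(\ff)$ by cases according to whether $i\in G$ and whether $j\in G$: the three cases other than ``$i\in G,\ j\notin G$'' are immediate, and the remaining one is handled by exhibiting an injection into $\partial\ff$. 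Iterating over all pairs, I may assume $\ff$ is \emph{shifted}, i.e. $S_{ij}(\ff)=\ff$ for all $i<j$.

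Next comes the key decomposition. Write $\ff=\ff_{\bar n}\sqcup \ff_n$ with $\ff_{\bar n}=\{F\in\ff:n\notin F\}$ and $\ff_n':=\{F\setminus\{n\}:F\in\ff,\ n\in F\}\subset{[n-1]\choose k-1}$. Shiftedness gives $\ff_n'\subseteq \partial \ff_{\bar n}$: indeed, if $F'\cup\{n\}\in\ff$ and $i\notin F'$, then $F'\cup\{i\}\in \ff$ by shifting, so $F'\in\partial\ff_{\bar n}$. Since $\partial \ff_{\bar n}\subset{[n-1]\choose k-1}$ and the $(k-1)$-sets contributing $n$ are counted separately, this yields
\begin{equation*}
|\partial\ff| \;=\; |\partial \ff_{\bar n}|\;+\;|\ff_n'|.
\end{equation*}

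The third step is a double induction on $k$ (trivial for $k=1$) and, within a fixed $k$, on $n$, establishing that among all shifted families in ${[n]\choose k}$ of size $m$ the colexicographic initial segment minimises $|\partial \ff|$. The colex initial segment of size $m={a_k\choose k}+{a_{k-1}\choose k-1}+\ldots+{a_s\choose s}$ decomposes as ${[a_k]\choose k}$ together with $\{a_k+1\}\cup A$ as $A$ ranges over the colex initial segment in ${[a_k]\choose k-1}$ of size ${a_{k-1}\choose k-1}+\ldots+{a_s\choose s}$, and so on recursively. Feeding the inductive hypothesis into the displayed identity, the two summands contribute ${a_k\choose k-1}$ (from ${[a_k]\choose k}$) and the cascade ${a_{k-1}\choose k-2}+\ldots+{a_s\choose s-1}$ (from the recursively built $\ff_n'$), giving the bound of the theorem.

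The main obstacle is this last step: matching the cascade expansion of $m$ with the recursive colex decomposition of $\ff$, since one has to argue that the shadow-minimising shifted families really do assemble into the colex initial segments at every level. An attractive alternative bypasses shifting altogether by invoking the already stated Hilton form (Theorem~\ref{thmHil}): setting $\mathcal B:={[n]\choose k-1}\setminus \partial \ff$, the families $\mathcal B\subset{[n]\choose k-1}$ and $\{[n]\setminus F:F\in\ff\}\subset{[n]\choose n-k}$ are cross-intersecting, so Theorem~\ref{thmHil} lets me replace each by its lex initial segment of the same size; translating back via the standard bijection between lex in ${[n]\choose n-k}$ and colex in ${[n]\choose k}$ yields the cascade bound for $|\partial \ff|$ directly, with the cost of some bookkeeping to identify the lex-initial-segment size with the cascade formula.
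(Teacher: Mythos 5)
First, a point of reference: the paper does not prove Theorem~\ref{thmkk} at all — it is the classical Kruskal--Katona theorem, quoted from \cite{Kr},\cite{Ka} ``for the sake of comparison'' with Theorem~\ref{thmHil} — so there is no proof in the paper to measure yours against, and your attempt must stand on its own. Your main route (compressions plus double induction) is the classically correct strategy, but it breaks at its pivot. The displayed identity $|\partial\ff|=|\partial\ff_{\bar n}|+|\ff_n'|$ is false: the $(k-1)$-sets of $\partial\ff$ containing $n$ are exactly the sets $G'\cup\{n\}$ with $G'\in\partial(\ff_n')$, so their number is $|\partial(\ff_n')|$, not $|\ff_n'|$. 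For instance $\ff=\{\{1,2,3\},\{1,2,4\}\}\subset\binom{[4]}{3}$ is shifted and has $|\partial\ff|=5$, while $|\partial\ff_{\bar 4}|+|\ff_4'|=3+1=4$. Relatedly, your inclusion $\ff_n'\subseteq\partial\ff_{\bar n}$, though correct, points the wrong way for the induction. The standard argument distinguishes the element $1$ (towards which shifting pushes): with $\ff_0:=\{F\in\ff:1\notin F\}$ and $\ff_1:=\{F\setminus\{1\}:1\in F\in\ff\}$, shiftedness gives $\partial\ff_0\subseteq\ff_1$ and hence the exact identity $|\partial\ff|=|\ff_1|+|\partial\ff_1|$. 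That inclusion is what forces the key lower bound $|\ff_1|\ge\binom{a_k-1}{k-1}+\dots+\binom{a_s-1}{s-1}$ (otherwise $|\ff_0|>\binom{a_k-1}{k}+\dots+\binom{a_s-1}{s}$ and the inductive bound on $|\partial\ff_0|$ would contradict $\partial\ff_0\subseteq\ff_1$), after which induction on $k$ applied to $\ff_1$ and Pascal's rule give the cascade. This comparison of $|\ff_1|$ with the shifted cascade is exactly the ``main obstacle'' you acknowledge and leave open; without it the argument does not close, because you cannot simply assume the extremal shifted family is the colex segment.

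Your fallback via Theorem~\ref{thmHil} is internally sound — the complement trick correctly shows that $\binom{[n]}{k-1}\setminus\partial\ff$ and $\{[n]\setminus F:F\in\ff\}$ are cross-intersecting, and the lex/colex translation does yield the cascade bound — but Theorem~\ref{thmHil} \emph{is} the Kruskal--Katona theorem in Hilton's cross-intersecting form (the paper explicitly calls the two equivalent), so this route derives the statement from itself in disguise rather than proving it. (A side remark: the second term of the paper's displayed bound should read $\binom{a_{k-1}}{k-2}$, consistent with the first and last terms; the cascade you write in your third step is the correct one.)
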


 Next, we start preparations to state the main result of this section. Given a number $\gamma<{n-1\choose k-1}$, let us write it in the  $(n-k-1)$-cascade form:
  $$\gamma = {n-b_1\choose n-k-1}+{n-b_{2}\choose n-k-2}+\ldots+{n-b_{s_b}\choose n-k-s_b},$$
  where $1< b_1<b_2<\ldots<b_{s_b}.$\footnote{Note that if $b_1=1$ then $\gamma\ge {n-1\choose k}\ge {n-1\choose k-1}$, which contradicts our assumption on $\gamma$.}
   Define $T_{\gamma}:=\{b_1,\ldots,b_{s_b}\}$ and put $S_{\gamma}:=T_{\gamma}\,\oplus\, [2,b_{s_b}-1],$ where $\oplus$ stands for symmetric difference. Note that $S_{\gamma}\cup T_{\gamma} = [2,b_{s_b}]$ and $S_{\gamma}\cap T_{\gamma} = \{b_{s_b}\}$. Suppose that $S_{\gamma} = \{a_1,\ldots, a_{s_a}\}$, where $1< a_1<\ldots<a_{s_a} := b_{s_b}$.

\begin{defi}\label{defresnu}   We call a nonnegative integer $\gamma$ \underline{resistant}, if either $\gamma={n-4\choose k-3}$ or the following holds:
   \begin{enumerate}
   \item $s_a := |S_{\gamma}|\le k$ and $s_b := |T_{\gamma}|\le k-1$;
   \item $b_i>2i+2$ for each $i\in [s_b]$.
%   \item $b_{s_b}\ge 2s_b+2$.
%    \item For convenience, we assume that ${n-4\choose k-3}$ is a resistant number.
   \end{enumerate}
\end{defi}

In particular, any integer $\gamma>{n-4\choose k-3}$ has ${n-4\choose k-3}$ as one of the summands in the $(n-k-1)$-cascade form, and thus is not resistant.

Let $0= \gamma_0<\gamma_1<\ldots <\gamma_{m} = {n-4\choose k-3}$ be all the resistant numbers in increasing order. %Put $\gamma_0=0$  for convenience.

\begin{thm}\label{thmfull1}  Let $n>2k\ge6$. Consider an intersecting family $\ff\subset {[n]\choose k}$. Suppose that $\gamma_{l-1}<\gamma (\ff)\le\gamma_l$ for $l\in [m]$ and that the representation of $\gamma_l$ in the $(n-k-1)$-cascade form is
$$\gamma_l = {n-b_1\choose n-k-1}+{n-b_{2}\choose n-k-2}+\ldots +{n-b_{s_b}\choose n-k-s_b},$$
then \begin{equation}\label{eqfull1}|\ff|\le {n-a_1\choose n-k}+{n-a_{2}\choose n-k-1}+\ldots +{n-a_{s_a}\choose n-k-s_a+1}+\gamma_l,\end{equation}
where $\{b_1,\ldots, b_{s_b}\} = T_{\gamma_l}$ and $\{a_1,\ldots, a_{s_a}\} = S_{\gamma_l}$.

 The expression in the right hand side of \eqref{eqfull1} strictly decreases as $l$ increases.

Moreover, the presented bound is sharp: for each $l =1,\ldots, m$ there exists an intersecting family with diversity $\gamma_l$ which achieves the bound in \eqref{eqfull1}.
\end{thm}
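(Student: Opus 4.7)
The plan is to reduce the question to a cross-intersecting extremal problem on $\binom{[2,n]}{k-1}$ and $\binom{[2,n]}{k}$, apply the Hilton form of Kruskal--Katona (Theorem~\ref{thmHil}) to pass to lex-initial segments, and then read the bound off the cascade form of $\gamma$. First, by relabeling I may assume $\Delta(\ff) = d_1(\ff)$, so that $\aaa := \ff(1) \subset \binom{[2,n]}{k-1}$ and $\bb := \ff(\bar 1) \subset \binom{[2,n]}{k}$ are cross-intersecting in $[2,n]$ with $|\bb| = \gamma(\ff) \in (\gamma_{l-1}, \gamma_l]$ and $|\ff| = |\aaa| + |\bb|$. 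Theorem~\ref{thmHil} lets me replace $(\aaa, \bb)$ by the lex-initial pair of the same sizes, still cross-intersecting; I may further enlarge $\aaa$ to the maximum lex-initial segment $\aaa^{\star}$ cross-intersecting with $\bb^{\star} := \mathcal{L}([2,n], \gamma, k)$ without decreasing $|\ff|$. Hence it suffices to bound $|\aaa^{\star}| + \gamma$ for such lex-initial pairs.

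At a resistant value $\gamma = \gamma_l$, I analyse $\bb^{\star} = \mathcal{L}([2,n], \gamma_l, k)$ through its $(n-k-1)$-cascade. The $i$-th summand $\binom{n-b_i}{n-k-i}$ records a specific ``block'' of $k$-sets in $\bb^{\star}$. A $(k-1)$-subset $A \subset [2,n]$ cross-intersects with $\bb^{\star}$ iff $[2,n] \setminus A$ contains no $B \in \bb^{\star}$. Decoding this condition block by block shows that the largest lex-initial $\aaa^{\star}$ cross-intersecting with $\bb^{\star}$ has size precisely $\sum_{i=1}^{s_a} \binom{n-a_i}{n-k-i+1}$, where $S_{\gamma_l} = T_{\gamma_l} \oplus [2, b_{s_b}-1] = \{a_1 < \ldots < a_{s_a}\}$ plays the role of the ``dual cascade'' of $T_{\gamma_l}$. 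The three clauses of Definition~\ref{defresnu} ($s_a \le k$, $s_b \le k-1$, $b_i > 2i+2$) are exactly the hypotheses that make this cascade for $|\aaa^{\star}|$ non-degenerate: every block is genuine, no summand collapses into a larger one, and the dual length $s_a$ is as predicted. This establishes \eqref{eqfull1} at every resistant $\gamma_l$.

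For non-resistant $\gamma \in (\gamma_{l-1}, \gamma_l)$, the $k$-sets added when $\gamma$ grows through the interval correspond to cascade summands that violate Definition~\ref{defresnu}; I will show that such additions eliminate no $(k-1)$-set from the admissible $\aaa^{\star}$. Consequently $|\aaa^{\star}|$ is constant on $(\gamma_{l-1}, \gamma_l]$, so $|\aaa^{\star}| + \gamma$ is maximized at $\gamma = \gamma_l$ and realises the right-hand side of \eqref{eqfull1}. Strict decrease of that right-hand side across successive resistant values is then a clean binomial computation: at each transition $\gamma_l \mapsto \gamma_l + 1$ one summand of the $|\aaa^{\star}|$-part drops by strictly more than one, outweighing the unit gain in $\gamma$. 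Sharpness at each resistant $\gamma_l$ follows by taking $\ff$ to be the maximal intersecting family with $\ff(\bar 1) = \mathcal{L}([2,n], \gamma_l, k)$: by construction this family is intersecting, has diversity exactly $\gamma_l$, and saturates \eqref{eqfull1}.

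The main technical obstacle is the cascade bookkeeping of the second step --- matching the summands of $\gamma_l$ with those of $|\aaa^{\star}|$ through $S_{\gamma_l} = T_{\gamma_l} \oplus [2, b_{s_b}-1]$, and verifying that the three resistance clauses are precisely what is needed for this matching. The boundary value $\gamma = \binom{n-4}{k-3}$ singled out in Definition~\ref{defresnu} arises because at this value $b_1 = 4$ just fails the strict inequality $b_1 > 4$; nevertheless it is still the last local maximum of $|\aaa^{\star}| + \gamma$ before the non-resistant plateau lying above, and must be appended by hand to the list of resistant values for the monotonicity argument of the third step to work uniformly.
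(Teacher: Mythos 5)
Your first reduction step is correct and is the same as the paper's: set $\aaa := \ff(1)$, $\bb := \ff(\bar 1)$, pass via Theorem~\ref{thmHil} to lex-initial segments, and replace $\aaa$ by the largest lex-initial $\aaa^\star$ cross-intersecting with $\bb^\star = \mathcal L([2,n],\gamma,k)$. The sharpness claim and its construction are also correct. However, the core of your argument for non-resistant $\gamma$ contains a genuine gap.

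You claim that for $\gamma\in(\gamma_{l-1},\gamma_l]$ the size $|\aaa^\star(\gamma)|$ is constant, because adding sets to $\bb^\star$ inside the interval ``eliminates no $(k-1)$-set from the admissible $\aaa^\star$.'' This is false, and in fact it \emph{cannot} be true, because $\aaa^\star(\gamma)$ is always the family of all $(k-1)$-sets meeting every set of $\bb^\star(\gamma)$; adding any new $B$ with $n-1-k\ge k-1$ (which holds whenever $n>2k$) eliminates all $(k-1)$-subsets of $[2,n]\setminus B$ that were previously admissible. A concrete check: take $k=4$, $n=10$, where the resistant values are $0,1,6$. At $\gamma=2$ the family $\bb^\star=\{\{2,3,4,5\},\{2,3,4,6\}\}$ gives $|\aaa^\star|=68$, while at $\gamma=6$ one has $\bb^\star=\{A\in\binom{[2,10]}{4}:\{2,3,4\}\subset A\}$ and $|\aaa^\star|=64$. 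So $|\aaa^\star|$ drops from $68$ to $64$ across the interval $(1,6]$. (Both endpoints give $|\aaa^\star|+\gamma=70$; the pair at $\gamma=2$ is the $T_l$-neutral pair $T_l\cup\{2|T_l|\}$ of Theorem~\ref{thmfulleq}, and Theorem~\ref{thmfulleq} explicitly exhibits such equality cases strictly inside the interval, which already contradicts your ``constant on the interval'' picture.) What must actually be shown is that the \emph{sum} $|\aaa^\star(\gamma)|+\gamma$ does not exceed $|\aaa^\star(\gamma_l)|+\gamma_l$, i.e. the increase of $|\aaa^\star|$ as $\gamma$ decreases is at most the decrease of $\gamma$. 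Establishing that requires the bipartite switching argument (Lemma~\ref{lemdiv}): one compares a non-resistant pair to the next resistant one using a biregular bipartite graph between blocks $\mathcal P_a,\mathcal P_b$ of $k$- and $(k-1)$-sets with a common intersection pattern, and uses that in a biregular bipartite graph the maximum independent set is a part. Your proposal never introduces this mechanism, and without it the inequality does not follow.

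A secondary gap: the strict decrease of the right-hand side of \eqref{eqfull1} across consecutive resistant values is not ``a clean binomial computation.'' It is precisely the content of Lemma~\ref{lemres}, which also rests on the bipartite switching trick and involves a case split on whether $T'\supset T$. Consecutive resistant values can be far apart (in the example above, from $1$ to $6$), so the one-step ``drops by strictly more than one'' picture you sketch does not apply directly. Likewise, the statement that the three resistance clauses ``make the cascade for $|\aaa^\star|$ non-degenerate'' is an assertion, not an argument: those clauses are what guarantee that the block $\mathcal P_b$ dominates $\mathcal P_a$ in the switching, and that is what needs to be verified.
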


Theorem~\ref{thm1} and the concept of diversity was successfully used to advance in several problems concerning intersecting families \cite{Feg, FK5, IK, Kup22}. However, sometimes the statement of Theorem~\ref{thm1} was not fine-grained enough for the applications, and some extra work was needed to deduce some rudimentary versions of Theorem~\ref{thmfull1}. (One example of an application of an easy application of Theorem~\ref{thmfull1} is Theorem~\ref{thmhk} in the introduction.) This is one of our motivations for proving Theorem~\ref{thmfull1}. The other motivation is the desire to have the conclusive version of Theorems~\ref{thmfr} and~\ref{thm1} and highlight the parallel with the original version of the Kruskal--Katona theorem.

Let us mention that we state Theorem~\ref{thmfull1} only for $\gamma(\ff)\le {n-4\choose k-3}$, since Theorem~\ref{thm1} already gives us the bound $|\ff|\le {n-2\choose k-2}+2{n-3\choose k-2}$ if $\gamma(\ff)\ge {n-4\choose k-3}$, and we cannot get any better bound in general for larger $\gamma$. Indeed, the intersecting family $\mathcal H_2$ (cf. \eqref{eqhu}) attains the bound above on the cardinality and has diversity ${n-3\choose k-2}$. The second part of Corollary~\ref{corhm} complements Theorem~\ref{thm1} in this respect, showing the aforementioned example is essentially the only exception. We also note that in a recent work \cite{Kup21} the author managed to prove that there exists $c$, such that for any $n,k$ satisfying $n\ge ck$ we have $\gamma(\ff)\le {n-3\choose k-2}$ for any intersecting  $\ff\subset{[n]\choose k}$.
Earlier, this was shown to be true for $n\ge 6k^2$ by Frankl \cite{Fra6}. Thus, for $n\ge ck$ Theorem~\ref{thmfull1} gives a complete answer to the question we address.

In Section~\ref{sec33}, we shall analyze the cases when the equality in \eqref{eqfull1} can be attained, as well as the cases of very large diversity. In particular, we shall prove the $u=3$ part of Corollary~\ref{corhm}.

\subsection{Some examples and Theorem~\ref{thmfull1} restated}\label{sec31}
Let us try to familiarize the reader with the statement of Theorem~\ref{thmfull1}. We have $\gamma_i = i$ for $i = 1,\ldots, k-3$. Indeed,  for $1\le \gamma <n-k-1$ we have $\gamma  = {n-k-1\choose n-k-1}+{n-k-2\choose n-k-2}+\ldots +{n-k-\gamma\choose n-k-\gamma}$. Thus, for any such $\gamma$ we have $T_{\gamma} = [k+1,k+\gamma]$ and $S_{\gamma} = [2,k]\cup \{\gamma\}$. Condition (1) in Definition~\ref{defresnu}  is satisfied if $\gamma\le k-1$. Condition (2) is satisfied iff $k+\gamma>2\gamma+2$, which is equivalent to $\gamma\le k-3$. (Note also that $\gamma=1$ is resistant for $k=3$.) From the discussion above, we also conclude that, for $k>3$, we have $\gamma_{k-2} = {n-k\choose n-k-1}=n-k.$

The following observation is given  for the sake of familiarizing the reader with the statement.
\begin{obs} The bound in Theorem~\ref{thmfull1} is always at least as strong as the bound in Theorem~\ref{thm1} for intersecting $\ff\subset {[n]\choose k}$ with diversity $\gamma(\ff)\le {n-4\choose k-3}$.
\end{obs}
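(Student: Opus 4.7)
The plan is to deduce the observation directly from the sharpness assertion of Theorem~\ref{thmfull1}, with essentially no further computation. The point is that for any intersecting $\ff$ with $\gamma(\ff)\le\binom{n-4}{k-3}$, the Theorem~\ref{thmfull1} bound is realized by some honest intersecting family with diversity equal to the next resistant value, and we can then feed that family into Theorem~\ref{thm1} to obtain the desired comparison.

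First, I would locate the unique $l\in[m]$ with $\gamma_{l-1}<\gamma(\ff)\le\gamma_l$ and write $B_1$ for the right-hand side of \eqref{eqfull1} at this $l$, so that Theorem~\ref{thmfull1} provides $|\ff|\le B_1$. Invoking the sharpness clause of Theorem~\ref{thmfull1}, I would then fix an intersecting family $\ff^*\subset\binom{[n]}{k}$ with $\gamma(\ff^*)=\gamma_l$ and $|\ff^*|=B_1$. Now, for any real $u\in[3,k]$ that is admissible in Theorem~\ref{thm1} applied to $\ff$ (that is, $\binom{n-u-1}{n-k-1}\le\gamma(\ff)$), one has
$$\binom{n-u-1}{n-k-1}\le\gamma(\ff)\le\gamma_l=\gamma(\ff^*),$$
so the same $u$ is admissible for $\ff^*$ as well. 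Applying Theorem~\ref{thm1} to $\ff^*$ with this $u$ yields
$$B_1\;=\;|\ff^*|\;\le\;\binom{n-1}{k-1}+\binom{n-u-1}{n-k-1}-\binom{n-u-1}{k-1},$$
and the right-hand side is exactly the Theorem~\ref{thm1} bound on $|\ff|$ at this choice of $u$. Since $u$ was arbitrary among the admissible values for $\ff$, this says that $B_1$ is at most any bound Theorem~\ref{thm1} can produce for $\ff$.

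I do not anticipate a real obstacle. The only mildly delicate point is the monotonicity observation that enlarging the diversity from $\gamma(\ff)$ to $\gamma_l$ cannot shrink the set of admissible $u$'s in Theorem~\ref{thm1}, which is immediate from the fact that $\binom{n-u-1}{n-k-1}$ is a decreasing function of $u$ together with $\gamma(\ff)\le\gamma_l$. Once this is noted, the entire comparison is a one-line invocation of Theorem~\ref{thm1} on the extremal witness $\ff^*$ guaranteed by Theorem~\ref{thmfull1}.
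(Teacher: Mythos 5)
Your proposal is correct and takes essentially the same route as the paper: the paper also reduces the comparison to the resistant values $\gamma_l$ via the monotonicity of the right-hand side of \eqref{eq01} in $\gamma$, and then uses the sharpness of \eqref{eqfull1} at $\gamma_l$ — i.e., the existence of an extremal witness to which Theorem~\ref{thm1} can be applied — to conclude that \eqref{eq01} can only be weaker. Your version just makes the admissibility of each $u$ for the witness $\ff^*$ explicit, which is a fine way to phrase the same argument.
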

Let us first compare the statement of Theorem~\ref{thmfull1} with the statement of Theorem~\ref{thm1} for $\gamma_l := {n-u-1\choose n-k-1}$ with integer $u$. Such $\gamma_l$ is resistant for any $u\in [3,k]$, and we have $A_{\gamma_l}=[2,u+1]$.
% (note that ${n-4\choose k-3}$ is resistant due to the exceptional condition 3 in the definition).
Thus, if we substitute such $\gamma_l$ in \eqref{eqfull1}, then we get the bound $$|\ff|\le {n-2\choose n-k}+\ldots +{n-u-1\choose n-k-u+1}+\gamma_l = {n-1\choose n-k}-{n-u-1\choose n-k-u}+{n-u-1\choose n-k-1},$$ which is exactly the bound \eqref{eq01} for $\gamma_l$.

Remark that the inequality \eqref{eqfull1}, however, gives a stronger conclusion and is valid in weaker assumptions. Indeed, while we know that this bound is sharp for $\gamma(\ff) = \gamma_l$, Theorem~\ref{thmfull1} also tells us that $\ff$ with $\gamma(\ff)>\gamma_l$ has strictly smaller size (quantified in Corollary~\ref{corhm}). Moreover, even if $\gamma_{l-1}<\gamma(\ff)\le \gamma_l$, we are still getting the same upper bound.

Returning to the proof of the observation, assume now that $u$ is a real number. The function in the right hand side of \eqref{eq01} is  monotone decreasing as $\gamma(\ff)$ increases (or, equivalently, as $u$ decreases). Therefore, to show that the bound \eqref{eqfull1} is stronger than \eqref{eq01}, it is sufficient to verify it for all values of $\gamma_l$, $l=0,\ldots, m$. But for each of these values the bound \eqref{eqfull1} is sharp, so \eqref{eq01} can be only weaker than \eqref{eqfull1}.

\begin{proof}[Proof of the first part of Corollary~\ref{corhm}] Here, we prove  Corollary~\ref{corhm} for (integer) $u\ge 4$. Choose $l$ such that $\gamma_l={n-u-1\choose n-k-1}$. Assume first that $k\ge 5$. Then $\gamma_{l+1}={n-u-1\choose n-k-1}+{n-k-2\choose n-k-2}={n-u-1\choose n-k-1}+1=:\gamma$. To see this, we just have to check that $\gamma$ is a resistant number. In terms of Definition~\ref{defresnu}, we have $T_{\gamma}=\{u+1,k+2\}$ and $S_{\gamma}=\{2,3,\ldots, u,u+2\ldots, k+2\}.$ We have $s_b=2\le k-1$ and $s_a=k$, thus, condition~(1) of Definition~\ref{defresnu} is fulfilled. Moreover, $b_1=u+1>2\cdot 1+2$ and $b_2=k+2>2\cdot 2+2$, fulfilling condition~(2) of the definition. Thus, \eqref{eqfull1} implies that
\begin{footnotesize}\begin{align*}|\ff|\le& {n-2\choose n-k}+\ldots+{n-u\choose n-k-u+2}+{n-u-2\choose n-k-u+1}+\ldots+{n-k-2\choose n-2k+1}+{n-u-1\choose n-k-1}+1\\
=&{n-1\choose n-k}-{n-u-1\choose n-u-k}+{n-u-1\choose n-k-1}+1-\epsilon,\end{align*}\end{footnotesize}
where
\begin{footnotesize}$$\epsilon={n-u-1\choose n-k-u+1}-\Big({n-u-2\choose n-k-u+1}+\ldots+{n-k-2\choose n-2k+1}\Big)={n-k-2\choose n-2k}={n-k-2\choose k-2}.$$\end{footnotesize}
If $k=4$ then $u=4$ and $\gamma_{l+1}={n-4\choose k-3}={n-4\choose 1}$. We get that
\begin{footnotesize}$$|\ff|\le {n-1\choose k-1}-{n-4\choose k-1}+{n-4\choose k-3}={n-1\choose k-1}-{n-5\choose k-1}+{n-5\choose k-4}-\epsilon,$$\end{footnotesize} where
\begin{footnotesize}$$\epsilon = {n-5\choose k-2}-{n-5\choose k-3}={n-5\choose 2}-{n-5\choose 1}={n-6\choose 2}-1.$$
\end{footnotesize}
The second part of the corollary is proved in Section~\ref{sec33}.
\end{proof}

%As a matter of fact, we can replace the bound in \eqref{eq01} with {\it any} monotone decreasing function of $\gamma(\ff)$, provided that the bound holds for each $\gamma_l$. \\

Our next goal is to state the ``conceptual'' version of Theorem~\ref{thmfull1}.
It requires certain preparations. We will use the framework and some of the ideas from \cite{FK1}, as well as from \cite{KZ}. First of all, we switch to the cross-intersecting setting. Given an intersecting family $\ff$ with $\Delta(\ff)=\delta_1(\ff)$, consider the families
\begin{align*}
\ff(1):=&\{F\setminus\{1\}\ :\ 1\in F\in\ff\}\ \ \ \ \ \text{and}\\
\ff(\bar 1):=&\{F\ :\ 1\notin F\in\ff\}.
\end{align*}
Remark that $\gamma(\ff)=|\ff(\bar 1)|$. Applying Theorem~\ref{thmHil}, from now on and until the end of Section~\ref{sec3} we assume that $\ff(1)=\mathcal L([2,n],|\ff(1)|,k-1)$ and $\ff(\bar 1) = \mathcal L([2,n],|\ff(\bar 1)|, k)$. Note that $\ff(1),\ff(\bar 1)\subset 2^{[2,n]}$. For shorthand, we denote $\aaa:=\ff(1), \bb:=\ff(\bar 1)$.  While proving Theorem~\ref{thmfull1}, we
%the remaining part dedicated to Theorem~\ref{thmfull1}, we
will work with the ground set $[2,n]$, in order not to confuse the reader and to keep clear the relationship between the diversity of intersecting families and the sizes of pairs of cross-intersecting families.\vskip+0.1cm

Both $\aaa$ and $\bb$ are determined by their lexicographically last set. In this section, we use the lexicographical/containment order on $2^{[2,n]}$, which is defined as follows: $A\prec B$ iff $A\supset B$ or the minimal element of $A\setminus B$ is smaller than the minimal element of $B\setminus A$. Let us recall some notions and results from \cite{FK1} related to the Kruskal--Katona theorem and cross-intersecting families. For sets $S$ and $X$,  $|S\cap X|\le a$, we define
$$\mathcal L(X,S,a):=\Big\{A\in {X\choose a}\ :\ A\prec S\cap X\Big\}.$$
For example, the family $\{G\in{[2,n]\choose 10}\ :\  2\in G, G\cap\{3,4\}\ne \emptyset\}$ is the same as the family $\mathcal L([2,n],S,10)$ for $S=\{1,2,4\}$. If $\G=\mathcal L(X,S,a)$ for a certain set $S$, then we say that $S$ is the \underline{characteristic set} of $\G$.

Note that, in what follows, we shall work with $X=[2,n]$ and will omit $X$ from the notation for shorthand. For convenience, we shall assume that $1\in S$ (motivated by the fact that $S$ will stand for the characteristic set for the subfamily of all sets containing 1 in the original family), while $T\subset [2,n]$.

\begin{defi}\label{defrespa} We say that two sets $S\subset [n]$ and $T\subset [2,n]$  \underline{form a resistant pair}, if either $T =\{2,3,4\}$ and $S = \{1,4\}$,  or the following holds: assuming that the largest element of $T$ is $j$, we have
\begin{enumerate}
  \item $S\cap T = \{j\},\ S\cup T = [j]$, $|S|\le k,$ $|T|\le k$;
  \item for each $ i\ge 4$, we have $|[i]\cap S|< |[i]\setminus S|$.
\end{enumerate}
\end{defi}
Condition (2), roughly speaking, states that in $[i]$ there are more elements in $T$ than in $S$. Note that 2 implies that $T\supset \{2,3,4\}$ for each resistant pair. There is a close relationship between this notion and the notion of a resistant number, which we discuss a bit later. Let us first give the characteristic set version of Theorem~\ref{thmfull1}. For convenience, we put $T_0 = [2,n]$ to be the characteristic set of the empty family and $S_0:=\{1,n\}$ to be the characteristic set of the family ${[2,n]\choose a}$.

\begin{thm}\label{thmfull2} Let $n>2k\ge6$. Consider all resistant pairs $S_l\subset [n],\ T_l\subset [2,n]$, where $l\in [m]$. Assume that $T_0<T_1<T_2<\ldots <T_m$. Then
\begin{equation}\label{eqfull2} |\mathcal L(S_{l-1},k-1)|+|\mathcal L(T_{l-1},k)|>|\mathcal L(S_{l},k-1)|+|\mathcal L(T_{l},k)| \ \ \ \text{ for each }l \in [m],\end{equation}
and any cross-intersecting pair of families $\mathcal A\subset {[2,n]\choose k-1},\ \mathcal B\subset {[2,n]\choose k}$ with $|\mathcal L(T_{l-1},k)|<|\bb|\le |\mathcal L(T_l,k)|$ satisfies \begin{equation}\label{eqfull3} |\aaa|+|\bb|\le |\mathcal L(S_l,k-1)|+|\mathcal L(T_l, k)|.\end{equation}

In terms of intersecting families, if $\ff\subset {[n]\choose k}$ is intersecting and $|\mathcal L(T_{l-1},k)|<\gamma(\ff)\le |\mathcal L(T_l,k)|$, then $|\ff|\le |\mathcal L(S_l,k-1)|+|\mathcal L(T_l, k)|$.
\end{thm}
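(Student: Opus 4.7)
The plan is to work in the cross-intersecting formulation and reduce to lex-initial segments. Given any cross-intersecting $\aaa\subset{[2,n]\choose k-1}$, $\bb\subset{[2,n]\choose k}$, Theorem~\ref{thmHil} lets me replace each by the lex-initial segment of the same size, so I may assume $\aaa=\mathcal{L}(S,k-1)$ and $\bb=\mathcal{L}(T,k)$ for characteristic sets $S\subset[n]$ with $1\in S$ and $T\subset[2,n]$. The problem then reduces to understanding, for each $T$, the lex-largest $S^*(T)$ for which $\mathcal{L}(S^*(T),k-1)$ and $\mathcal{L}(T,k)$ remain cross-intersecting, and analyzing how the total $|\mathcal{L}(S^*(T),k-1)|+|\mathcal{L}(T,k)|$ varies with $T$.

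Pinning down $S^*(T)$ is a matter of reading off the lex structure: the lex-last set of $\mathcal{L}(S^*(T),k-1)$ must intersect every set in $\mathcal{L}(T,k)$, and this uniquely determines $S^*(T)$ as an explicit function of $T$. I expect the resistant pairs of Definition~\ref{defrespa} to be precisely the $T$'s at which $(S^*(T),T)$ satisfies the ``tight'' structural identities $S\cap T=\{\max T\}$ and $S\cup T=[\max T]$, together with the imbalance condition $|[i]\cap S|<|[i]\setminus S|$ for all $i\ge 4$. Intuitively, these are the ``stable'' configurations where one cannot advance $T$ by a single lex-step and simultaneously increase the sum.

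With this structure in place, I would prove \eqref{eqfull2} by walking through consecutive resistant pairs $(S_{l-1},T_{l-1})\to(S_l,T_l)$: the transition amounts to a specific exchange of a few elements between $S$ and $T$ driven by the advancement of $T$ in lex order, and a direct binomial-identity calculation (invoking condition~(2) of Definition~\ref{defrespa}) yields strict decrease. For \eqref{eqfull3}, the key input is the bipartite switching trick of~\cite{FK1,KZ}: between two consecutive resistant values of $|\bb|$ there is no intermediate ``local peak'', so every configuration with $|\mathcal{L}(T_{l-1},k)|<|\bb|\le|\mathcal{L}(T_l,k)|$ can be switched toward $(S_l,T_l)$ without decreasing $|\aaa|+|\bb|$, yielding the desired upper bound. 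The main obstacle I foresee is executing the switching argument uniformly across all intermediate $T$'s and verifying that the parity-like condition~(2) of Definition~\ref{defrespa} is exactly what prevents spurious peaks between resistant pairs; this bookkeeping is the technical heart of the proof.

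The intersecting-family conclusion is then immediate: for $\ff\subset{[n]\choose k}$ intersecting with $\Delta(\ff)=d_1(\ff)$, setting $\aaa:=\ff(1)$ and $\bb:=\ff(\bar 1)$ yields cross-intersecting families on $[2,n]$ with $|\ff|=|\aaa|+|\bb|$ and $\gamma(\ff)=|\bb|$, so \eqref{eqfull3} applied with the appropriate $l$ delivers the final inequality.
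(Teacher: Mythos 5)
Your outline follows the same route as the paper: reduce to lexicographic families via Theorem~\ref{thmHil}, describe the maximal partner $S^*(T)$ of a given $T$ (this is Proposition~\ref{cross2}: the two characteristic sets strongly intersect in their largest element), identify resistant pairs as the stable configurations, push every non-resistant configuration to a resistant one by switching, and finish with monotonicity among resistant pairs. However, the proposal stops exactly where the proof begins. The two assertions you defer to ``bookkeeping'' are the entire content of the paper's Lemmas~\ref{lemdiv} and~\ref{lemres}, and neither is routine. For the switching step, the precise mechanism is: take the smallest $i\ge 5$ with $|[i]\cap S|\ge|[i]\setminus S|$, set $T':=[i]\setminus S$, observe that $S'$ and $S\cap[i]$ are \emph{consecutive} in lex order on $[i]$ (so that $\aaa\setminus\aaa'=\mathcal P_a$ and $\bb'\setminus\bb=\mathcal P_b$ for the two ``slices'' $\mathcal P_a,\mathcal P_b$ of sets with prescribed trace on $[i]$), and then use that in the biregular bipartite disjointness graph on $\mathcal P_a\cup\mathcal P_b$ the largest independent set is the larger part, together with $|\mathcal P_b|\ge|\mathcal P_a|$ because $n-i>s_a+s_b$. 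None of this is in your proposal, and it is precisely here that condition~(2) of Definition~\ref{defrespa} enters.

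The second gap is more serious. You describe \eqref{eqfull2} as ``a specific exchange of a few elements'' between consecutive resistant pairs plus ``a direct binomial-identity calculation.'' Consecutive resistant sets $T_{l-1},T_l$ need not differ by a small exchange, and the comparison in the paper (Lemma~\ref{lemres}) requires two cases and, in the case $T'\supset T$, a whole chain of intermediate characteristic sets $T_i=T'\cap[t_i]$, each transition justified by a separate bipartite-graph argument using the inequality $|[t_i]\setminus S'|\ge|[t_i]\cap S'|+2$, with strictness only at the last step. Moreover, \eqref{eqfull3} needs this monotonicity for the \emph{arbitrary} resistant pair $(S',T')$ produced by the switching step, not merely for consecutive resistant pairs, so a case-by-case binomial verification between neighbours would not suffice as stated. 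As written, the proposal is a correct plan with the central arguments asserted rather than proven.
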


First, we remark that the intersecting part is clearly equivalent to the second statement of the cross-intersecting part. Second, Proposition~\ref{prop9} below shows that the families $L(S_l,k-1)$ and $L(T_{l},k)$ are cross-intersecting and thus \eqref{eqfull3} is sharp.

We say that two sets $S$ and $T$ in $[2,n]$ \underline{strongly intersect}, if there exists a positive integer $j$ such that $S\cap T\cap[2,j]=\{j\}$ and $S\cup T\supset [2,j]$.
 The following proposition was proven in \cite{FK1}:

\begin{prop}[\cite{FK1}]\label{prop9} Let $A$ and $B$ be subsets of $[2,n]$, $|A|\le a, |B|\le b$, and $|[2,n]|=n-1\ge a+b$. Then $\mathcal L(P,a)$ and $\mathcal L(Q,b)$ are cross-intersecting iff $P$ and $Q$ strongly intersect.
\end{prop}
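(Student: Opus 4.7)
The plan is to exploit a concrete reformulation of the lex/containment order $\prec$: a set $A \in \binom{[2,n]}{a}$ satisfies $A \prec P$ if and only if either $A=P$, or, letting $e := \min(A \triangle P)$, one has $e \in A \setminus P$. In the latter case $A \cap [2, e-1] = P \cap [2, e-1]$, and I will call $e$ the \emph{discrepancy point} of $A$ (with the convention $e = n+1$ when $A \supsetneq P$, so that all cases are packaged uniformly). The analogous description yields a discrepancy point $f$ for any $B \in \mathcal{L}(Q, b)$. The entire proof will amount to tracking these two discrepancy points against the integer $j$ from the definition of strong intersection.

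For $(\Leftarrow)$, fix a witness $j$ of strong intersection, so $j \in P \cap Q$, $P \cap Q \cap [2, j] = \{j\}$, and $P \cup Q \supset [2, j]$. For any $A \in \mathcal{L}(P, a)$ with discrepancy point $e$, observe that if $e > j$ then $A \supset P \cap [2, j] \ni j$, whereas if $e \le j$ then in fact $e < j$ (since $j \in P$ but $e \in A \setminus P$), and so the covering condition forces $e \in Q$ (as $e \in [2, j-1] \subset P \cup Q$ and $e \notin P$). The symmetric dichotomy holds for $B$. A short four-case analysis then yields $A \cap B \ne \emptyset$: if both $e, f > j$, then $j \in A \cap B$; if exactly one, say $f$, is small, then $f \in B \cap P$ with $f < j$, while $A \cap [2, e-1] = P \cap [2, e-1]$ contains $f$, giving $f \in A \cap B$; and if both are small, I may assume WLOG $e < f$ (equality being inconsistent with $e \in Q$ and $f \notin Q$), and then $e \in Q \cap [2, f-1] = B \cap [2, f-1]$, so $e \in A \cap B$.

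For $(\Rightarrow)$, I argue contrapositively and explicitly exhibit disjoint $A \in \mathcal{L}(P, a)$ and $B \in \mathcal{L}(Q, b)$ whenever $P, Q$ fail to strongly intersect. Since the only candidate witness is $j := \min(P \cap Q)$ (which automatically satisfies $P \cap Q \cap [2, j] = \{j\}$), failure splits into two cases. If $P \cap Q = \emptyset$, I set $A = P \cup L_A$, $B = Q \cup L_B$ with disjoint fillers $L_A, L_B \subset [2, n] \setminus (P \cup Q)$ of sizes $a - |P|$ and $b - |Q|$; the inclusions $A \supset P$ and $B \supset Q$ grant membership. If instead $P \cap Q \ne \emptyset$ but there exists $i \in [2, j-1] \setminus (P \cup Q)$, I set $A = (P \cap [2, i-1]) \cup \{i\} \cup L_A$ (so that $A$ has discrepancy point exactly $i$, hence $A \in \mathcal{L}(P, a)$) and $B = Q \cup L_B$, where $L_A, L_B$ are now disjoint subsets of $[2, n] \setminus \bigl((P \cap [2, i-1]) \cup \{i\} \cup Q\bigr)$. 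Disjointness of $A$ and $B$ follows in either case: the non-filler portions cannot collide because $\min(P \cap Q) = j > i - 1$ forces $(P \cap [2, i-1]) \cap Q = \emptyset$, while $i \notin Q$ and the fillers are chosen outside everything relevant.

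The main obstacle is really just Step 1: packaging the definition of $\prec$ so that the discrepancy-point comparison in Step 2 is uniform across the degenerate cases $A = P$, $A \supsetneq P$, and $A \not\supset P$. Once the characterization is in hand, Steps 2 and 3 are mechanical, and the only arithmetic check is the budget inequality $|L_A| + |L_B| \le |[2, n] \setminus (\text{forbidden})|$ in the filler construction; after cancellation this reduces exactly to the hypothesis $n - 1 \ge a + b$, confirming that the constructed $A, B$ exist.
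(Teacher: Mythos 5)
Your proof is correct. Note that the paper states Proposition~\ref{prop9} as a quoted result from \cite{FK1} and does not give a proof of it, so there is no in-paper argument to compare against; what follows is an assessment of your argument on its own merits. The reformulation of $\prec$ via the discrepancy point $e=\min(A\triangle P)$ (with $e\in A\setminus P$ characterizing $A\prec P$, $A\ne P$) is accurate, and the three-way case analysis in the forward direction correctly uses the covering property $P\cup Q\supset[2,j]$ together with $A\cap[2,e-1]=P\cap[2,e-1]$ and its analogue for $B$ to produce an explicit common element in every case. The contrapositive direction correctly identifies $j=\min(P\cap Q)$ as the only possible witness, and both constructions are sound: the fillers can be chosen because $|P\cap[2,i-1]|\le|P|-1\le a-1$ (since $j\in P\setminus[2,i-1]$), and the budget check indeed reduces to $n-1\ge a+b$ after cancellation. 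One small imprecision worth flagging: the parenthetical ``convention $e=n+1$ when $A\supsetneq P$'' is unnecessary and slightly off, since when $A\supsetneq P$ the quantity $\min(A\triangle P)=\min(A\setminus P)$ is a genuine element; the convention is only needed for $A=P$. This does not affect the argument because $A\supseteq P$ already gives $A\supset P\cap[2,j]$ directly in the relevant case, but cleaning up that sentence would avoid confusion.
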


Now let us deduce Theorem~\ref{thmfull1} from Theorem~\ref{thmfull2}. %\textcolor{red}{Exchange the roles of $T$ and $S$ here?}

\begin{proof}[Reduction of Theorem~\ref{thmfull1} to Theorem~\ref{thmfull2}]
 Let us first compute the size of the family $\mathcal L(T, k-1)$ for a given set $T\in [2,n]$. Assume that $b_{s_b}$ is the largest element of $T$ and that $[2,b_{s_b}]\setminus T= \{b_1,\ldots, b_{s_b-1}\}$. Take the smallest element $b_1\in [2,n]\setminus T$ and consider the family with characteristic set $T_1:=(T\cap [b_1])\cup \{b_1\}$. Note that $T_1 = [2,b_1]$ and thus the size of this family is ${n-b_1\choose k-b_1+1} = {n-b_1\choose n-k-1}$. Since $T_1\prec T$, this family is a subfamily of  $\mathcal L(T, k-1)$. Proceeding iteratively, at each step take the next smallest (not chosen yet) element $b_i$ from $[2,n]\setminus T$ and define the set $T_i:=(T\cap [b_i])\cup b_i$. Again, $T_i\prec T$. Count the sets that belong to $\mathcal L(T_i,k-1)\setminus L(T_{i-1},k-1)=\big\{F\in {[2,n]\choose k}\ :\ F\cap [b_i]=T_i\big\}$. Their number is precisely ${n-b_i\choose k-|T_i|} = {n-b_i\choose n-k-|[b_i]\setminus T_i|}$. Since we ``stop'' at every element that is not included in $T$, we get that $|[b_i]\setminus T_i| = |[b_{i-1}\setminus T_{i-1}|+1 = \ldots = i$. Therefore,  ${n-b_i\choose n-k-|[b_i]\setminus T_i|}={n-b_i\choose n-k-i}$. We stop the procedure at the point when $T_i = T$, including the sets $F\in {[2,n]\choose k-1}$ that satisfy $F\cap [b_{s_b}] = T$ in the count. It should be clear that, in this counting procedure, we counted each set from $\mathcal L(T,k-1)$ exactly once. We get that
$$|\mathcal L(T,k-1)| = {n-b_1\choose n-k-1}+{n-b_2\choose n-k-2}+\ldots +{n-b_{s_b}\choose n-k-s_b},$$
and the displayed formula gives the representation of $|\mathcal L(T,k-1)|$ in the $(n-k-1)$-cascade form! Moreover, we conclude that the set $\{b_1,\ldots, b_{s_b}\}$ is exactly the set $T_{\gamma}$ for $\gamma:=|\mathcal L(T,k-1)|$ (cf. the paragraph above Definition~\ref{defresnu}). We have $T_{\gamma} = ([2,b_{s_b}]\setminus T)\cup \{b_{s_b}\}$ and thus $T = S_{\gamma}$.

Therefore, if $S,T$ is a resistant pair, then, putting $\gamma:=|\mathcal L(T,k-1)|$, we get that $T = S_{\gamma}$ and $S\cap [2,n] = T_{\gamma}$. This immediately implies that $T_{\gamma}$ and $S_{\gamma}$ satisfy condition (1) of Definition~\ref{defresnu}. The implication in the other direction follows as well.  Condition (2) of Definition~\ref{defrespa} is equivalent to the  statement that $1+|T_{\gamma}\cap [i]|<|[2,i]\setminus T_{\gamma}|$  for each $i\ge 4$, which, in turn, is equivalent to $b_l>2l+2$ for each $l\ge 1$. Finally, it is clear that $\gamma = {n-4\choose k-3}$ correspond to the characteristic set $\{2,3,4\}$.

We conclude that $T_l$ and $S_l$ form a resistant pair if and only if $|\mathcal L(T_l,k-1)|$ is a resistant number. Doing calculations as above, one can conclude that
$$|\mathcal L(S_l,k)| = {n-a_1\choose n-k}+{n-a_{2}\choose n-k-1}+\ldots +{n-a_{s_a}\choose n-k-s_a+1},$$
where $a_i$ are as in the statement of Theorem~\ref{thmfull1}.
Given that, it is clear that the inequality \eqref{eqfull2} is equivalent to the statement saying that the right hand side of \eqref{eqfull1} is strictly monotone, and that  \eqref{eqfull1} is equivalent to \eqref{eqfull3}.

Finally, the sharpness claimed in Theorem~\ref{thmfull1} immediately follows from the remark in the paragraph after Theorem~\ref{thmfull2}.
\end{proof}

\subsection{Proof of Theorem~\ref{thmfull2}} We say that $\mathcal A\subset {[2,n]\choose a}$ and $\mathcal B\subset {[2,n]\choose b}$ form a \underline{maximal} \underline{cross-intersecting pair}, if, whenever $\mathcal A'\subset {[2,n]\choose a}$ and $\mathcal B'\subset {[2,n]\choose b}$ are cross-intersecting with $\mathcal A'\supset \mathcal A$ and $\mathcal B'\supset \mathcal B$, then necessarily $\mathcal A = \mathcal A'$ and $\mathcal B = \mathcal B'$ holds.

The following proposition from \cite{FK1} is another important step in our analysis.

\begin{prop}[\cite{FK1}]\label{cross2} Let $a$ and $b$ be positive integers, $a+b\le n-1$. Let $P$ and $Q$ be non-empty subsets of $[2,n]$ with $|P|\le a$, $|Q| \le b$. Suppose that $P$ and $Q$ strongly intersect in their largest element, that is, there exists $j$ such that $P\cap Q = \{j\}$ and $P\cup Q = [2,j]$. Then $\mathcal L([2,n],P,a)$ and $\mathcal L([2,n],Q,b)$ form a maximal pair of cross-intersecting families.

Inversely, if $\mathcal L([2,n],m,a)$ and $\mathcal L([2,n],r,b)$ form a maximal pair of cross-intersecting families, then there exist sets $P$ and $Q$ that strongly intersect in their largest element, such that $\mathcal L([2,n],m,a)=\mathcal L([2,n],P,a)$, $\mathcal L([2,n],r,b)=\mathcal L([2,n],Q,b)$. \end{prop}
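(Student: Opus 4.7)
The plan is to leverage Proposition~\ref{prop9} (cross-intersecting iff strong intersection) as the main tool. For the forward direction, I would start by observing that if $P, Q$ strongly intersect in their largest element $j$ (so $P\cap Q=\{j\}$ and $P\cup Q=[2,j]$), then Proposition~\ref{prop9} already gives that $(\mathcal L(P,a), \mathcal L(Q,b))$ is cross-intersecting. To establish maximality, I would show: for every $A\in\binom{[2,n]}{a}$ with $A\succ P$, the $\prec$-smallest (i.e.\ lex-smallest) $b$-set $B$ inside $[2,n]\setminus A$ belongs to $\mathcal L(Q,b)$. The set $B$ exists because $|[2,n]\setminus A|=n-1-a\ge b$, and it is automatically disjoint from $A$. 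By symmetry of the hypothesis in $P$ and $Q$, the same argument (with roles swapped) disposes of any $B\succ Q$, completing maximality.

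The verification that $B\prec Q$ is the technical heart. Let $i_0:=\min(P\setminus A)$; as usual, $A\succ P$ forces $i_0\in P\setminus A$ and $A\cap[2,i_0-1]=P\cap[2,i_0-1]$. Because $P\setminus\{j\}$ and $Q\setminus\{j\}$ partition $[2,j-1]$, this gives $[2,i_0-1]\setminus A=Q\cap[2,i_0-1]$, so these elements, being the smallest of $[2,n]\setminus A$, all lie in $B$. A sanity check shows $|Q\cap[2,i_0-1]|\le b-1$ (otherwise $Q\subset[2,i_0-1]$ would exclude $j\in Q$, contradicting $i_0\le j$), hence the next smallest element $i_0$ of $[2,n]\setminus A$ is also in $B$. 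Now split: if $i_0=j$, then $B\supset (Q\cap[2,j-1])\cup\{j\}=Q$, giving $B\prec Q$; if $i_0<j$, then $i_0\in P\setminus\{j\}$ so $i_0\notin Q$, which forces $\min(B\setminus Q)\le i_0$, while $Q\cap[2,i_0-1]\subset B$ forces either $B\supset Q$ or $\min(Q\setminus B)\ge i_0+1$, again yielding $B\prec Q$.

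For the converse, I would take a maximal cross-intersecting pair $(\mathcal L([2,n],m,a),\mathcal L([2,n],r,b))$ and choose any characteristic sets $P_0,Q_0$ (with $|P_0|\le a$, $|Q_0|\le b$) realizing these two families. Proposition~\ref{prop9} provides some $j$ with $P_0\cap Q_0\cap[2,j]=\{j\}$ and $P_0\cup Q_0\supset[2,j]$. Setting $P:=P_0\cap[2,j]$ and $Q:=Q_0\cap[2,j]$ yields $P\cap Q=\{j\}$ and $P\cup Q=[2,j]$, so $P,Q$ strongly intersect in their largest element. Shrinking $P_0$ to $P$ \emph{enlarges} the family ($\mathcal L(P,a)\supset\mathcal L(P_0,a)$, since supersets precede in $\prec$), while Proposition~\ref{prop9} applied to $(P,Q_0)$ (which still strongly intersect at $j$) keeps it cross-intersecting with $\mathcal L(Q_0,b)$. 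Maximality then forces $\mathcal L(P,a)=\mathcal L(P_0,a)$; symmetrically $\mathcal L(Q,b)=\mathcal L(Q_0,b)$.

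The main obstacle is the case analysis for $B\prec Q$ in the forward direction: showing $i_0\in B$ relies on the numerical bound $|Q\cap[2,i_0-1]|\le b-1$ (hence on the hypothesis $|Q|\le b$), and the split $i_0=j$ vs.\ $i_0<j$ requires separate but parallel treatments. The converse is subtler in a different way, hinging on the perhaps counterintuitive fact that shrinking the characteristic set enlarges the associated $\mathcal L$-family, so that the ``right'' characteristic sets are the trimmed ones and maximality is precisely what pins them down.
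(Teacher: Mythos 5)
Your proposal is correct, and it is worth noting that the paper itself offers no proof of this proposition: it is imported verbatim from \cite{FK1}, so there is nothing internal to compare against. Your argument is a valid self-contained verification. The forward direction is sound: for $A\notin\mathcal L(P,a)$, setting $i_0=\min(P\setminus A)$ gives $A\cap[2,i_0-1]=P\cap[2,i_0-1]$, and since $P\setminus\{j\}$, $Q\setminus\{j\}$ partition $[2,j-1]$, the elements of $[2,n]\setminus A$ below $i_0$ are exactly $Q\cap[2,i_0-1]$, of size at most $b-1$ because $j\in Q$ and $i_0\le j$; hence your witness $B$ (the $b$ smallest elements of $[2,n]\setminus A$, which exists precisely because $a+b\le n-1$) contains $Q\cap[2,i_0-1]\cup\{i_0\}$, and the two cases $i_0=j$, $i_0<j$ correctly yield $B\prec Q$, so $B\in\mathcal L(Q,b)$ is disjoint from $A$; the hypotheses are symmetric in $(P,a)$ and $(Q,b)$, so maximality follows. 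The converse is also correct: the lex-last members serve as characteristic sets $P_0,Q_0$ of sizes exactly $a,b$, Proposition~\ref{prop9} gives a $j$ witnessing strong intersection, the trimmed sets $P=P_0\cap[2,j]$, $Q=Q_0\cap[2,j]$ strongly intersect in their common largest element $j$, and since $P\subset P_0$ implies $\mathcal L(P,a)\supset\mathcal L(P_0,a)$ while $(P,Q_0)$ (resp.\ $(P_0,Q)$) still strongly intersect at $j$, maximality pins down $\mathcal L(P,a)=\mathcal L(P_0,a)$ and $\mathcal L(Q,b)=\mathcal L(Q_0,b)$. The only unstated point is that both families must be nonempty (otherwise no characteristic set of size at most $a$, resp.\ $b$, exists and no strongly intersecting pair can represent the empty family), but this is implicit in the proposition's formulation, so it is not a gap in your argument.
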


Recall that we aim to maximize $|\aaa|+|\bb|$ given a lower bound on $|\bb|$. The proof is based on the following two lemmas. %\textcolor{red}{What to do with the case when $S=[1,k]?$}

\begin{lem}\label{lemdiv} Consider a pair of cross-intersecting families $\aaa\subset {[2,n]\choose k-1},\ \bb\subset {[2,n]\choose k}$. Suppose that $\aaa = \mathcal L(S,k-1),\ \bb=\mathcal L(T,k)$ for some sets $S\subset [n]$, $T\subset [2,n]$ that strongly intersect in their last element $j$. Suppose also that $T\precneqq \{2,3,4\}$.

Assume that $S$ and $T$ do not form a resistant pair, that is, there exists $5\le i\le j$, such that $\big|[i]\cap S\big|\ge \big|[i]\setminus S\big|$. Put $T':=[i]\setminus S$ and choose $S'$ so that it strongly intersects with $T'$ in its largest element. Then the families $\aaa'\subset {[2,n]\choose k-1},\ \bb'\subset {[2,n]\choose k}$ with characteristic sets $S', T'$ are cross-intersecting and  satisfy $|\aaa'|+|\bb'|\ge |\aaa|+|\bb|$ and $|\bb'|> |\bb|$.

Moreover, if $\big|[i]\cap S\big|>\big|[i]\setminus S\big|$ then  $|\aaa'|+|\bb'|> |\aaa|+|\bb|$.
\end{lem}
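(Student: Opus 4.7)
The proof has three natural parts. First, the cross-intersection of $\aaa'$ and $\bb'$ is immediate from Proposition~\ref{cross2}, since by construction $S'\cap T'=\{l\}$ and $S'\cup T'=[2,l]$ for $l=\max T'$; the size conditions required by Proposition~\ref{cross2}, namely $|S'\cap[2,n]|=l-r\le i-r\le k-1$ and $|T'|\le k$, follow from $i-r\ge r$ together with $i\le j\le 2k-1$ (inherited from $|S|,|T|\le k$ in the original pair).

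Second, to establish $|\bb'|>|\bb|$, use $1\in S$ and $[2,i]\subset[2,j]=S\cup T$ to rewrite $T'=[i]\setminus S=T\cap[2,i]$; this is a proper subset of $T$ since $j\notin T'$ in either case ($i<j$ gives $j\notin[2,i]$, while $i=j$ forces $j\in S\cap T$, hence $j\notin [i]\setminus S$). The containment rule of $\prec$ then yields $T\prec T'$ strictly, so $\mathcal L(T,k)\subsetneq\mathcal L(T',k)$, i.e., $|\bb'|>|\bb|$.

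Third, for the main inequality $|\aaa'|+|\bb'|\ge|\aaa|+|\bb|$, write $T=\{t_1<\cdots<t_p\}$ with $t_p=j$, $T'=\{t_1,\ldots,t_r\}$, $l:=t_r$, and $[2,j]\setminus T=\{h_1<\cdots<h_{q-1}\}$. Using the cascade-form identities derived in the reduction of Theorem~\ref{thmfull1} to Theorem~\ref{thmfull2}, one obtains
\[|\aaa|-|\aaa'|=\sum_{m=r+1}^{p}\binom{n-t_m}{n-k-m+1},\]
while $|\bb'|-|\bb|$ decomposes as the ``last-element'' term $\binom{n-l}{n-k-(l-r)}$ (present only in $|\bb'|$) that replaces both the hole term $\binom{n-h_{l-r}}{n-k-(l-r)}$ and the $j$-term of $|\bb|$, with the intermediate hole terms of $T$ in $(l,j)$ omitted. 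The plan is to apply Pascal's identity
\[\binom{n-l}{n-k-(l-r)}=\binom{n-l-1}{n-k-(l-r)}+\binom{n-l-1}{n-k-(l-r)-1}\]
iteratively to ``slide'' the $l$-term up through $t_{r+1},\ldots,t_p$, matching each summand of $|\aaa|-|\aaa'|$ against a corresponding contribution in $|\bb'|-|\bb|$. The hypothesis $|[i]\cap S|\ge|[i]\setminus S|$, which reads $r\le i-r$, provides just enough ``headroom'' for the slide to close the inequality; a strict version of that hypothesis yields a strict surplus and therefore $|\aaa'|+|\bb'|>|\aaa|+|\bb|$.

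The main obstacle is the bookkeeping in the third step: one must carefully track the ranges of hole indices appearing in each cascade and handle the boundary case $i=j$ separately, where $T'=T\setminus\{j\}$ and the new last element becomes $l=t_{p-1}$ rather than $l=t_r$ for $r<p$. A cleaner organizing principle may be to remove the trailing elements of $T$ one at a time, bound the effect on $|\aaa|+|\bb|$ of each single-element shift directly using $n>2k$, and sum the individual changes; the constraint $r\le i-r$ then controls the aggregate.
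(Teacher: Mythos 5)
Your first two steps (cross-intersection of $\aaa',\bb'$ via Proposition~\ref{cross2}, and $|\bb'|>|\bb|$ from $T'=T\cap[2,i]\subsetneq T$) are correct and agree with the paper. The problem is the third step, which carries the entire content of the lemma: what you give is a plan with the decisive computation left open (you say yourself that ``the main obstacle is the bookkeeping''), and the plan points in a harder direction than necessary. The idea you are missing is that one should not expand $|\aaa|-|\aaa'|$ and $|\bb'|-|\bb|$ into multi-term cascade sums at all. Because $S'$ and $S\cap[i]$ are \emph{consecutive} in the lexicographic order on subsets of $[i]$, each difference is a single slice: $\aaa\setminus\aaa'=\mathcal P_a:=\{P:P\cap[i]=[2,i]\cap S\}$ and $\bb'\setminus\bb=\mathcal P_b\setminus\bb$ with $\mathcal P_b:=\{P:P\cap[i]=[i]\setminus S\}$, and these slices have sizes ${n-i\choose s_a}$ and ${n-i\choose s_b}$ for $s_a=k-|[i]\cap S|$, $s_b=k-|[i]\setminus S|$. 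The hypothesis $|[i]\cap S|\ge|[i]\setminus S|$ gives $s_b\ge s_a$, and $n-i>s_a+s_b$ then gives $|\mathcal P_b|\ge|\mathcal P_a|$ at once --- no Pascal sliding is needed. A second point your plan omits entirely: $\bb$ may already contain some sets of $\mathcal P_b$, so $|\bb'|-|\bb|=|\mathcal P_b|-|\bb\cap\mathcal P_b|$, and the inequality to prove is $|\aaa\cap\mathcal P_a|+|\bb\cap\mathcal P_b|\le|\mathcal P_b|$. The paper gets this by noting that the bipartite graph on $\mathcal P_a\cup\mathcal P_b$ with edges between disjoint sets is biregular with positive degree, so its largest independent set is a full part, while $(\aaa\cap\mathcal P_a)\cup(\bb\cap\mathcal P_b)$ is independent by cross-intersection. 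Without some such device your accounting of $|\bb'|-|\bb|$ is not even set up correctly, and a purely term-by-term matching of cascade summands is not established (and need not hold summand by summand).

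Your fallback suggestion --- removing the trailing elements of $T$ one at a time and bounding each single-element step --- also does not work as stated: the hypothesis $|[i]\cap S|\ge|[i]\setminus S|$ is available only at the one index $i$, and gives no control over the corresponding quantities at the intermediate indices $t_{r+1},\dots,t_p$, so the individual steps may each move $|\aaa|+|\bb|$ the wrong way even though the aggregate does not. This is precisely why the paper performs the truncation at $i$ in a single exchange. (A stepwise argument of the kind you describe does appear in the paper, but only in Lemma~\ref{lemres}, Case~2, where resistance of the target pair guarantees the needed inequality at every intermediate index.)
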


\begin{proof}[Proof of Lemma~\ref{lemdiv}] First, recall that $1\in S$. Since $S'$ and $T'$ are strongly intersecting, the families $\aaa'$, $\bb'$ are cross-intersecting. Next, clearly, $T'\subsetneq T$ and thus $\bb'\supsetneq \bb$. Therefore, we only have to prove that $|\aaa|+|\bb|\le |\aaa'|+|\bb'|$, and that the inequality is strict in the case indicated in the lemma.

Consider the following families:
\begin{align*}
\mathcal P_a:=&\big\{P\in {[2,n]\choose k-1}\ :\  P\cap [i] = [2,i]\cap S\big\},\\
\mathcal P_b:=&\big\{P\in {[2,n]\choose k}\ :\  P\cap [i] = [i]\setminus S\big\}.
\end{align*}
We have $\aaa\setminus \mathcal P_a = \aaa',$ $\bb\cup \mathcal P_b = \bb'$. Both equalities are proved in the same way, so let us show, e.g., the first one.  We have $S'\prec S\prec S\cap [i]$, therefore $\aaa'\subset \aaa\subset \mathcal L(S\cap [i],k-1)$. On the other hand, we claim that $S'$ and $S\cap [i]$ are two consecutive sets in the lexicographic order on $[i]$. Indeed, assume that the largest element of $T'$ is $j'$.
If $j' = i$, then $S'\supset S\cap [i]$, $(S'\setminus S)\cap [i] = \{i\}$, which proves it in this case. If $j'<i$, then $[j'+1,i]\subset S\cap [i]$, $j'\notin S\cap [i]$. It is easy to see that the set that precedes $S\cap [i]$ in the lexicographic order on $[i]$ ``replaces'' $[j'+1,i]$ with $\{j'\}$, that is, it is $S'$. Therefore,
$$\aaa \setminus \aaa' \subset \mathcal L(S\cap [i],k-1)\setminus \aaa' = \mathcal L(S\cap [i],k-1)\setminus \mathcal L(S',k-1) = \mathcal P_a,$$ which, together with the fact that $\mathcal P_a$ and $\aaa'$ are disjoint, is equivalent to the equality we aimed to prove.

Next, consider the bipartite graph $G$ with parts $\mathcal P_a,\mathcal P_b$ and edges connecting disjoint sets. Then, due to the fact that $\aaa$ and $\bb$ are cross-intersecting, $(\aaa\cap \mathcal P_a)\cup (\bb\cap \mathcal P_b)$ is an independent set in $G$.

The graph $G$ is biregular, and therefore the largest independent set in $G$ is one of its parts. We have $|\mathcal P_a| = {n-i\choose s_a}$, $|\mathcal P_b| = {n-i\choose s_b}$, where $s_a = k-|[i]\cap S|,\ s_b = k-|[i]\setminus S|$. By the condition from the lemma, we have $s_b\ge s_a$, and, since $n-i> s_a+s_b$, we have $|\mathcal P_b|\ge |\mathcal P_a|$. We conclude that $|\mathcal P_b|$ is the largest independent set in $G$, so $|\mathcal P_b|\ge (\aaa\cap \mathcal P_a)\cup (\bb\cap \mathcal P_b),$ and therefore
$$|\aaa'|+|\bb'|-(|\aaa|+|\bb|) = |\mathcal P_b|-|\aaa\cap \mathcal P_a|-|\bb\cap \mathcal P_b|\ge 0.$$
If  $\big|[i]\setminus S\big|< \big|S\cap [i]\big|$, then $|\mathcal P_b|> |\mathcal P_a|$ and $\mathcal P_b$ is the {\it unique} independent set of maximal size in $G$. Thus, we have strict inequality in the displayed inequality above.
\end{proof}

Slightly abusing notation, we say that $\aaa, \bb$ form a resistant pair if the corresponding characteristic sets form a resistant pair. The second lemma describes how do the resistant pairs behave.   More specifically, it shows that \eqref{eqfull2} holds: the sum of sizes of resistant cross-intersecting families increases as the size of the second family decreases.

%We call a pair $\aaa$, $\bb$ of cross-intersecting families as above {\it resistant}, if they have characteristic sets $S, T$, respectively, that strongly intersect in their last element $j$ and that satisfy the condition $\big|[i]\setminus S\big|> \big|S\cap [i]\big| $  (or, equivalently, $\big|([i]\cap T)\setminus \{j\}\big|> \big|([i]\setminus (T\setminus \{j\})\big| $) for each $i =3,\ldots, j$.

\begin{lem}\label{lemres} Consider a resistant pair of cross-intersecting families $\aaa\subset{[2,n]\choose k-1},\ \bb\subset {[2,n]\choose k}$, with characteristic sets $S\subset [n],T\subset [2,n]$, respectively, and another such resistant pair $\aaa'\subset{[2,n]\choose k-1},\ \bb'\subset {[2,n]\choose k}$ with characteristic sets $S'\subset [n],T'\subset [2,n]$.
%Assume also that $T\ne T'$.

If $T'\precneqq T$, then $|\bb'|< |\bb|$ and $|\aaa|+|\bb|<|\aaa'|+|\bb'|$.
\end{lem}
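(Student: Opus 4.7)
The plan is to reduce Lemma~\ref{lemres} to the case of \emph{adjacent} resistant pairs and then establish each one-step inequality via the cascade-form description of $|\mathcal{L}(T,k)|$ and $|\mathcal{L}(S,k-1)|$ (as derived in the reduction of Theorem~\ref{thmfull1} to Theorem~\ref{thmfull2}), combined with a bipartite switching argument in the spirit of Lemma~\ref{lemdiv}. By transitivity of strict inequalities, it suffices to consider pairs $(S',T')$ and $(S,T)$ for which no resistant $T''$ satisfies $T' \precneqq T'' \precneqq T$; an arbitrary instance of $T'\precneqq T$ is then obtained by concatenating a finite chain of such adjacent transitions, and summing the one-step strict inequalities delivers the full statement.

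The first conclusion $|\bb'|<|\bb|$ is immediate: $T'\precneqq T$ yields $\mathcal{L}(T',k)\subseteq \mathcal{L}(T,k)$, and the map from a resistant characteristic set to its $(n-k-1)$-cascade form is injective (the cascade form is recoverable from the set and vice versa), so the inclusion is strict. For the second conclusion it suffices to show $\Delta_\aaa := |\aaa'| - |\aaa| > |\bb| - |\bb'| =: \Delta_\bb$. To do this I would mimic the switching argument from the proof of Lemma~\ref{lemdiv}: identify the sets gained in $\aaa$ during the transition (that is, $\aaa' \setminus \aaa$) and those lost from $\bb$ (that is, $\bb \setminus \bb'$) with the two sides $\mathcal{P}_a, \mathcal{P}_b$ of a biregular bipartite graph whose edges encode disjointness, so that cross-intersection corresponds to an independent-set condition. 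The resistance inequality $b_i > 2i+2$ in Definition~\ref{defrespa} translates precisely into the strict domination $|\mathcal{P}_a| > |\mathcal{P}_b|$, and an argument identical to Lemma~\ref{lemdiv}'s then yields $\Delta_\aaa > \Delta_\bb$ without explicit binomial manipulation.

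The main obstacle is to formulate the ``adjacent resistant pair'' relation cleanly enough for the switching argument to apply uniformly. Condition~(2) of Definition~\ref{defrespa} imposes a delicate structural constraint, and when the adjacent step forces a reshaping of several trailing elements of $T'$ at once (rather than merely advancing $\max T'$), the associated bipartite graph and its two sides $\mathcal{P}_a,\mathcal{P}_b$ must be set up with care. A secondary subtlety is the base case $T=\{2,3,4\}$, $S=\{1,4\}$, which is excluded from the recursive form of condition~(2) and must be treated separately; fortunately it has a trivial structure, so the switching argument goes through by direct inspection.
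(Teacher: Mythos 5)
Your toolbox is the right one (biregular bipartite disjointness graphs, independent sets, reduction to comparisons of lex families), and the reduction to adjacent resistant pairs by transitivity is legitimate in principle. But the proposal leaves unresolved exactly the step that constitutes the proof, and the way you sketch it would not go through. First, even for two consecutive resistant sets $T'\precneqq T$ in the ordering, the lexicographic/containment order does not force $T\subset T'$, so the difference families $\aaa'\setminus\aaa$ and $\bb\setminus\bb'$ are in general \emph{not} of the form ``all sets with a prescribed trace on an initial segment $[i]$'' for a single $i$; hence they cannot be identified with the two sides of one biregular graph. The paper handles this with a preliminary case ($T'\nsupseteq T$): it locates the smallest $i\ge 5$ where $T$ and $T'$ disagree, replaces $T$ by $T'':=T'\cap[i]$ via one switching step (with the roles of $S$ and $T$ swapped relative to Lemma~\ref{lemdiv}), and only then is reduced to the containment case $T\subset T'$.

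Second, in the containment case the transition from $T$ to $T'$ must be decomposed into a \emph{chain} of switching steps, one for each element $t_1<\dots<t_l$ of $T'\setminus T$, passing through intermediate characteristic sets that need not themselves be resistant. Your claim that the resistance condition ``translates precisely into the strict domination $|\mathcal P_a|>|\mathcal P_b|$'' is false at the intermediate steps: the paper only gets $|[t_i]\setminus S_i|\ge|[t_i]\cap S_i|$ there (so the two parts may have equal size and the step yields only $|\aaa_i|+|\bb_i|\ge|\aaa_{i-1}|+|\bb_{i-1}|$), and strictness is secured solely at the final step $i=l$. Establishing the intermediate inequalities requires the auxiliary observation $|[t_i]\setminus S'|\ge|[t_i]\cap S'|+2$ for $i\in[l-1]$, which you do not derive. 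Since your summation-of-strict-inequalities scheme presupposes strictness at every step, and since the single-graph-per-adjacent-transition setup does not exist, the argument as proposed has a genuine gap precisely at the point you flag as ``the main obstacle.''
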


Therefore, while for general lexicographic pairs of families the sum of sizes is not monotone w.r.t. the size of the second family, it is monotone for resistant pairs.

Remark that, since $T'\precneqq T$, we have $T'\ne \{2,3,4\}$ and thus $S',T'$ must satisfy Condition (2) of Definition~\ref{defrespa}. We also note that we do not use the property that $S,T$ form a resistant pair. The proof also works for $T' = T_0$ (recall that $T_0 =[2,n]$).

The proof of this lemma is based on biregular bipartite graphs and is very similar to the proof of Lemma~\ref{lemdiv}, although is a bit trickier.

\begin{proof}[Proof of Lemma~\ref{lemres}] First, it is clear that, in the conditions of the lemma, we have $|\bb'|< |\bb|$. The rest of the proof is concerned with the inequality on the sums of sizes. We  consider two cases depending on how do the sets $T$ and $T'$ relate.\\

\textbf{Case 1. \pmb{$T'\nsupseteq T$.} } Note that this condition, in particular, implies that $T\ne \{2,3,4\}$.  Find the smallest $i\ge 5$, such that exactly one of the sets $T,T'$ contain $i$. Since $T'\prec T$, we clearly have $i\in T',\ i\notin T$. Consider the set $T'' = T'\cap [i]$. Then we clearly have $T'\precneqq T''\precneqq T$ and $T''\subset T'$. Accordingly, put $S''$ to be $\{i\}\cup ([i]\setminus T'')$, and consider the cross-intersecting families $\aaa''\subset {[2,n]\choose k-1},\ \bb''\subset {[2,n]\choose k}$, which have characteristic vectors $S''$ and $T''$, respectively.
Note that $S''=S\cap [i]$ and thus the pair $\aaa'', \bb''$ is resistant.
We claim that $|\aaa''|+|\bb''|> |\aaa|+|\bb|.$

We prove the inequality above as in Lemma~\ref{lemdiv}, but the roles of $S$ and $T$ are now switched. Consider the bipartite graph $G$ with parts

\begin{align*}
\mathcal P_a:=&\big\{P\in {[2,n]\choose k-1}\ :\  P\cap [i] = [2,i]\setminus T\big\},\\
\mathcal P_b:=&\big\{P\in {[2,n]\choose k}\ :\  P\cap [i] = [i]\cap T\big\},
\end{align*}
and edges connecting disjoint sets.
Similarly, we have $\aaa\cup \mathcal P_a = \aaa'',$ $\bb\setminus \mathcal P_b = \bb''$. Indeed, let us verify, e.g., the second equality. All $k$-element sets $P$ such that $P\prec T''$ are in $\bb$ and in $\bb\setminus \mathcal P_b$, as well as in $\bb''$, since $T''\precneqq T\cap [i]$. On the other hand, if we restrict to $[i]$, the sets $T\cap [i]$ and $T''$ are consecutive in the lexicographic order, and so any set $B$ from $\bb$ such that $B\succneqq T''$ must satisfy $B\cap [i] = T\cap [i]$. Therefore, $\bb\setminus \bb''\subset \mathcal P_b$ and $\bb\setminus \mathcal P_b = \mathcal B''$.

The families $\aaa$ and $\bb$ are cross-intersecting, and so the set $(\aaa\cap \mathcal P_a)\cup (\bb\cap \mathcal P_b)$ is independent in $G$. On the other hand, the largest independent set in $G$ has size $\max\{|\mathcal P_a|, |\mathcal P_b|\}$. Since the pair $\aaa',$ $\bb'$ is resistant, we have that $|[i]\cap T|=|[i]\setminus S''| > |[i]\cap S''|=|[i]\setminus T|$, which implies $|\mathcal P_a|={n-i\choose k-|[i]\setminus T|}> {n-i\choose k-|[i]\cap T|} = |\mathcal P_b|$, and thus $\mathcal P_a$ is the (unique) largest independent set in $G$. We have

$$|\aaa''|+|\bb''|-(|\aaa|+|\bb|) = |\mathcal P_a|-|\aaa\cap \mathcal P_a|-|\bb\cap \mathcal P_b|>0,$$
and the desired inequality is proven. Therefore, when comparing $T'$ and $T$, we may replace $T$ with $T''$, or rather assume that $T\subset T'$. We have reduced Case 1 to the following case.\\

\textbf{Case 2. \pmb{$T'\supset T$.}\footnote{We note that, in this case, we do not use the fact that $\aaa,\bb$ form a resistant pair (and thus the proof works for $T=\{2,3,4\}$).} }  Assume that $t_1<t_2<\ldots <t_l$ are the elements forming the set $T'\setminus T$ and let $t_0$ be the largest element of $T$. Let us first show that, for each $i\in [l-1]$, we have  $$|[t_i]\setminus S'|\ge |[t_i]\cap S'|+2.$$ Indeed, find the largest element $t'<t_i,$ such that $t'\in S'$. We have $|[t_i]\setminus S'|>|[t']\setminus S'|>|[t']\cap S'|=|[t_i]\cap S'|$. For  $i=0,\ldots, l$, put $S_i:=[t_i]\cap S'\cup \{t_i\}$ and $T_i:=T'\cap [t_i]$. Observe that, for each $i\in[l]$, we have $|[t_i]\setminus S_i|\ge |[t_i]\cap S_i|$ by the displayed inequality. Moreover, the inequality is strict for $i=l$. For each $i=0,\ldots, l$, let $\aaa_i\subset {[2,n]\choose k-1},\bb_i\subset {[2,n]\choose k}$ be the pair of cross-intersecting families defined by characteristic sets $S_i,T_i$. Observe that $\aaa_0=\aaa,\bb_0=\bb$ and $\aaa_l=\aaa',\bb_l=\bb'$.

Finally, for each $i\in [l]$, we show that $|\aaa_i|+|\bb_i|\ge |\aaa_{i-1}|+|\bb_{i-1}|$, moreover, the inequality is strict for $i=l$. This is clearly sufficient to conclude the proof of the lemma. Consider the bipartite graph $G$ with parts
\begin{align*}
\mathcal P_a^i:=&\big\{P\in {[2,n]\choose k-1}\ :\  P\cap [i] = S_i\cap [2,i]\big\},\\
\mathcal P_b^i:=&\big\{P\in {[2,n]\choose k}\ :\  P\cap [i] = [i]\setminus S_i\big\},
\end{align*}
and edges connecting disjoint sets. As before, we have $\aaa_{i-1}\cup \mathcal P_a^i = \aaa_i,$ $\bb_{i-1}\setminus \mathcal P_b^i = \bb_i$.
Using the fact that $|[i]\setminus S_i|\ge|[i]\cap S_i|$ and that this inequality is strict for $i=l$, we conclude the proof as in the previous case.
\end{proof}

Now let us put the things together.

\begin{proof}[Proof of Theorem~\ref{thmfull2}] First, \eqref{eqfull2} follows from Lemma~\ref{lemres}. Next, given a pair of cross-intersecting families $\aaa,\bb$ as in the theorem, we may assume using Theorem~\ref{thmHil} and Proposition~\ref{cross2} that $\aaa, \bb$ are lexicographic families defined by characteristic sets $S,T$ that strongly intersect in their last coordinate. We may further assume that they do not form a resistant pair. Using Lemma~\ref{lemdiv} with the smallest $i$ satisfying its conditions,  replace $\aaa,\bb$ with the corresponding pair $\aaa', \bb'$ defined by characteristic sets $S',T'$. Remark that $T\precneqq T'$ and that, moreover, $\aaa',\bb'$ form a resistant pair by the choice of $i$. (Note here that if $i=5$ then the resulting characteristic sets are $T'=T_m=\{2,3,4\}$ and $S'=S_m=\{1,4\}$.) Therefore, if $T_{l-1}\precneqq T$  then $T_l\prec T'$, and therefore  $|\mathcal L(S_l,k-1)|+|\mathcal L(T_l,k)|\ge |\aaa'|+|\bb'|\ge |\aaa|+|\bb|$. This completes the proof of the theorem.
\end{proof}

\subsection{Equality in Theorems~\ref{thmfull1},~\ref{thmfull2} and families with large diversity}\label{sec33}
In the notations of the previous section, let $\aaa,\bb$ be cross-intersecting  and defined by the characteristic sets $S,T$, respectively, where $|S|,|T|\le k$. In this section, we determine, for which $T$, $T_{l-1}\prec T\prec T_l$, it is possible to have equality in \eqref{eqfull3}.
\begin{defi}\label{defneutral} We say that a pair of strongly intersecting sets $S,\ T$ as above is \underline{$T_l$-neutral}, if $T$ is obtained in the following recursive way:
\begin{enumerate}
\item $T_l$ is $T_l$-neutral;
\item If $T'$ is $T_l$-neutral, then the set $T:=T'\cup \{2|T'|\}$ is $T_l$-neutral.
\end{enumerate}
\end{defi}
In other words, to form all $T_l$-neutral sets, we start from the set $T_l$, add the element $2|T_l|$ and then continue adding every other element until we have $k$ elements in the set.

It is not difficult to see that any $T_l$-neutral pair $S, T$ actually satisfies $T_{l-1}\precneqq T\prec T_l$. Let us also note that, in terms of Definition~\ref{defneutral}, each newly formed $T_l$-neutral set is different from the previous one: indeed, from Definition~\ref{defrespa}, the largest element in $T_l$ is at most $2|T_l|-2$ (actually, it is at most $2|T_l|-3$ for all $l<m$ and equal to $2|T_l|-2$ in the case $l=m$), and every newly added element (by Condition (2) of Definition~\ref{defneutral}) is bigger by 2 than the previously added element.

\begin{thm}\label{thmfulleq} Let $n>2k\ge6$. Consider a pair $\aaa \subset {[2,n]\choose k-1},\ \bb\subset{[2,n]\choose k}$ defined by strongly intersecting sets $S, T$ that intersect in their largest element. If $T_{l-1}<T \le T_l$ for some $l\in[m]$, then equality in \eqref{eqfull3} holds if and only if the pair $S, T$ is $T_l$-neutral.
\end{thm}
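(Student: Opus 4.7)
My plan is to first apply Theorem~\ref{thmHil} and Proposition~\ref{cross2} to reduce the question to lex families defined by strongly intersecting characteristic sets $(S,T)$, and then prove both directions by a careful analysis of the iterated reductions produced by Lemma~\ref{lemdiv}.

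For the forward direction (sufficiency), I would proceed by induction on $m := |T| - |T_l|$. The base case $m = 0$ (that is, $T = T_l$) is just the sharpness statement of Theorem~\ref{thmfull2}. For the inductive step, take a $T_l$-neutral $T'$ at level $m-1$ for which equality is already known, and set $T := T' \cup \{2|T'|\}$. The strong-intersection identity $|S| + |T| = j + 1$ with $j := \max T = 2|T'|$ yields $|S| = |T'|$ and hence $|[j]\cap S| = |[j]\setminus S| = j/2$; an easy inductive check using the structural property of $T_l$-neutral sets gives $|[i']\cap T'| > i'/2$ for every $5 \le i' < j$
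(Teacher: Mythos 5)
Your reduction to lexicographic families and the inductive scheme for sufficiency match the paper's approach, and the cardinality computation $|[j]\cap S| = |[j]\setminus S| = j/2$ with $j=2|T'|$ is exactly the key balance the paper exploits (the paper's written ``$x:=2|T|$'' appears to be a typo for $2|T'|$). However, the proposal cuts off before completing even that direction: having established the balance, one must still identify $\aaa\setminus\aaa'$ and $\bb'\setminus\bb$ with the slices $\mathcal P_a := \{P : P\cap [j] = [2,j]\cap S\}$ and $\mathcal P_b := \{P : P\cap [j] = [j]\setminus S\}$ (using that $S',S$ and $T',T$ are consecutive in the lex order on $[j]$), note that $|\mathcal P_a|=|\mathcal P_b|$ precisely because of the balance, and conclude $|\aaa|+|\bb| = |\aaa'|+|\bb'|$. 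That concluding step is absent.

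The more serious gap is that the necessity direction — the ``only if'' part, which is the substantive half of the theorem — is not addressed at all. The paper's argument there is nontrivial: given $T$ with $T_{l-1}\prec T\prec T_l$ attaining equality, one first shows $T\supset T_l$; then, letting $x$ be the largest element of $T$, one must rule out $|[i]\cap S|\ge|[i]\setminus S|$ for every $5\le i\le x-1$ (via a strict-inequality version of the bipartite-graph switch, using that both $\mathcal P_a\cap\aaa$ and $\mathcal P_b\cap\bb$ are nonempty and that the truncated pair $T':=T\cap[i]$ still satisfies $T'\prec T_l$); since $T$ is not resistant this forces $|[x]\cap S|=|[x]\setminus S|$, i.e. $x=2|T\setminus\{x\}|$, and one closes by induction on $|T|$. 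Without this direction the proposal proves only a corollary of Theorem~\ref{thmfull2}'s sharpness, not the claimed characterization of equality.
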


\begin{proof} First, let us show that, for any $T_l$-neutral pair, we have equality in~\eqref{eqfull3}. We prove it inductively. It is clear for the pair with characteristic sets $S,T$, where $T=T_l$. Assuming it holds for $T'$, let us prove that it holds for $T:=T'\cup \{2|T'|\}$.

Put $x:=2|T|$ and consider the pairs of cross-intersecting families $\aaa, \bb$, $\aaa', \bb'$, corresponding to the $T_l$-neutral pairs of sets $S, T$ and $S', T'$, respectively.
By Definition~\ref{defneutral}, we have $$|[x]\cap S| = |[x]\setminus S|.$$
Therefore, applying the argument of Lemma~\ref{lemdiv} with
\begin{align*}
\mathcal P_a:=&\big\{P\in {[2,n]\choose k-1}\ :\  P\cap [x] = [2,x]\cap S\big\},\\
\mathcal P_b:=&\big\{P\in {[2,n]\choose k}\ :\  P\cap [x] = [x]\setminus S\big\},
\end{align*}
We get that $k-1-|[2,x]\cap S| = k-|[x]\setminus S|$, which implies $|\mathcal P_a| = |\mathcal P_b|$. Moreover, $\aaa\setminus \aaa' = \mathcal P_a,$ $\bb\setminus \bb' = \mathcal P_b,$ since the sets $S'$ and $S$ are both subsets of $[x]$ and are consecutive in the lexicographical order on $[x]$ (and the same holds for $T, T'$). Therefore, $|\aaa'|+|\bb'| = |\aaa|+|\bb|$.\\

In the other direction, take a set $T$, $T_{l-1}\precneqq T\precneqq T_l$, and its pair $S$. Consider the corresponding pair of cross-intersecting families $\aaa, \bb$ and assume that they satisfy equality in \eqref{eqfull3}. Then it is easy to see that $T\supset T_l$. (Otherwise, either $T\succ T_l$, or $T$ must contain and thus succeed some other resistant set, which succeeds$T_l$, again contradicting $T\precneqq T_l$.) Assuming that $x$ is the last element in $T$, we must have
$$|[i]\cap S|<|[i]\setminus S| \ \ \ \ \ \ \ \text{for }5\le i\le x-1.$$
Indeed, otherwise, considering the bipartite graph $G$ with parts $\mathcal P_a$, $\mathcal P_b$ as displayed above for that $i$, we would get that $|\mathcal P_a|\le |\mathcal P_b|$ and both $\mathcal P_a\cap \aaa$ and $\mathcal P_b\cap \bb$ are non-empty. In this case $|\mathcal P_a\cap \aaa|+|\mathcal P_b\cap \bb|<|\mathcal P_a|$, which means that the pair $\aaa', \bb'$ defined by the characteristic sets $T':=T\cap [i]$ and its pair $S'$ would satisfy $|\aaa'|+|\bb'|>|\aaa|+|\bb|$. Moreover, $T'\supset T_l$, so $T\precneqq T'\prec T_l$ and  $|\aaa'|+|\bb'|\le |\mathcal L(S_l,k-1)|+|\mathcal L(T_l,k)|$. This contradicts the equality for $\aaa,\bb$ in \eqref{eqfull3}.

Therefore, since the pair $S, T$ is not resistant, we have
$$|[x]\cap S|=|[x]\setminus S|.$$
(We cannot have ``$>$'', since otherwise we would have ``$\ge$'' for $i=x-1$.) Removing $x$, we get a set $T'$, and conclude that $x = 2|T'|$. By induction on the size of the set $T$, we may assume that $T'$ is $T_l$-neutral. But then $T$ is $T_l$-neutral as well.
\end{proof}

A slight modification of the argument above leads to the proof of the second part of Corollary~\ref{corhm}.
\begin{proof}[Proof of the second part of Corollary~\ref{corhm}]
In the framework of the previous proofs, replace $\ff(1)$, $\ff(\bar 1)$ with lexicographical families and put  $\aaa:=\ff(1)$, $\bb:=\ff(\bar 1)$. For $i\in [3]$, let $\aaa_i,\bb_i$ be the pairs of cross-intersecting families defined by characteristic sets $T^i,S^i$, where $T^1=T_m=\{2,3,4\}$, $T^2=\{2,3\}$ and $T^3=\{2,4\}$. It is easy to see that $|\bb_1|={n-4\choose k-3},\ |\bb_2|={n-3\choose k-2}$ and $|\bb_3|={n-3\choose k-2}+{n-4\choose k-2}$. %Moreover, we claim that

If $\gamma(\ff)<{n-3\choose k-2}$ then consider the graph $G$ with parts $\mathcal P_a,\mathcal P_b$, where the parts contains the $k$-element sets that intersect $[4]$ in $\{1,4\}$ and in $\{2,3\}$, respectively. Then $|\mathcal P_a|=|\mathcal P_b|$ and $\aaa_1\setminus \aaa_2=\mathcal P_a$, $\bb_2\setminus \bb_1 = \mathcal P_b$. Moreover, $(\mathcal P_a\cap \aaa)\cup (\mathcal P_b\cap \bb)$ is independent in $G$. Thus, if $\aaa\notin \{\aaa_1,\aaa_2\}$ then the aforementioned independent set is strictly smaller than $\mathcal P_a,\mathcal P_b$, and thus $|\mathcal A|+|\mathcal B|<|\mathcal A_i|+|\mathcal B_i|$ for $i=1,2$.

Furthermore, provided that both $\mathcal P_a\cap \aaa$ and $\mathcal P_b\cap \bb$ are non-empty, then
$$|\mathcal P_a\cap \aaa|+|\mathcal P_b\cap \bb|\le {n-4\choose k-2}-{n-k-2\choose k-2}+1.$$
This is easy to deduce from the Kruskal--Katona theorem and was proven already by Hilton and Milner \cite{HM}. For a more general statement, see Corollary~\ref{corkz} in Section~\ref{sec34}.
Since $\gamma(\ff)>{n-4\choose k-3}$, $\mathcal P_b\cap \mathcal \bb$ is non-empty. If $\aaa$ is non-empty then the displayed bound holds, and thus \eqref{eqhm} is valid. If $\aaa$ is empty then, using $|\bb|\le {n-4\choose k-2}-{n-k-2\choose k-2}+1$ (that immediately follows from $|\ff(\bar 1)|\le {n-3\choose k-2}-{n-k-2\choose k-2}+1$), we conclude that the displayed holds trivially. Thus, \eqref{eqhm} is valid in any case.

If $\gamma(\ff)>{n-3\choose k-2}$ then we repeat the proof, but using the parts $\mathcal P_a,\mathcal P_b$ that contain the $k$-element sets that intersect $[4]$ in $\{1,3\}$ and in $\{2,4\}$, respectively.
\end{proof}

\subsection{Further generalizations}\label{sec34}
In this subsection, we give several further generalizations and strengthenings. It includes weighted versions and full cross-intersecting version of our results. All of them we state without proof or with a sketch of proof since the proofs use the same ideas as the proof of Theorems~\ref{thmfull2} and~\ref{thmfulleq}.

Our techniques allow us to give a weighted version of Theorems~\ref{thmfull1} and~\ref{thmfull2}. Assume that,  given a lower bound on $\gamma(\ff)$, we aim to maximise the expression  $\Delta(\ff)+c\gamma(\ff)$ with some $c>1$. (In terms of cross-intersecting families, we are maximising the expression $|\aaa|+c|\bb|$.) Then the following
is true.

\begin{thm}\label{thmfullw} Let $n>2k\ge 6$. Consider all resistant pairs $S_l\subset [n],\ T_l\subset [2,n]$, where $l=0,\ldots,m$. Assume that $T_0<T_1<T_2<\ldots <T_m$. Put $C:= \frac{n-k-2}{k-2}>1$.  Then
\begin{equation}\label{eqfull4} |\mathcal L(S_{l-1},k-1)|+C|\mathcal L(T_{l-1},k)|\ge|\mathcal L(S_{l},k-1)|+C|\mathcal L(T_{l},k)| \ \ \ \text{ for each }l \in [m],\end{equation}
and any cross-intersecting pair of families $\mathcal A\subset {[2,n]\choose k-1},\ \mathcal B\subset {[2,n]\choose k}$ with $|\mathcal L(T_{l-1},k)|<|\bb|\le |\mathcal L(T_l,k)|$ satisfies $|\aaa|+C|\bb|\le |\mathcal L(S_l,k-1)|+C|\mathcal L(T_l, k)|$.\vskip+0.1cm

In terms of intersecting families, if $\ff\subset {[n]\choose k}$ is intersecting and $|\mathcal L(T_{l-1},k)|<\gamma(\ff)\le |\mathcal L(T_l,k)|$, then $\Delta(\ff)+C\gamma(\ff)\le |\mathcal L(S_l,k-1)|+C|\mathcal L(T_l, k)|$.
\end{thm}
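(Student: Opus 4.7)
The plan is to carry over the proof of Theorem~\ref{thmfull2} while tracking the weighted sum $|\aaa|+C|\bb|$ in place of $|\aaa|+|\bb|$. Each switching step in Lemma~\ref{lemdiv} and Lemma~\ref{lemres} takes place on a biregular bipartite graph $G$ with parts $(\mathcal P_a,\mathcal P_b)$, and the essential new input is the weighted independent set bound. For any independent pair $(U,V)$ with $U\subset\mathcal P_a,\ V\subset\mathcal P_b$, counting edges from $U$ and using $|\mathcal P_a|d_a=|\mathcal P_b|d_b$ gives
$$|U|+C|V|\le|\mathcal P_a|+|V|\Bigl(C-\frac{|\mathcal P_a|}{|\mathcal P_b|}\Bigr),$$
so the weighted maximum independent set equals $\max\bigl(|\mathcal P_a|,C|\mathcal P_b|\bigr)$.

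For the weighted analogue of Lemma~\ref{lemdiv}, which enlarges $\bb$, the change in $|\aaa|+C|\bb|$ equals $C|\mathcal P_b|-|\aaa\cap\mathcal P_a|-C|\bb\cap\mathcal P_b|$, which is non-negative whenever $C\ge|\mathcal P_a|/|\mathcal P_b|$. The non-resistance hypothesis $|[i]\cap S|\ge|[i]\setminus S|$ gives $s_b\ge s_a$ and hence $|\mathcal P_b|\ge|\mathcal P_a|$; since $C>1$, the inequality holds trivially and Lemma~\ref{lemdiv} transfers verbatim.

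The heart of the argument is the weighted version of Lemma~\ref{lemres}, which requires $|\mathcal P_a|\ge C|\mathcal P_b|$ at each switching step. At the tightest admissible case $i=5$, $s_a-s_b=1$ and $s_b=k-3$, one computes
$$\frac{|\mathcal P_a|}{|\mathcal P_b|}=\frac{\binom{n-5}{k-2}}{\binom{n-5}{k-3}}=\frac{n-k-2}{k-2}=C,$$
with exact equality. Enlarging $i$ strictly increases the ratio, since the derivative of $\frac{n-k-(i-1)/2}{k-(i-1)/2}$ in $i$ equals $\frac{n-2k}{2(k-(i-1)/2)^2}$ and is positive for $n>2k$; enlarging $s_a-s_b$ multiplies by additional factors $\frac{n-i-s_b-j}{s_b+j+1}$ each $\ge 1$ under $n>2k$. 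Thus $C=(n-k-2)/(k-2)$ is precisely the sharp threshold at which monotonicity of resistant-pair weighted sums continues to hold, and this is the reason the theorem states exactly this value.

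The main obstacle I foresee lies in Case~2 of Lemma~\ref{lemres}, where the chain of one-element additions $T\subsetneq T_1\subsetneq\ldots\subsetneq T'$ may pass through steps with $s_a=s_b$ (i.e., even $t_i$ at which Condition~(2) of Definition~\ref{defrespa} is tight). At such steps the LP bound alone gives $C|\mathcal P_b^i|>|\mathcal P_a^i|$, so per-step weighted monotonicity can fail. The way out is to exploit the rigid lexicographic structure of $(\aaa_{i-1},\bb_{i-1})$: a direct check on minimal elements shows $\aaa_{i-1}\cap\mathcal P_a^i=\varnothing$ (sets of $\mathcal P_a^i$ contain $t_i$ but miss $t_{i-1}$, placing them strictly after $S_{i-1}$) and $\mathcal P_b^i\subseteq\bb_{i-1}$, so the per-step change simplifies to $|\mathcal P_a^i|-C|\mathcal P_b^i|$. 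These contributions telescope to $(|\aaa'|+C|\bb'|)-(|\aaa|+C|\bb|)$, and the overall sign can then be controlled by comparing $|\aaa'|-|\aaa|$ to $C(|\bb|-|\bb'|)$ directly via the ratio estimate of Case~1 applied to the net ``summary'' switching $(S,T)\to(S',T')$. With both weighted lemmas established, Theorem~\ref{thmfullw} is assembled exactly as Theorem~\ref{thmfull2}: Theorem~\ref{thmHil} reduces an arbitrary cross-intersecting pair to a lex pair, weighted Lemma~\ref{lemdiv} pushes it to the resistant pair indexed by $l$ without decreasing $|\aaa|+C|\bb|$, and weighted Lemma~\ref{lemres} yields~\eqref{eqfull4}.
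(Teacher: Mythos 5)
Your overall strategy --- weighted independent sets in the biregular switching graphs, with the threshold ratio computed at $i=5$, $z_b=k-3$ where $\tbinom{n-5}{k-2}/\tbinom{n-5}{k-3}=\tfrac{n-k-2}{k-2}=C$ --- is exactly the paper's sketched argument, and you have correctly located the one place where that sketch is not honest: in Case~2 of Lemma~\ref{lemres} the intermediate steps can have $|[t_i]\setminus S_i|=|[t_i]\cap S_i|$, hence $|\mathcal P_a^i|=|\mathcal P_b^i|$, and the per-step weighted change $|\mathcal P_a^i|-C|\mathcal P_b^i|$ is then negative. (The paper's assertion that $z_a>z_b$ ``in each of the cases'' is false precisely there.) Your observation that the per-step change is exactly $|\mathcal P_a^i|-C|\mathcal P_b^i|$ and that these telescope is also correct.

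The gap is in your last step: the net inequality $|\aaa'|-|\aaa|\ge C\,(|\bb|-|\bb'|)$ to which your telescoping reduces is not recoverable by ``the ratio estimate of Case~1 applied to a summary switching'', because it is false for consecutive resistant pairs --- and consequently \eqref{eqfull4} itself fails as stated. Take $k=5$: then $T_m=\{2,3,4\}$, $S_m=\{1,4\}$, and the preceding resistant pair is $T_{m-1}=\{2,3,4,6,7\}$, $S_{m-1}=\{1,5,7\}$. A direct count gives $|\mathcal L(S_{m-1},k-1)|-|\mathcal L(S_m,k-1)|={n-6\choose k-3}+{n-7\choose k-3}$, while $|\mathcal L(T_m,k)|-|\mathcal L(T_{m-1},k)|={n-6\choose k-3}+{n-7\choose k-4}$; the ratio of these tends to $2$ as $n/k\to\infty$, whereas $C=\tfrac{n-k-2}{k-2}$ is unbounded. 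Numerically, for $n=20$, $k=5$ the two sides of \eqref{eqfull4} at $l=m$ are $2225+\tfrac{13}{3}\cdot 16\approx 2294$ versus $2056+\tfrac{13}{3}\cdot 120=2576$. So no patch of Case~2 can succeed, and any correct write-up must either restrict the range of $l$ (the inequality does hold for small $l$, e.g.\ $l=1$) or weaken \eqref{eqfull4}. By contrast, the second and third assertions of the theorem are true but need none of this machinery: since $|\bb|\le|\mathcal L(T_l,k)|$ and $C>1$, the unweighted Theorem~\ref{thmfull2} gives $|\aaa|+C|\bb|=(|\aaa|+|\bb|)+(C-1)|\bb|\le|\mathcal L(S_l,k-1)|+|\mathcal L(T_l,k)|+(C-1)|\mathcal L(T_l,k)|$, which is the claimed bound. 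That one-line reduction is what you should use for those parts, reserving the bipartite analysis for whatever portion of \eqref{eqfull4} you actually need.
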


\begin{proof}[Sketch of the proof] The proof of this theorem follows the same steps as that of Theorem~\ref{thmfull2}. We sketch the proof of the cross-intersecting version of the theorem.
Using Lemma~\ref{lemdiv}, we may assume that $\aaa$ and $\bb$ form a resistant pair (indeed, otherwise, replacing $\aaa,\ \bb$ with $\aaa',$ $\bb'$ which satisfy $|\aaa'|+|\bb'|\ge |\aaa|+|\bb|$, $|\bb'|>|\bb|$ definitely increases the value of $|\aaa|+C|\bb|$). Then, looking at the proof of Lemma~\ref{lemres}, we see that in each of the cases the bipartite graph $G$ had parts of sizes ${n-i\choose z_a}$ and ${n-i\choose z_b}$, where $z_a+z_b = 2k-i$ and $z_a>z_b$. Since $i\ge 5$, we have $z_b\le k-3$. Therefore, if we put weight $w$, $w\le {n-i\choose z_a}/{n-i\choose z_b}$, on each vertex of the part $\mathcal P_b$ and weight 1 on each vertex of the part $\mathcal P_a$, we can still conclude that $\mathcal P_a$ is the independent set of the largest weight in $G$. The rest of the argument works out as before.
We have
\begin{equation}\frac{{n-i\choose z_a}}{{n-i\choose z_b}}\ge\frac{(n-i-z_b)}{z_b+1} = \frac{(n-2k+z_a)}{z_b+1}\ge \frac{(n-2k+z_b+1)}{z_b+1}\ge \frac{(n-k-2)}{k-2}.\end{equation}
Therefore, we may use $w=\frac{n-k-2}{k-2}$ as the weight of the vertices in $\mathcal P_b$.
\end{proof}

\begin{cor}\label{corweight}
 Let $n>2k\ge 6$. For any intersecting family $\ff\subset{[n]\choose k},$ $\gamma(\ff)\le {n-4\choose k-3}$, we have $|\Delta(\ff)|+\frac{n-k-2}{k-2}\gamma(\ff)\le {n-1\choose k-1}$.

 If, additionally, $\ff$ is non-trivial, then $|\Delta(\ff)|+\frac{n-k-2}{k-2}\gamma(\ff)\le {n-1\choose k-1}-{n-k-1\choose k-1}+\frac{n-k-2}{k-2}$.
\end{cor}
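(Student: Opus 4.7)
The strategy is to derive both inequalities as boundary evaluations of the weighted bound in Theorem~\ref{thmfullw}, using its built-in monotonicity \eqref{eqfull4}.

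First I would relabel so that element $1$ achieves $\Delta(\ff)$, and set $\aaa:=\ff(1)\subset{[2,n]\choose k-1}$, $\bb:=\ff(\bar 1)\subset{[2,n]\choose k}$, so that $|\aaa|=\Delta(\ff)$, $|\bb|=\gamma(\ff)$, and the pair $\aaa,\bb$ is cross-intersecting on $[2,n]$. Abbreviate $C:=(n-k-2)/(k-2)$. Every further estimate will come straight from Theorem~\ref{thmfullw}.

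For the first inequality, the case $\gamma(\ff)=0$ is just the Erd\H os--Ko--Rado bound $\Delta(\ff)\le{n-1\choose k-1}$. Otherwise, the hypothesis $\gamma(\ff)\le{n-4\choose k-3}=|\mathcal L(T_m,k)|$ guarantees that there is a unique $l\in[m]$ with $|\mathcal L(T_{l-1},k)|<\gamma(\ff)\le|\mathcal L(T_l,k)|$, and Theorem~\ref{thmfullw} gives
$$\Delta(\ff)+C\gamma(\ff)\le|\mathcal L(S_l,k-1)|+C|\mathcal L(T_l,k)|.$$
Iterating the monotonicity \eqref{eqfull4} telescopes the right-hand side down to the value at $l=0$, namely $|\mathcal L(S_0,k-1)|+C|\mathcal L(T_0,k)|$. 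The convention stated right before Theorem~\ref{thmfull2} tells us that $S_0=\{1,n\}$ is characteristic of the full family ${[2,n]\choose k-1}$ and $T_0=[2,n]$ of the empty family, so this value is ${n-1\choose k-1}+0={n-1\choose k-1}$, as desired.

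For the non-trivial refinement, non-triviality forces $\gamma(\ff)\ge 1$, hence the index $l$ above is at least $1$. Telescoping \eqref{eqfull4} only down to $l=1$ now yields
$$\Delta(\ff)+C\gamma(\ff)\le|\mathcal L(S_1,k-1)|+C|\mathcal L(T_1,k)|.$$
The smallest resistant value is $\gamma_1=1$, whose $(n-k-1)$-cascade representation has the single summand ${n-k-1\choose n-k-1}$; the reduction performed after Theorem~\ref{thmfull2} then forces $T_1=[2,k+1]$ and $S_1=\{1,k+1\}$. Thus $\mathcal L(T_1,k)=\{[2,k+1]\}$ has size $1$, and its maximal cross-intersecting partner $\mathcal L(S_1,k-1)$ consists of all $(k-1)$-subsets of $[2,n]$ meeting $[2,k+1]$, of size ${n-1\choose k-1}-{n-k-1\choose k-1}$. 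Substituting yields precisely ${n-1\choose k-1}-{n-k-1\choose k-1}+C$. I do not anticipate a real obstacle: once Theorem~\ref{thmfullw} is granted, both parts reduce to identifying the boundary resistant pairs $(S_0,T_0)$ and $(S_1,T_1)$ and reading off their cardinalities.
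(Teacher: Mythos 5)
Your proposal is correct and is essentially the derivation the paper intends (Corollary~\ref{corweight} is stated without proof as an immediate consequence of Theorem~\ref{thmfullw}): one applies the weighted bound for the unique $l$ with $\gamma_{l-1}<\gamma(\ff)\le\gamma_l$, telescopes the monotonicity \eqref{eqfull4} down to $l=0$ for the first inequality and to $l=1$ for the non-trivial case, and your identifications $S_0=\{1,n\}$, $T_0=[2,n]$, $T_1=[2,k+1]$, $S_1=\{1,k+1\}$ and the resulting cardinalities are all right (with the $\gamma(\ff)=0$ case handled separately by Erd\H os--Ko--Rado, and non-triviality correctly forcing $\gamma(\ff)\ge1$).
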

\vskip+0.2cm

Is not difficult to extend the considerations of Section~\ref{sec3} to the case of cross-intersecting families $\aaa\subset {[n]\choose a}, \bb\subset {[n]\choose b}$. The wording of Theorem~\ref{thmfull2} would stay practically the same. One just need to adjust Definition~\ref{defrespa}. Note that, unlike before in this section, here we will work with cross-intersecting families on $[n]$ (and not $[2,n]$), and so  $\mathcal L(S, a)$ stands for $\mathcal L([n],S,a)$.

\begin{defi} We say that two sets $S, T\subset [n]$  \underline{form an $(a,b)$-resistant pair}, if either $S = \{b-a+2\}, T =[b-a+2]$ or the following holds. Assuming that the largest element of $T$ is $j$, we have
\begin{enumerate}
  \item $S\cap T =\{j\}$, $S\cup T= [j]$, $|S|\le a$  and $|T|\le b$;
  \item for each $i\ge b-a+2$ we have $|[i]\cap S|-a< |[i]\setminus S|-b$.
\end{enumerate}
\end{defi}

  Let $m$ be the number of resistant pairs. For convenience, put $T_0 = [2,n]$ to correspond to the empty family, as well as $T_{m+1}=[b-a+1]$ and $T_{m+2}$ to be the analogues of the sets $\{2,3\}$ and $\{2,4\}$, respectively.
  Below we state a theorem, which is the analogue of Theorems~\ref{thmfull2},~\ref{thmfulleq},~\ref{thmfullw} and (the second part of) Corollary~\ref{corhm} in the case of general cross-intersecting families. Its proof is a straightforward generalization of the proofs of the respective theorems, and thus we omit it.

\begin{thm}\label{thmfullcri} Let $b, a>0$, $n>a+b$. Consider all $(a,b)$-resistant pairs $S_l\subset [n],\ T_l\subset [2,n]$, where $l\in [m]$. Assume that $T_0<T_1<T_2<\ldots T_m$.

1. Then
\begin{equation}\label{eqfull5} |\mathcal L(S_{l-1},a)|+|\mathcal L(T_{l-1},b)|>|\mathcal L(S_{l},a)|+|\mathcal L(T_{l},b)| \ \ \ \text{ for each }l \in [m],\end{equation}
and any cross-intersecting pair of families $\mathcal A\subset {[n]\choose a},\ \mathcal B\subset {[n]\choose b}$ with $|\mathcal L(T_{l-1},b)|<|\bb|\le |\mathcal L(T_l,b)|$ satisfies \begin{equation}\label{eqfull6} |\aaa|+|\bb|\le |\mathcal L(S_l,a)|+|\mathcal L(T_l, b)|.\end{equation}
If the families $\mathcal L(|\aaa|,a)$, $\mathcal L(|\bb|,b)$ have characteristic sets $S,T$, then we have equality in \eqref{eqfull6} if and only if $S,T$ is a $T_l$-neutral $(a,b)$-pair, where the notion of a $T_l$-neutral $(a,b)$-pair is a straightforward generalization of that of neutral pair.

2. The same conclusion holds with $|\mathcal B|$, $|\mathcal L(T_{l-1},b)|$ $|\mathcal L(T_l,b)|$ replaced with $C|\mathcal B|$, $C|\mathcal L(T_{l-1},b)|$ and $C|\mathcal L(T_l,b)|$, where $C$ is a constant, $C<\frac{n-b-1}{a-1}.$

3. Denote $t:=b+1-a$. Assume that ${n-i\choose b-i}<|\bb|\le {n-t\choose a-1}+{n-t-1\choose a-1}$ for integer $i\in [t+1,b]$. If $i\ge t+2$ then we have \begin{small}$$|\aaa|+|\bb|\le {n\choose a}-{n-i\choose a}+{n-i\choose b-i} -{n-b-1\choose a-1}+1.$$\end{small}

If $i=t+1$ and $|\bb|\notin [{n-t\choose a-1}-{n-b-1\choose a-1}+2,{n-t\choose a-1}]$, $|\bb|\le {n-t\choose a-1}+{n-t-1\choose a-1}-{n-b-1\choose a-1}+1$ then we have \begin{small}$$|\aaa|+|\bb|\le {n\choose a}-{n-t\choose a}+{n-t\choose b-t} -{n-b-1\choose a-1}+1.$$\end{small}
\end{thm}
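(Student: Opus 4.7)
The plan is to follow, \emph{mutatis mutandis}, the proofs of Theorems~\ref{thmfull2},~\ref{thmfulleq},~\ref{thmfullw} and the second part of Corollary~\ref{corhm}, adjusting only the arithmetic from the $(k-1,k)$ pair to a general $(a,b)$ pair. Theorem~\ref{thmHil} lets us pass to lexicographic families $\aaa=\mathcal{L}([n],|\aaa|,a)$ and $\bb=\mathcal{L}([n],|\bb|,b)$, and the natural $[n]$-ground-set analog of Proposition~\ref{cross2} replaces these by a pair of characteristic sets $S,T$ strongly intersecting in their largest element $j$, with $|S|\le a,\ |T|\le b$ and $1\in S$. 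The definition of an $(a,b)$-resistant pair has been rigged so that condition~(2) is exactly the statement that the ``$\bb$-side part'' $\mathcal{P}_b$ described below is strictly smaller than the ``$\aaa$-side part'' $\mathcal{P}_a$ at every level $i\ge b-a+2$.

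The workhorse is the $(a,b)$-analog of Lemma~\ref{lemdiv}. If condition~(2) fails at some level $i\ge b-a+2$, set $T':=[i]\setminus S$ and let $S'$ be the unique set strongly intersecting $T'$ in its largest element. Form the biregular bipartite graph $G$ on
\[
\mathcal{P}_a=\Big\{P\in\binom{[n]}{a}:P\cap[i]=[i]\cap S\Big\},\qquad \mathcal{P}_b=\Big\{P\in\binom{[n]}{b}:P\cap[i]=[i]\setminus S\Big\},
\]
with edges between disjoint sets, of sizes $\binom{n-i}{a-|[i]\cap S|}$ and $\binom{n-i}{b-|[i]\setminus S|}$. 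The failure of condition~(2), together with $n>a+b$, yields $|\mathcal{P}_b|\ge|\mathcal{P}_a|$; since $(\aaa\cap\mathcal{P}_a)\cup(\bb\cap\mathcal{P}_b)$ is independent in $G$, replacing $\aaa,\bb$ with the pair $\aaa',\bb'$ defined by $S',T'$ cannot decrease $|\aaa|+|\bb|$ and strictly increases $|\bb|$. Lemma~\ref{lemres} carries over using the same biregular graphs with the roles of the parts swapped, yielding the monotonicity \eqref{eqfull5}. Chaining the two reductions proves \eqref{eqfull6}; the characterisation of equality (the $T_l$-neutral $(a,b)$-pairs) is then obtained by the same induction as in Theorem~\ref{thmfulleq}, with the balance condition $2|T'|$ replaced by the unique index $x$ at which $|\mathcal{P}_a|=|\mathcal{P}_b|$.

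For Part~2 we place weight $C$ on the vertices of $\mathcal{P}_b$ and weight $1$ on those of $\mathcal{P}_a$. In the resistant regime the maximum weighted independent set in $G$ remains $\mathcal{P}_a$ as long as $C\le|\mathcal{P}_a|/|\mathcal{P}_b|$. The elementary estimate
\[
\frac{|\mathcal{P}_a|}{|\mathcal{P}_b|}=\frac{\binom{n-i}{z_a}}{\binom{n-i}{z_b}}\ge \frac{n-i-z_b}{z_b+1},
\]
where $z_a+z_b=a+b-i$ and $z_a>z_b$, is optimised at the extreme admissible $(i,z_a,z_b)$ to give the uniform threshold $(n-b-1)/(a-1)$. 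Hence for any $C<(n-b-1)/(a-1)$ the weighted analogs of Lemma~\ref{lemdiv} and Lemma~\ref{lemres} go through verbatim, and Part~2 follows.

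For Part~3, where $|\bb|$ exceeds $|\mathcal{L}(T_m,b)|$ and the resistance machinery ceases, we follow the proof of the second part of Corollary~\ref{corhm}. For each relevant $i$ we compare $\aaa,\bb$ with the pair whose characteristic set is the appropriate ``balanced extension'' through level $i$ via a bipartite graph in which $|\mathcal{P}_a|=|\mathcal{P}_b|$; whenever both $\mathcal{P}_a\cap\aaa$ and $\mathcal{P}_b\cap\bb$ are non-empty, Corollary~\ref{corkz} bounds $|\mathcal{P}_a\cap\aaa|+|\mathcal{P}_b\cap\bb|\le |\mathcal{P}_a|-\binom{n-b-1}{a-1}+1$, which delivers exactly the $-\binom{n-b-1}{a-1}+1$ saving appearing in the displayed inequalities. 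The principal obstacle is the bookkeeping in the case $i=t+1$: on the exceptional interval $\big[\binom{n-t}{a-1}-\binom{n-b-1}{a-1}+2,\binom{n-t}{a-1}\big]$ of sizes of $\bb$ the displayed bound is false, and one must instead compare $\aaa,\bb$ directly against the two competing candidate families whose characteristic sets are the $(a,b)$-analogs of $\{2,3\}$ and $\{2,4\}$, exactly as was done in the treatment of $T^2,T^3$ at the end of the proof of Corollary~\ref{corhm}.
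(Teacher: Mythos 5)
Your proposal is correct and follows exactly the route the paper intends: the paper omits this proof, stating only that it is ``a straightforward generalization of the proofs of the respective theorems,'' and your sketch carries out precisely that generalization --- the $(a,b)$-analogs of Lemmas~\ref{lemdiv} and~\ref{lemres}, the neutral-pair induction of Theorem~\ref{thmfulleq}, the weight threshold $(n-b-1)/(a-1)$ for Part~2, and the Corollary~\ref{corkz}-based saving of $\binom{n-b-1}{a-1}-1$ with the exceptional interval at $i=t+1$ for Part~3. The supporting arithmetic you supply (the bound $z_b\le a-2$ giving the weight threshold, and the identification of the competing families $T_{m+1},T_{m+2}$) checks out.
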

Remark that we have $|\aaa|+|\bb|={n\choose a}-{n-t\choose a}+{n-t\choose a-1}$ for $\aaa, \bb$ defined by characteristic sets $S_i, T_i$, where $i=m,m+1,m+2$.

This theorem generalizes and strengthens many results on cross-intersecting families, in particular, the theorem for cross-intersecting families proven in \cite{KZ} and the following theorem due to Frankl and Tokushige \cite{FT}

\begin{thm}[Frankl, Tokushige, \cite{FT}]\label{lemft} Let $n > a+b$, $a\le b$, and suppose that families $\mathcal F\subset{[n]\choose a},\mathcal G\subset{[n]\choose b}$ are cross-intersecting. Suppose that for some real number $\alpha \ge 1$ we have  ${n-\alpha \choose n-a}\le |\mathcal F|\le {n-1\choose n-a}$. Then \begin{equation}\label{eqft}|\mathcal F|+|\mathcal G|\le {n\choose b}+{n-\alpha \choose n-a}-{n-\alpha \choose b}.\end{equation}
\end{thm}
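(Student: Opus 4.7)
My approach is to reduce to the lex-initial-segment setting via Kruskal--Katona and then invoke Theorem~\ref{thmfullcri}, the $(a,b)$-cross-intersecting analogue of the section's main result. First, by Theorem~\ref{thmHil}, I may replace $\mathcal F$ by $\mathcal L([n],|\mathcal F|,a)$ and $\mathcal G$ by $\mathcal L([n],|\mathcal G|,b)$ without changing sizes or losing cross-intersection. Enlarging $\mathcal G$ to the maximal cross-intersecting companion of $\mathcal F$ only helps to prove the inequality, so I may assume $(\mathcal F,\mathcal G)$ is maximal. By Proposition~\ref{cross2}, there then exist characteristic sets $S,T\subset [n]$ strongly intersecting in their largest element such that $\mathcal F=\mathcal L(S,a)$ and $\mathcal G=\mathcal L(T,b)$.

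Next, I would apply Theorem~\ref{thmfullcri}(1), which bounds $|\mathcal F|+|\mathcal G|$ by $|\mathcal L(S_l,a)|+|\mathcal L(T_l,b)|$ for the $(a,b)$-resistant pair $(S_l,T_l)$ determined by the band in which $|\mathcal G|$ lies. For integer $\alpha\in[1,a]$ the ``EKR-like'' configuration $\mathcal F^\ast=\{F\colon [\alpha]\subseteq F\}$, $\mathcal G^\ast=\{G\colon G\cap[\alpha]\ne\emptyset\}$ is cross-intersecting, has $|\mathcal F^\ast|=\binom{n-\alpha}{n-a}$ and $|\mathcal G^\ast|=\binom{n}{b}-\binom{n-\alpha}{b}$, and it corresponds (by a short direct check) to a particular resistant pair $(S_{l(\alpha)},T_{l(\alpha)})$ indexed by $\alpha$. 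The hypothesis $|\mathcal F|\ge \binom{n-\alpha}{n-a}=|\mathcal F^\ast|$ together with the maximality of $(\mathcal F,\mathcal G)$ forces $|\mathcal G|\le |\mathcal G^\ast|=|\mathcal L(T_{l(\alpha)},b)|$, which places the configuration in the correct regime so that Theorem~\ref{thmfullcri}(1) immediately yields \eqref{eqft} for integer $\alpha$.

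For real $\alpha\ge 1$, I would interpolate using the integer case. Set $\alpha_0:=\lceil\alpha\rceil$; then $\binom{n-\alpha_0}{n-a}\le \binom{n-\alpha}{n-a}\le|\mathcal F|$, so the integer case with $\alpha_0$ yields $|\mathcal F|+|\mathcal G|\le \binom{n}{b}+\binom{n-\alpha_0}{n-a}-\binom{n-\alpha_0}{b}$. It remains to verify the monotonicity $\binom{n-\alpha_0}{n-a}-\binom{n-\alpha_0}{b}\le \binom{n-\alpha}{n-a}-\binom{n-\alpha}{b}$ for $\alpha\le\alpha_0$, which follows from computing $\tfrac{d}{d\alpha}\bigl[\binom{n-\alpha}{n-a}-\binom{n-\alpha}{b}\bigr]$ in the generalized-binomial sense and using $n>a+b$ together with $a\le b$ to control the sign over the relevant interval.

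The step I expect to be the main obstacle is verifying cleanly that the EKR-like extremal configuration is genuinely captured by a resistant pair of Theorem~\ref{thmfullcri}: the naive characteristic sets $S=[\alpha]$ and $T=\{\alpha+1,\ldots,\alpha+b\}$ do not literally strongly intersect in their largest element, so one must instead identify the correct maximal-pair representation produced by Proposition~\ref{cross2} and check that this is the $(a,b)$-resistant pair of Theorem~\ref{thmfullcri} at the intended index. Alternatively, one can bypass Theorem~\ref{thmfullcri} entirely and repeat the bipartite switching argument of Lemma~\ref{lemdiv} directly in the $(a,b)$-cross-intersecting setting: any non-extremal lex pair admits a switch that strictly decreases $|\mathcal G|$ while not decreasing $|\mathcal F|+|\mathcal G|$, and one iterates until the extremal configuration is reached, after which the real-$\alpha$ interpolation above concludes the proof.
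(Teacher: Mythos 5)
The paper does not actually prove Theorem~\ref{lemft}: it is quoted from \cite{FT} as a known result that Theorem~\ref{thmfullcri} ``generalizes and strengthens.'' So there is no in-paper proof to compare against; what follows is an assessment of your proposed deduction.

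Your deduction has a genuine directional gap. Theorem~\ref{thmfullcri}(1), with $\aaa\subset\binom{[n]}{a}$ and $\bb\subset\binom{[n]}{b}$, gives the bound $|\aaa|+|\bb|\le|\mathcal L(S_l,a)|+|\mathcal L(T_l,b)|$ only after one locates $|\bb|$ in the band $(|\mathcal L(T_{l-1},b)|,|\mathcal L(T_l,b)|]$; and by \eqref{eqfull5} this bound is \emph{decreasing} in $l$. You take $\bb=\mathcal G$ and observe that $|\mathcal G|\le|\mathcal G^\ast|$ gives the upper end of a band, but this does not place $|\mathcal G|$ \emph{in} that band: $|\mathcal G|$ could fall in a lower band $l'<l(\alpha)$, where Theorem~\ref{thmfullcri}(1) gives the \emph{weaker} (larger) bound $|\mathcal L(S_{l'},a)|+|\mathcal L(T_{l'},b)|$. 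So the hypothesis ``$|\mathcal F|\ge|\mathcal F^\ast|$'' does not ``place the configuration in the correct regime'' as written. The fix is to invoke Theorem~\ref{thmfullcri} with the roles swapped, i.e.\ with $(\aaa,\bb):=(\mathcal G,\mathcal F)$ and uniformities $(a',b'):=(b,a)$, which is legitimate since the theorem allows arbitrary $a,b$. Then the hypothesis becomes a \emph{lower} bound on $|\bb|=|\mathcal F|$, which combined with \eqref{eqfull5} does give a uniform upper bound on the sum across all higher bands. A second error: the characteristic set of $\mathcal G^\ast=\{G:G\cap[\alpha]\ne\emptyset\}$ is $\{\alpha\}$ (not $\{\alpha+1,\dots,\alpha+b\}$), and that of $\mathcal F^\ast=\{F:[\alpha]\subseteq F\}$ is $[\alpha]$; these two sets strongly intersect in their common largest element $\alpha$, so Proposition~\ref{cross2} applies directly with no need for the workaround you anticipate. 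Finally, you should verify that the pair $([\alpha],\{\alpha\})$ need not be a resistant pair in the sense of the general definition (for small $\alpha$ or $b$ close to $a$, condition (2) can fail); it is rather a \emph{neutral} pair in the band containing it, and the equality statement in Theorem~\ref{thmfullcri}(1) (the $(a,b)$-analogue of Theorem~\ref{thmfulleq}) is what identifies its sum with the band's upper bound. Your real-$\alpha$ interpolation step is fine once the integer case is in place, and the same directional issue infects your fallback plan: Lemma~\ref{lemdiv}-style switches \emph{increase} $|\bb|$, so one must again take $\bb=\mathcal F$ for the switches to push in a direction compatible with the hypothesis.
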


%The deduction is similar to the one we made for Theorems~\ref{thm1} from Theorem~\ref{thmfull1}.\\

One easy corollary of \eqref{eqfull5} and \eqref{eqfull6}, which also appeared in \cite{KZ} and other places, is as follows:
\begin{cor}[\cite{KZ}]\label{corkz}
  Let $a,b>0$, $n>a+b$. Let $\aaa\subset {[n]\choose a},\ \bb\subset {[n]\choose b}$ be a pair of cross-intersecting families. Denote $t:=b+1-a.$ Then, if $|\bb|\le {n-t\choose a-1}$, then
\begin{equation}\label{eqcreasy} |\aaa|+|\bb|\le {n\choose a}.\end{equation}
Moreover, the displayed inequality is strict unless $|\bb|=0$.

If ${n-j\choose b-j}\le |\bb|\le {n-t\choose a-1}$ for integer $j\in [t, b]$, then
\begin{equation}\label{eqcreasy2} |\aaa|+|\bb|\le {n\choose a}-{n-j\choose a}+{n-j\choose b-j}.\end{equation}
Moreover, if the left inequality on $\bb$ is strict, then the inequality in the displayed formula above is also strict, unless $j=t+1$ and $|\bb| = {n-t\choose a-1}$.
\end{cor}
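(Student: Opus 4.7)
The plan is to reduce to lex families via Kruskal--Katona, then read off the bounds from the extremal lex-pair structure, invoking Theorem~\ref{thmfullcri} where needed for the monotonicity step. First I would apply Theorem~\ref{thmHil} to replace $\aaa,\bb$ by lex-initial families of the same sizes, preserving cross-intersection, and then use Proposition~\ref{cross2} to pass to a maximal cross-intersecting lex pair $\aaa=\mathcal L([n],P,a)$, $\bb=\mathcal L([n],Q,b)$ with $P,Q$ strongly intersecting in their largest element $j'$; this means $P\cap Q=\{j'\}$ and $P\cup Q=[j']$, so the pair is completely determined by the characteristic set $Q$.

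For part 1, the extremal configuration at the upper end $|\bb|={n-t\choose a-1}={n-t\choose b-t}$ is given by $Q=[t]$, $P=\{t\}$: here $\bb=\{B\in{[n]\choose b}:[t]\subseteq B\}$ and $\aaa=\{A\in{[n]\choose a}:A\cap[t]\neq\emptyset\}$, so $|\aaa|+|\bb|={n\choose a}-{n-t\choose a}+{n-t\choose a-1}$. The assumption $n>a+b$ gives $n-t\geq 2a$, from which ${n-t\choose a-1}<{n-t\choose a}$ strictly, yielding $|\aaa|+|\bb|<{n\choose a}$. For smaller positive $|\bb|$, the monotonicity inequality \eqref{eqfull5} of Theorem~\ref{thmfullcri} tells us that the sum $|\mathcal L(S_l,a)|+|\mathcal L(T_l,b)|$ strictly decreases along the sequence of resistant pairs, so the bound remains strict throughout $0<|\bb|\leq{n-t\choose a-1}$; equality trivially holds only when $|\bb|=0$.

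For part 2, the extremal configuration at $|\bb|={n-j\choose b-j}$ corresponds to $Q=[j]$, $P=\{j\}$, producing exactly $|\aaa|+|\bb|={n\choose a}-{n-j\choose a}+{n-j\choose b-j}$. For any $|\bb|$ in the claimed range, the lex structure of $\bb$ forces $\bb\supseteq\{B:[j]\subseteq B\}$, so cross-intersection with all these $B$ forces every $A\in\aaa$ to meet $[j]$, giving the easy bound $|\aaa|\leq{n\choose a}-{n-j\choose a}$. The main technical obstacle is the tight trade-off when $|\bb|>{n-j\choose b-j}$: the lex-next $b$-sets entering $\bb$ drop element $j$ and pick up larger elements, introducing further constraints on $\aaa$. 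I would handle this via an iterated bipartite switching argument in the spirit of Lemma~\ref{lemdiv}: the biregular bipartite graph on the ``excess'' sets (those in $\aaa$ incompatible with the newly added $B$'s versus those in $\bb$ not containing $[j]$) forces any gain of one set in $\bb$ to cost at least one set in $\aaa$, preserving the claimed bound. The strictness is then obtained by tracking when the biregular-graph inequality is tight; this isolates the exceptional case $j=t+1$, $|\bb|={n-t\choose a-1}$ (where the $j=t$ and $j=t+1$ bounds coincide by a Pascal-type identity) as the only non-boundary equality.
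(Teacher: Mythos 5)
Your route is essentially the paper's: the paper states Corollary~\ref{corkz} as an immediate consequence of \eqref{eqfull5}, \eqref{eqfull6} and part 3 of Theorem~\ref{thmfullcri}, i.e.\ of exactly the lex-reduction plus biregular bipartite switching machinery you invoke, and your part-2 sketch (forced $[j]$-star in $\bb$, then a Lemma~\ref{lemdiv}-style exchange for the excess, with the $j=t+1$ coincidence of bounds isolated as the equality case) is a hands-on rederivation of the relevant special case of that theorem. One caveat on part 1 as you wrote it: the monotonicity \eqref{eqfull5} runs only over the $(a,b)$-resistant pairs, whose $\bb$-sizes top out at $|\mathcal L(T_m,b)|={n-t-1\choose a-2}$, while $[t]$ itself is the extra configuration $T_{m+1}$ outside that list; so your combination of ``top-end computation at $|\bb|={n-t\choose a-1}$'' plus ``monotonicity for smaller $|\bb|$'' leaves the window ${n-t-1\choose a-2}<|\bb|<{n-t\choose a-1}$ not literally covered. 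The fix is the single biregular-graph comparison you already use elsewhere: after lex-ification every $B\in\bb$ contains $[t]$, so $(\aaa\cap\{A:A\cap[t]=\emptyset\})\cup\bb$ is independent in the disjointness graph between $\{A:A\cap[t]=\emptyset\}$ and $\{B:[t]\subseteq B\}$, whose larger part has size ${n-t\choose a}>{n-t\choose a-1}$; this gives $|\aaa|+|\bb|\le{n\choose a}$ for the entire range at once, with strictness unless $\bb=\emptyset$.
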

We note that the results in \cite{KZ} did not explicitly treat the equality case. However, it is clear that strictness of \eqref{eqcreasy} follows from \eqref{eqcreasy2}, and the equality case in \eqref{eqcreasy2} follows from Theorem~\ref{thmfullcri} part 3.

\section{Beyond Hilton--Milner. Proofs of Theorem~\ref{thmhk} and~\ref{thmclass2}}\label{sec4}
\begin{proof}[Proof of Theorem~\ref{thmhk}]
As we have already mentioned in the introduction, applying Corollary~\ref{corhm} with $u=k$, we conclude that \eqref{eqhk} holds for $k\ge 4$.

Next, in terms of Theorem~\ref{thmfull1}, we know that $\gamma_i = i$ for $i\in[k-3]$, and $\gamma_{k-2} = n-k$. Thus, for $k\ge 5$, $\gamma_2= 2$ and, using Theorem~\ref{thmfull1}, we conclude that any intersecting family $\ff\subset {[n]\choose k}$ with $\gamma(\ff)\ge 3$ is strictly smaller than the right hand side of \eqref{eqhk}. Therefore, if $\ff$ with $\gamma(\ff)\ge 2$ has size equal to the right hand side of \eqref{eqhk}, then $\gamma(\ff)=2$. But then any maximal $\ff$ must be isomorphic to $\mathcal J_i$ for some $i\ge 2$ (since the family is uniquely determined by the size of the intersection of the two sets contributing to the diversity). Finally, as we have mentioned in the introduction, $|\mathcal J_i|<|\mathcal J_2|$ for any $i>2$.
\end{proof}

Let us  recall the definition of shifting. For a given pair of indices $1\le i<j\le n$ and a set $A \subset [n]$, define its \underline{$(i,j)$-shift} $S_{ij}(A)$ as follows. If $i\in A$ or $j\notin A$, then $S_{ij}(A) = A$. If $j\in A, i\notin A$, then $S_{ij}(A) := (A-\{j\})\cup \{i\}$. That is, $S_{ij}(A)$ is obtained from $A$  by replacing $j$ with $i$.
The  $(i,j)$-shift $S_{ij}(\mathcal A)$ of a family $\mathcal A$ is as follows:
$$S_{ij}(\mathcal A) := \{S_{ij}(A)\ :\  A\in \mathcal A\}\cup \{A\ :\  A,S_{ij}(A)\in \mathcal A\}.$$

\subsection{Proof of Theorem~\ref{thmclass2}}\label{sec41}
Let us deal with the first part of the statement first. Assume that the statement does not hold and choose $\ff$, $t$, $\mm$, and $\ff'$ satisfying the requirements of the theorem, so that $|\ff|>|\ff'|$ and $\ff$ has the smallest possible diversity under these conditions.
First, we can clearly suppose that  $\gamma(\ff)\le {n-4\choose k-3}$. Indeed, the largest family with diversity bigger than ${n-4\choose k-3}$ is at most as large as $\mathcal H_4$ (cf. \eqref{eqhu}), and $\mathcal H_4$ contains  a copy of any $\G$ as in the statement of the theorem. Therefore, by the choice of $\ff$, it cannot have diversity larger than $\mathcal H_4$, otherwise we should have replaced $\ff$ with $\mathcal H_4$.

From now on, we suppose that $\gamma(\ff)\le {n-4\choose k-3}$. Suppose that $\bigcap_{M\in \mm}M = [2,t+1]$.

For $i=2,\ldots,t+1$, we may consecutively apply all the $S_{ij}$-shifts, where $j>i$, to $\ff$. Note that the family $\mm$ stays intact under these shifts, and the diversity and size of $\ff$ is not affected. Thus, we may assume that $\ff$ is invariant under these shifts. For any $j\ge 2$ and $S\subset [2,j]$, define $$\ff(S,[j]):=\big\{F\subset [j+1,n]\ :\  F\cup S\in \ff(\bar 1)\big\}$$ and, for any family $\mathcal R\subset {[n]\choose k}$, let $\partial\mathcal R:=\bigcup_{R\in \mathcal R} {R\choose k-1}$ denote the \underline{shadow} of $\mathcal R$. (We gave an equivalent definition in Theorem~\ref{thmkk}.)

An important consequence of the shifts we made is that, for any $i\in [2,t+1]$ and $S\subset [2,i-1]$, we have \begin{equation}\label{eqcard} \big|\ff(S\cup \{i\},[i])\big|\ge \big|\partial \ff(S,[i])\big|.\end{equation} Actually, we have $\ff(S\cup \{i\},[i])\supset \partial \ff(S,[i])$.  Indeed, for any $F'\in \partial \ff(S,[i])$, there exists $j\in [i+1,n]$ and $F\in \ff(S,[i])$, such that $F'\cup \{j\}=F$, and, since we the $(i,j)$--shift, we have $F'\cup \{i\}\cup S\in \ff$ and $F'\in \ff(S\cup \{i\},[i])$.

We have $|\ff(\bar 1)|\le {n-4\choose k-3}$ by assumption. Let us show that, for any $i\in [2,4]$, we have \begin{equation}\label{eqcard2} |\ff([2,i-1],[i])|\le {n-5\choose k-3}.\end{equation} (Naturally, we put $[2,1]=\emptyset$.)
Assume that \eqref{eqcard2} does not hold. Then, using the Kruskal--Katona theorem, it is not difficult to see that $|\partial \ff([2,i-1],[i])|> {n-5\choose k-4}$,\footnote{Assume that we have equality in \eqref{eqcard2}. The Kruskal--Katona theorem in Lovasz' form states that if $\mathcal K$ a family of $m$-element sets and, for some real $x\ge m$, we have $|\mathcal K|\ge {x\choose m}$, then $|\partial \mathcal K|\ge {x\choose m-1}$. In our case, $\mathcal K:=\ff([2,i-1],[i])\subset {[i+1,n]\choose k-i+1}$ and ${n-5\choose k-3}={x\choose k-i+1}$, where $x\le n-5$ for any $i\in [2,4].$ Finally, we have $\frac{{x\choose k-i+1}}{{x\choose k-i}}=\frac {x-k+i}{k-i+1}\le \frac {n-k-1}{k-3}=\frac{{n-5\choose k-3}}{{n-5\choose k-4}}$, and thus $|\partial \mathcal K|> {x\choose k-i}\ge {n-5\choose k-4}$.
}
and thus
$$\big|\ff(\bar 1)\big|\ge \big|\ff([2,i-1],[i])\big|+\big|\ff([2,i],[i])\big|\overset{\eqref{eqcard}}{\ge} \big|\ff([2,i-1],[i])\big|+\big|\partial \ff([2,i-1],[i])\big|>{n-4\choose k-3},$$
a contradiction. For each $i\in [2,4]$, consider the following bipartite graph $G_i$. The parts of $G_i$ are
\begin{align*}
\mathcal P_a(i) :=\ &\Big\{P\ :\ i\in P\in {[2,n]\choose k-1}\Big\},\\
\mathcal P_b(i) :=\ &\Big\{P \ :\ i\notin P \in {[2,n]\choose k}\Big\},
\end{align*}
and edges connect disjoint sets. We identify $\mathcal P_a(i)$ with ${X\choose k-2}$ and $\mathcal P_b(i)$ with ${X\choose k}$, where $X := [2,n]\setminus \{i\}$, $|X|=n-2>k+k-2$.

Due to \eqref{eqcard2}, we have $|\mathcal P_b(i)\cap \ff(\bar 1)|\le {n-5\choose k-3}={|X|-3\choose (k-2)-1}$ for $i=2$. Thus, we can apply \eqref{eqcreasy} to
\begin{align*}
\aaa:=\ff(1)\cap \mathcal P_a(i) \ \ \ \ \text{and}\ \ \ \
\bb:=\ff(\bar 1)\cap \mathcal P_b(i),
\end{align*}
where $i=2$, and conclude that $|\aaa|+|\bb|\le {|X|\choose k-2}$ (and the inequality is strict unless $\bb=\emptyset$). Therefore, removing $\ff(\bar 1)\cap \mathcal P_b(2)$ from $\ff(\bar 1)$ and adding sets from $\mathcal P_a(2)$ to $\ff(1)$, we get a pair of families with larger sum of cardinalities. Moreover, the new pair is cross-intersecting: all sets in $\ff(\bar 1)\setminus \mathcal P_b(2)$, as well as the sets from $\mathcal P_a(2)$, contain $2$. Thus, by our choice of $\ff$ we may assume that $\bb=\emptyset$ and so all sets in $\ff(\bar 1)$ must contain $2$. Repeat this argument for $i=3$ and $i=4$.
Concluding, we may assume that all sets in $\ff(\bar 1)$ contain $[2,4]$.

Next, for each $i\ge 5$, consider the following slightly different bipartite graph $G_i'$. The parts of $G_i$ are
\begin{align*}
\mathcal P_a'(i):=\ &\Big\{P\ :\  P\in {[2,n]\choose k-1},\ P\cap [2,i]=\{i\}\Big\},\\
\mathcal P_b'(i):=\ &\Big\{P\ :\  P\in {[2,n]\choose k},\ P\cap [2,i]=[2,i-1]\Big\},
\end{align*}
and edges connect disjoint sets. We identify $\mathcal P_a'(i)$ with ${[i+1,n]\choose k-2}$ and $\mathcal P_b'(i)$ with ${[i+1,n]\choose k-i+1}$.

We have $|\mathcal P_b'(i)\cap \ff(\bar 1)|\le {n-i\choose k-i+1}\le {n-5\choose k-3}$. Thus, for each $i=5,\ldots, t+1$ we can apply \eqref{eqcreasy} to
\begin{align*}
\aaa:=\ff(1)\cap \mathcal P_a'(i) \ \ \ \ \text{and}\ \ \ \ \
\bb:=\ff(\bar 1)\cap \mathcal P_b'(i),
\end{align*}
and conclude that $|\aaa|+|\bb|\le {n-i\choose k-2}$ (and the inequality is strict unless $\bb=\emptyset$). Arguing as before, we  may assume that all sets in $\ff(\bar 1)$ contain $[2,t+1]$. \vskip+0.1cm

Put $\mm = \{M_1,\ldots, M_z\}$. Since $\mm$ is minimal, for each $M_l\in \mm$, $l\in [z]$, there is \begin{equation}\label{eqil} i_l\in \Big(\bigcap_{M\in \mm\setminus \{M_l\}}M\Big)\setminus \Big(\bigcap_{M\in \mm}M\Big).\end{equation}
We assume that $i_l = t+l+1$, $l\in[z]$. In particular, $\{i_1,\ldots, i_z\} = [t+2,t+z+1]$.

Next, for each $i=t+2,\ldots, t+z+1$ consider the bipartite graph $G'_i$, defined above. We can apply \eqref{eqcreasy2} with $k-2,k-i+1, k-i+1, 4-i$ playing roles of $a,b,j,t$, respectively. Indeed, we know that $|\bb|\ge {n-i\choose (k-i+1)-(k-i+1)}=1$ since $M_{i-t-1}\in \mathcal P_b'(i)$ (note that $M_l\notin \mathcal P_b'(i)$ for $l\ne i-t-1$, since all of them contain $i_l$ due to the definition of $i_l$). Therefore, $|\mathcal A|+|\mathcal B|\le {n-i\choose k-2}-{n-k-i\choose k-2}+1$. For $k\ge 5$, the inequality is strict unless $\bb=\{M_{i-t-1}\}$. For $k=4$, we apply it for $i=4$, thus $n-5,2,1,1,0$ play the roles of $n,a,b,j,t$, respectively, and so the inequality is strict unless $|\bb| = {n-5\choose 1}$, i.e., $\bb$ contains all possible $1$-element sets. Thus,  we may replace $\aaa,\ \bb$ with $\bb':=\{M_{i-t-1}\}$ and $\aaa':=\{A\in {[i+1,n]\choose k-2}\ :\  A\cap M_{i-t-1}\ne \emptyset\}$. The resulting family is cross-intersecting. Thus, by our choice of $\ff$, all sets in $\ff(\bar 1)\setminus \mm$ must contain $i$. Iterating this procedure for  $i=t+2,t+3,\ldots, t+z+1$, we may assume that any set in $\ff(\bar 1)\setminus \mm$ must contain the set $[2, t+z+1]$.
Put $t':=t+z+1$. If $\ff(\bar 1)\setminus \mm$ is non-empty then, for each $l \in [z]$, the set $M_l\setminus [2,t']$  must be non-empty: otherwise, $t'-1>k$. If $\ff(\bar 1)\setminus \mm$ is empty then $\ff(\bar 1)=\mm$, which contradicts $|\ff|>|\ff'|$. Note that $t'-1\ge z+3$.

For each $l\in [z]$, select
one element $i_l\in M_l\cap [t'+1,n]$. Note that $i_l$ may coincide. Put $I:=\{i_l\ :\ l\in [z]\}$. Consider the bipartite graph $G(t',I)$ with parts
\begin{align*}
\mathcal P_a(t',I):=\ &\Big\{P \ :\ P\in {[2,n]\choose k-1},\ I\subset P,\  [2,t']\cap P=\emptyset\Big\},\\
\mathcal P_b(t',I):=\ &\Big\{P\ :\ P\in {[2,n]\choose k},\ [2,t']\subset P,\ I\cap P=\emptyset\Big\},
\end{align*}
and edges connecting disjoint sets. We identify $\mathcal P_a(t',I)$ with ${Y\choose k-z'}$, $z'\le z+1$, and $\mathcal P_b(t',I)$ with ${Y\choose k-z''}$, $z''=t'-1\ge z+3$, where $Y = [t'+1,n]\setminus I$, $|Y|=n-z''-z'$. In particular, $|Y|>k-z'+k-z''$. By the choice of $I$, we have $|\mm\cap \mathcal P_b(t',I)| =\emptyset$. %Indeed, only the set $G_l,\ \{l\}=[z]\setminus S$ may be left.
Denote
\begin{align*}
\aaa:=\ff(1)\cap \mathcal P_a(t',I) \ \ \ \ \text{and}\ \ \ \ \
\bb:=\ff(\bar 1)\cap \mathcal P_b(t',I).
\end{align*}
We have $k-z'>k-z''$, and, therefore, we may apply  \eqref{eqcreasy} with $a:=k-z',\ b:=k-z'',\ j:=k-z''$ (the upper bound on $|\bb|$ becomes trivial in that case) and conclude that $|\aaa|+|\bb|\le {|Y|\choose k-z'}$ (and the inequality is strict unless $\bb=\emptyset$).
As before, replacing $\aaa$ with $\mathcal P_a(I)$ and $\bb$ with $\emptyset$ does not decrease the sum of sizes of the families and preserves the cross-intersecting property.
Thus, by the choice of $\ff$, we must have $\bb=\emptyset$.

Repeating this for all possible choices of $I$, we arrive at the situation when any set from $\ff(\bar 1)\setminus\mm$ must intersect {\it any} such set $I$. Clearly, this is only possible for a set $F$ if $F\supset M_l\cap[t'+1,n]$. But this implies that $|F|>|M_l|$, which is impossible. Thus $\ff(\bar 1) = \mm$, which contradicts $|\ff|>|\ff'|$, so the proof of \eqref{eqclass1} is complete.

Finally, uniqueness follows from the fact that the inequalities \eqref{eqcreasy}, \eqref{eqcreasy2} are strict unless the family $\bb$ has sizes $0$ and $1$, respectively, or $k=4$ (we mentioned it at every application). Therefore, if $\ff(\bar 1)\ne \mm$, then at some point we would have had a strict inequality in the application of \eqref{eqcreasy}, \eqref{eqcreasy2}.\\

Let us now prove the moreover part of the statement of Theorem~\ref{thmclass2}. First, if there is no family $\mathcal M\subset \ff(\bar 1)$, minimal w.r.t. common intersection and such that $|\bigcap_{M\in\mm}M|=t$, then we apply $(i,j)$-shifts to $\ff$ for $2\le i<j\le n$ until such family appears. Since common intersection of any subfamily of $\ff(\bar 1)$ may change by at most one after any shift, either we obtain the desired $\mm$ or we arrive at a shifted family $\ff(\bar 1)$ without such $\mm$. But the latter is impossible. Indeed, for a shifted intersecting family $\ff(\bar 1)$, we have $|\bigcap_{F\in \ff(\bar 1)}F|=[2,j]$ for some $j\ge 2$, and in that case $[2,j]\cup [j+2,k+2]\in \ff(\bar 1)$. But then the set $F_{j'}:=[2+j']\cup [j'+2,k+2]\in\ff(\bar 1)$, where $j\le j'\le k+1$. It is clear that the sets $F_j,\ldots, F_{k-t+j+1}$ form a subfamily that has common intersection of size $t$.

Fix $\mathcal M\subset \ff(\bar 1)$, minimal w.r.t. common intersection, such that $|\bigcap_{M\in\mm}M|=t$. Applying the first part of Theorem~\ref{thmfull2}, we may assume that $\ff(\bar 1)=\mm$. Clearly,  the number of sets in $\ff(1)$ passing through $\bigcap_{M\in\mm}M$ is always the same, independently of the form of $\mm$. Thus, we need to analyze the sum of sizes of the family $\ff':=\big\{F\in \ff(1)\ :\  F\cap \bigcap_{M\in\mm}M=\emptyset\big\}$ and $\mm':= \big\{M\setminus \bigcap_{M\in\mm}M\ :\  M\in \mm\big\}.$ Note that $\mm'$ and $\ff'$ are cross-intersecting, moreover, $\tau(\mm')=2$ and $\mm'$ is minimal w.r.t. this property.

The following lemma concludes the proof of the theorem. Let us first give some definitions.
Given integers $m>2s$, let us denote by $\mathcal T_2'(s):=\{[s], [s+1,2s]\}$. Let $\ff_2'(s)\subset {[m]\choose k-1}$ stand for the largest family, cross-intersecting with $\mathcal T_2'(s)$. Let $\ff_2(s)\subset {[m]\choose k-1}$ stand for the largest family, cross-intersecting with $\mathcal T_2(s)$  (cf. \eqref{deft2}).

\begin{lem}\label{lemmin} Let $k\ge s$ and $m\ge k+s$ be integers, $k\ge 4$. Given a family $\mathcal H\subset {[m]\choose s}$ with $\tau(\mathcal H)=2$ and minimal w.r.t. this property, consider  the maximal family $\ff\subset {[m]\choose k-1}$ that is cross-intersecting with $\mathcal H$. Then the unique maximum of $|\ff|+|\mathcal H|$ is attained when $\mathcal H$ is isomorphic to $\mathcal T_2'(s)$ (and $\ff$ is thus isomorphic to $\ff_2'(s)$).

If we additionally require that $\mathcal H$ is intersecting\footnote{Note that this is equivalent to requiring that $|\mathcal H|>2$.} then the maximum of $|\ff|+|\mathcal H|$ is attained for $\mathcal H$ and $\ff$ isomorphic to $\mathcal T_2(s)$ and $\ff_2(s)$. The maximal configuration is unique if $s\ge k$.
\end{lem}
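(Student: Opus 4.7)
My plan is to split into two cases depending on whether $\mathcal{H}$ is intersecting. If $\mathcal{H}$ contains two disjoint sets $H_1, H_2$, then $\{H_1, H_2\}$ already has $\tau \ge 2$ (no element hits two disjoint sets), so by the minimality of $\mathcal{H}$ we must have $\mathcal{H} = \{H_1, H_2\}$. Hence $\mathcal{H}$ is isomorphic to $\mathcal{T}_2'(s)$, settling this subcase immediately.

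Otherwise $\mathcal{H}$ is intersecting. Using $\tau(\mathcal{H}) = 2$, fix a 2-element cover, say $\{1, 2\}$ up to relabeling, so every $H \in \mathcal{H}$ contains $1$ or $2$. By minimality, for each $H \in \mathcal{H}$ there exists a witness element $x_H \in \bigcap_{H' \neq H} H'$ with $x_H \notin H$; in particular $\bigcap_{H \in \mathcal{H}} H = \emptyset$. My aim is to show that among such $\mathcal{H}$, the sum $|\mathcal{H}| + |\ff|$ is maximized by $\mathcal{T}_2(s)$. First, I would reduce to $|\mathcal{H}| = 3$: for any intersecting, minimal $\tau = 2$ family with $|\mathcal{H}| \ge 4$, a bipartite switching argument in the spirit of Lemma~\ref{lemdiv}, trading ``extra'' sets of $\mathcal{H}$ against newly admissible sets of $\ff$, should not decrease $|\mathcal{H}| + |\ff|$, while the resulting three-set family remains intersecting and minimal with $\tau = 2$. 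This reduction I expect to be the main obstacle, since minimality is not obviously preserved under naive switching: one must carefully choose which set to remove so that the remaining witnesses stay valid.

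For the three-set case $\mathcal{H} = \{H_1, H_2, H_3\}$, I would enumerate configurations via the witnesses. Either all three witnesses $x_{H_1}, x_{H_2}, x_{H_3}$ are distinct, in which case each $H_i$ must contain the other two witnesses, which together with the cover $\{1, 2\}$ pins the structure close to $\mathcal{T}_2(s)$ up to a few free parameters; or exactly two witnesses coincide, yielding a structurally different family. A direct inclusion-exclusion on $\bigl|\bigcup_{i=1}^3 \binom{[m] \setminus H_i}{k-1}\bigr|$ computes $|\ff|$ in each configuration, and comparing shows that $\mathcal{T}_2(s)$ yields the largest $|\ff|$, with strict inequality when $s \ge k$, giving the claimed uniqueness in that range.

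Finally, the global comparison follows from the direct computation
\[
\bigl(|\mathcal{T}_2'(s)| + |\ff_2'(s)|\bigr) - \bigl(|\mathcal{T}_2(s)| + |\ff_2(s)|\bigr) = \binom{m-s-1}{k-2} - \binom{m-2s}{k-2} - 1,
\]
which is strictly positive under $m \ge k + s$ and $k \ge 4$ (using $s \ge 2$; the edge case $s = 1$ is degenerate since $\mathcal{T}_2(1)$ is ill-defined). This confirms $\mathcal{T}_2'(s)$ as the unique global maximum, as claimed.
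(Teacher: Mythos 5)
The core of your plan---reducing an arbitrary intersecting, minimal $\tau=2$ family to a three-set family by a switching argument that ``should not decrease'' $|\mathcal H|+|\ff|$ while keeping the family intersecting and minimal---is exactly the step you leave unproven, and you yourself flag it as the main obstacle; as written this is a genuine gap, and it also leaves the first part unsettled, since observing that a non-intersecting minimal family must be a disjoint pair does not by itself show that intersecting families do worse. The paper's proof avoids any such reduction: from minimality it extracts pairwise distinct witness elements $i_l\in\bigcap_{j\ne l}H_j\setminus H_l$, bounds $z:=|\mathcal H|\le s+1$ by Bollob\'as's set-pair inequality, and then bounds $|\ff|\le f(z)$ by partitioning the sets of $\ff$ according to the first witness (and then the first remaining element of $H_1$) that they contain. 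Since $f(z-1)-f(z)\ge\binom{m-s-2}{k-3}>1$, one gets $|\mathcal H|+|\ff|\le f(z')+z'$ for every $z\ge z'$, with equality forcing $z=z'$; this disposes of all sizes at once and never modifies $\mathcal H$, so the preservation-of-minimality problem never arises. Your closing identity, equivalent to $(f(2)+2)-(f(3)+3)=\binom{m-s-1}{k-2}-\binom{m-2s}{k-2}-1>0$, is correct and matches the paper's comparison of $\mathcal T_2'(s)$ with $\mathcal T_2(s)$, but on its own it only compares those two named configurations.

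The three-set analysis is also off track as sketched. For a minimal family the witnesses are automatically pairwise distinct (if $x_{H_1}=x_{H_2}$, that element would lie both in and outside $H_2$), so your dichotomy ``all witnesses distinct vs.\ exactly two coincide'' is vacuous; the meaningful case distinction is by the sizes of the pairwise intersections. In the paper: if some $|H_i\cap H_j|=s-1$ the family is isomorphic to $\mathcal T_2(s)$; if all three pairwise intersections are the singleton witnesses, a genuinely nontrivial argument is needed---a $(j_1,j_2)$-shift followed by adjoining an explicit extra family to $\ff$, which uses $m\ge 3s-3$ together with $s\ge k$ and is precisely where uniqueness may fail for $s<k$; and if $2\le|H_1\cap H_2|\le s-2$ a refined version of the count gives $f'(3)<f(3)$. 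An exact inclusion--exclusion formula for $|\ff|$ in terms of the union sizes does exist, but ``comparing shows $\mathcal T_2(s)$ is largest'' is not automatic: you must optimize over all feasible intersection patterns subject to minimality and $\tau=2$, you need the non-strict bound for every $s\le k$ (the second part claims the maximum value for all such $s$, uniqueness only for $s=k$), and the singleton-intersection configuration is exactly the delicate one. So the proposal identifies the right targets and the correct final numerical comparison, but both the reduction in $|\mathcal H|$ and the three-set optimization remain unestablished.
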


We can apply the first part of Lemma~\ref{lemmin} in our situation with $[2,n]\setminus \bigcap_{M\in\mm}M$ playing the role of $[m]$ and $k-t$ playing the role of $s$. Note that $|[2,n]\setminus \bigcap_{M\in\mm}M| = n-t-1\ge 2k-t$, and so the condition on $m$ from the lemma is satisfied. This proves \eqref{eqclass2}, moreover, we get that for $k\ge 5$ the inequality was strict unless $|\ff(\bar 1)|=2$ in the first place. But then $\ff$ is isomorphic to a subfamily of $\mathcal J_{i}$ for $i\ge i'$, and the equality is possible only if $\ff$ is isomorphic to $\mathcal J_i$.
Thus, to conclude the proof of Theorem~\ref{thmfull2}, we only have to prove the lemma.

We note that the second, more complicated, part of Lemma`\ref{lemmin} is not needed for this application, however, it will be crucial for the completion of the proof of Theorem~\ref{thmtau3}.

\begin{proof}[Proof of Lemma~\ref{lemmin}]
Let us first express $|\ff'_2(s)|$. It is not difficult to see that
\begin{footnotesize}\begin{align}
\notag  |\ff'_2(s)|\  =\ & {m-1\choose k-2}-{m-s-1\choose k-2} + \\
\notag   & {m-2\choose k-2}-{m-s-2\choose k-2}+\\
\notag   &\cdots \\
\label{eqfs}   &{m-s\choose k-2}-{m-2s\choose k-2}.
\end{align}\end{footnotesize}
Indeed, in the first line we count the sets containing $1$ that intersect $[s+1,2s]$, in the second line we count the sets not containing $1$, containing $2$ and intersecting $[s+1,2s]$ etc.

We can actually bound the size of $\ff$ for any $\mathcal H$ in a similar way. Suppose that $z:=|\mathcal H|$ and $\mathcal H = \{H_1,\ldots, H_z\}$. Since $\mathcal H$ is minimal, for each $l\in[z]$ there exists an element $i_l$ such that $i_l\notin H_l$ and $i_l\in \bigcap_{j\in[z]\setminus \{l\}} H_l$. (All $i_l$ are of course different, cf. also \eqref{eqil}.) Applying Bollobas' set-pairs inequality \cite{Bol} to $\mathcal H$ and $\{i_{l}\ :\ l\in[z]\}$, we get that $|\mathcal H|\le {s+1\choose s}=s+1$.

For each $l=2,\ldots, z$, we count the sets $F\in\ff$ such that $F\cap \{i_2,\ldots, i_l\}=\{i_l\}$. Such sets must additionally intersect $H_{l}\setminus \{i_2,\ldots, i_{l-1}\}$. Note that $H_1\supset \{i_2,\ldots, i_z\}$.
Assuming that $H_1\setminus \{i_2,\ldots,i_z\} = \{j_1,\ldots, j_{s-z}\}$, for each  $l=1,\ldots, s-z$ we further count the sets $F\in \ff$ such that $F\cap \{i_2,\ldots, i_z, j_1,\ldots, j_l\}=\{j_l\}$. Such sets must additionally intersect $H_{i}\setminus \{i_2,\ldots, i_{z}\}$ for some $i\in [2,z]$.\footnote{Otherwise, $\mathcal H$ would have common intersection. Note that $H_{i}\setminus \{i_2,\ldots, i_{z}\}$ is a set of size $s-z+2$.}
 Since $F\cap H_1\ne \emptyset$ for any $F\in \ff$ and given that the classes for different $l$ are disjoint, we clearly counted each set from $\ff$ exactly once. (However, we may also count some sets that are not in $\ff$.) Doing this count, we get the following bound on $\ff$.
\begin{footnotesize}\begin{align}
 \notag |\ff|\ \le \ & {m-1\choose k-2}-{m-s-1\choose k-2} + \\
\notag   & {m-2\choose k-2}-{m-s-1\choose k-2}+\\
\notag   &\cdots\\
\notag   & {m-z+1\choose k-2}-{m-s-1\choose k-2}+\\
\notag   & {m-z\choose k-2}-{m-s-2\choose k-2}+\\
\notag   &\cdots \\
\label{eqfz}   &{m-s\choose k-2}-{m-2s-2+z\choose k-2}\ =:\ f(z).
\end{align}\end{footnotesize}
Remark that \eqref{eqfz} coincides with \eqref{eqfs} when substituting $z=2$.
We have $f(z-1)-f(z)\ge {m-s-1\choose k-2}-{m-s-2\choose k-2}={m-s-2\choose k-3}> 1$ (here we use that $m\ge s+k$ and $k\ge 4$). Therefore, for any $z\ge z'$, \begin{equation}\label{eqz'}|\mathcal H|+|\ff|\le f(z')+z',\end{equation} and the inequality is strict unless $z=|\mathcal H|=z'$.

At the same time, we have $|\ff'_2(s)|+|\mathcal T'_2(s)|=f(2)+2$ and $|\ff_2(s)|+|\mathcal T_2(s)|=f(3)+3$! (The former we have seen above, and the latter is easy to verify by doing exactly the same count.) Since, up to isomorphism, there is only one family $\mathcal H\subset {[m]\choose s}$ of size $2$ with $\tau(\mathcal H)=2$, we immediately conclude that the first part of the statement holds. To deduce the second part, we only need to show that, among all possible choices of $\mathcal H$ of size $3$, the only one (up to isomorphism) that attains equality in \eqref{eqz'} is $\mathcal H = \mathcal T_2(s)$.

Recall that, for uniqueness in the second part of the lemma, we have additional condition $s\ge k$.
If there are two sets $H',H''\in \mathcal H$ such that $|H'\cap H''|=s-1$, then $\mathcal H$ is isomorphic to $\mathcal T_2(s)$. Therefore, in what follows we assume that $|H'\cap H''|\le s-2$ for any $H',H''\in\mathcal H$.

Let us deal with the case when $H_{l}\cap H_{l'} = i_{l''}$ for any $\{l,l',l''\}=[3]$. Note that this implies that
\begin{equation}\label{eqbign}
  m\ge 3s-3.
\end{equation} Since $s\ge k \ge 4$, there are elements $j_l\in H_l\setminus (H_{l'}\cup H_{l''})$, $\{l,l',l''\}=[3]$. Perform the $(j_1,j_2)$-shift on $\ff\cup \mathcal H$ and denote $\ff':=S_{j_1j_2}(\ff)$. Clearly, the sizes of the families stay the same and the resulting families are cross-intersecting. The family, $S_{j_1j_2}(\mathcal H)$  has covering number $2$. Moreover, we can add new sets to $\ff'$ without violating the cross-intersecting property. Indeed, consider the families
\begin{align*}\mathcal A\ :=&\ \Big\{F\in {[m]\choose k-1}\ :\  j_3\in F,\ F\cap (H_1\cup H_2)=\{j_2\}\Big\},\\
\mathcal A'\ :=&\ \Big\{F\in {[m]\choose k-1}\ :\  j_3\in F,\ F\cap (H_1\cup H_2)=\{j_1\}\Big\}.\end{align*}
It is easy to see that actually $\aaa' = S_{j_1j_2}(\aaa)$. Moreover, $\aaa\cap \ff = \emptyset$ since sets from $\aaa$ do not intersect $H_1$ and $\aaa'\cap \ff' = \emptyset$ since $\aaa'\cap \ff=\emptyset$ (again, since sets from $\aaa'$ do not intersect $H_2$) and $\aaa\cap \ff = \emptyset$. At the same time, $\aaa'$ may be included into $\ff'$ since sets from $\aaa'$ intersect all sets in $S_{j_1j_2}(\mathcal H)$. Finally, $\aaa = {[m]\setminus X\choose k-3}$, where $|X| = 2s$, and thus $|\aaa|\ge {k-3\choose k-3}$ due to $m\ge 3s-3\ge k+2s-3$, so $\aaa$ is non-empty, and thus $\mathcal H$ was not optimal.

Finally, we may assume that $|H_1\cap H_2|\in [2,s-2]$. Then we do the a similar count as for \eqref{eqfz}. The first two steps (with $i_2,i_3$) are the same. The part with $j_i$ is, however, slightly modified. Take $j'\in (H_1\cap H_2)\setminus \{i_3\}$ and $j'' \in H_1\setminus (H_2\cup \{i_2\}$. Such choices are possible due to $|H_1\cap H_2|\in [2,s-2]$. Count the sets $F\in \ff$ such that $F\cap \{i_2,i_3,j'\}=j'$. They must intersect $H_3\setminus \{i_2\}$. Next, crucially, count the sets in $F\in \ff$ such that $F\cap \{i_2,i_3,j',j''\}=j''$. They must intersect $H_2\setminus \{i_2,j'\}$ (note the size of this set is $s-2$ instead of $s-1$). The remaining count is the same: let $\{j_1,\ldots, j_{s-4}\}:=H_1\setminus \{i_2,i_3,j',j''\}$ and, for each $l\in[s-4]$, count the sets $F\in \ff$ such that $F\cap \{i_2,i_3,j',j'',j_1,\ldots, j_{l}\} =j_l$. They must additionally intersect either $H_2\setminus \{i_2,j'\}$, or $H_3\setminus \{i_3\}$. Thus, we obtain the following bound.
\begin{footnotesize}\begin{align}
 \notag |\ff|\ \le \ & {m-1\choose k-2}-{m-s-1\choose k-2} + \\
\notag   & {m-2\choose k-2}-{m-s-1\choose k-2}+\\
\notag   & {m-3\choose k-2}-{m-s-2\choose k-2}+\\
\notag   & {m-4\choose k-2}-{m-s-\textbf{2}\choose k-2}+\\
\notag   & {m-5\choose k-2}-{m-s-4\choose k-2}+\\
\notag   &\cdots\\
\label{eqfz}   &{m-s\choose k-2}-{m-2s+1\choose k-2}\ =:\ f'(3).
\end{align}\end{footnotesize}
We have $f(3)-f'(3) = {m-s-2\choose k-2}-{m-s-3\choose k-2} = {m-s-3\choose k-3}\ge 1$ due to $m\ge s+k$, and thus $|\ff|\le f'(3)<f(3) = |\ff_2(s)|$. Thus, in the assumption $s\ge k$ and if $\mathcal H$, $|\mathcal H|\ge 3$, is not isomorphic to $\mathcal T_2(s)$, we  have strict inequality in \eqref{eqz'} for $z'=3$. The lemma is proven.
\end{proof}

\section{Families with fixed covering number and maximum degree proportion}\label{sec5}

One of our main tools  for this section is the following structural result due to Dinur and Friedgut \cite{DF}. We say that a family $\mathcal J\subset 2^{[n]}$ is a {\it $j$-junta}, if there exists a subset $J\subset [n]$ of size $j$ (the {\it center} of the junta), such that the membership of a set in $\ff$ is determined only by its intersection with $J$, that is, for some family $\mathcal J^*\subset 2^{J}$ (the {\it defining family}) we have $\ff=\{F\ :\ F\cap J\in \mathcal J^*\}$.

\begin{thm}[\cite{DF}]\label{thmdf} For any integer $r\ge 2$, there exist functions $j(r), c(r)$, such that for any integers $1 < j(r) < k < n/2$, if $\ff\subset {[n]\choose k}$ is an intersecting family with $|\ff|\ge c(r){n-r\choose k-r}$, then there exists an intersecting $j$-junta $\mathcal J$ with
$j\le j(r)$ and \begin{equation}\label{eqDF} |\ff\setminus\mathcal J|\le c(r){n-r\choose k-r}.\end{equation}
\end{thm}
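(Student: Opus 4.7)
The plan is to prove Theorem~\ref{thmdf} by transferring the problem to the $p$-biased Boolean cube with $p:=k/n$ and then invoking Friedgut's junta theorem for monotone Boolean functions. Given $\ff\subset{[n]\choose k}$, consider the monotone upset $\widehat{\ff}:=\{S\subset[n]:F\subset S\text{ for some }F\in\ff\}$ and the product measure $\mu_p$ on $\{0,1\}^n$. A standard slice-to-cube comparison shows that $|\ff|\ge c(r){n-r\choose k-r}$ implies $\mu_p(\widehat{\ff})\ge c'(r)\,p^r$ for some $c'(r)$, because the upset generated by a fixed $r$-set $R$ has $\mu_p$-measure exactly $p^r$, and ${n-r\choose k-r}/{n\choose k}$ is comparable to $p^r$.

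The key analytic input is a total-influence bound for the intersecting monotone upset $\widehat{\ff}$. Since $\widehat{\ff}$ is intersecting, the biased Erd\H os--Ko--Rado inequality gives $\mu_q(\widehat{\ff})\le q$ for every $q\le 1/2$. Combining this with the lower bound $\mu_p(\widehat{\ff})\ge c'(r)\,p^r$ and the Margulis--Russo identity $\frac{d}{dq}\mu_q(\widehat{\ff})=I^{(q)}(\widehat{\ff})$, averaging across a small neighbourhood of biased parameters around $p$ yields $\sum_i I_i^{(q)}(\widehat{\ff})\le K(r)$ for some $q$ close to $p$, and, after a smoothing step, for $q=p$ itself. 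With total influence bounded by $K(r)$, Friedgut's junta theorem produces a monotone $j$-junta $\mathcal G$ on $\{0,1\}^n$ with centre $J\subset[n]$ of size $j\le j(r)$ and $\mu_p(\widehat{\ff}\,\triangle\,\mathcal G)\le \delta(r)\,p^r$, where $\delta(r)$ can be made arbitrarily small by enlarging $j(r)$.

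Next, I would modify $\mathcal G$ so that $\mathcal J:=\mathcal G\cap{[n]\choose k}$ is intersecting. If two disjoint sets $A,B\subset J$ both lie in the defining family $\mathcal G^*$, then almost all $k$-sets in $\widehat{\ff}$ extending $A$ are disjoint from those extending $B$, contradicting the intersecting property of $\ff$ as long as the discrepancy $\delta(r)p^r$ is much smaller than $p^{2j(r)}$. Hence discarding at most one of each such disjoint pair from $\mathcal G^*$ yields an intersecting junta $\mathcal J$, with the discarded mass absorbed in $\delta(r)$. Finally, the $\mu_p$-measure bound $\mu_p(\widehat{\ff}\setminus \mathcal G)\le \delta(r)\,p^r$ transfers back to the slice via concentration of ${[n]\choose k}$ near the layer of size $k$ in $\mu_p$, producing $|\ff\setminus \mathcal J|\le c(r){n-r\choose k-r}$ after enlarging $c(r)$ to absorb constants from the measure conversion.

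The hard part will be the quantitative conversions in both directions between the slice and the biased cube, together with the total-influence bound at the precise value $q=p$ (rather than at some nearby $q^*$). The cleanest alternative is to run the argument directly on the slice using hypercontractive inequalities for the Johnson scheme, which avoids the influence-transfer obstacle but requires the slice-analogue of Friedgut's theorem. Either way, the decisive combinatorial step is certifying that the approximating junta can be chosen \emph{intersecting}, and not merely close to an intersecting family; this rests on the smallness of $\delta(r)$ relative to the mass of each constituent piece of the defining family, which is where the dependence $c(r)\gg 1$ in the hypothesis becomes essential.
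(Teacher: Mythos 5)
You should first be aware that the paper does not prove this statement at all: Theorem~\ref{thmdf} is quoted verbatim from Dinur and Friedgut \cite{DF} and is used in Section~\ref{sec5} purely as a black box, so there is no internal proof to compare against. Judged on its own terms, your sketch does reproduce the architecture of the Dinur--Friedgut argument (pass to the up-closure $\widehat{\ff}$, work with $\mu_p$ for $p=k/n$, use $\mu_q(\widehat{\ff})\le q$ for monotone intersecting upsets, extract a bounded-total-influence parameter via Margulis--Russo, apply Friedgut's biased junta theorem, then clean up the junta to make it intersecting), which is the correct route.

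As a proof, however, it has genuine gaps, and they sit exactly at the points you yourself flag as ``the hard part''; in effect the sketch defers the actual content of the theorem. First, the averaging argument only produces some $q^*$ near $p$ with $I^{(q^*)}(\widehat{\ff})=O(1)$; Friedgut's theorem applied there approximates $\widehat{\ff}$ in $\mu_{q^*}$, and transporting that approximation to $\mu_p$ (your ``smoothing step'') is not automatic even for monotone families --- this transfer is one of the main technical lemmas of \cite{DF} and cannot be dispatched in a clause. Second, the return trip from $\mu_p(\widehat{\ff}\setminus\mathcal G)\le\delta(r)p^r$ to $|\ff\setminus\mathcal J|\le c(r)\binom{n-r}{k-r}$ is not a routine concentration statement: $\widehat{\ff}\setminus\mathcal G$ need not be an upset, and a large collection of $k$-sets outside the junta could a priori contribute little $\mu_p$-mass; one needs the density-increase property across levels of the upset generated by $\ff\setminus\mathcal J$ (a local-LYM/Kruskal--Katona input) together with monotonicity of $\mathcal G$. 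Third, making $\mathcal J$ intersecting requires $\delta(r)p^r\ll p^{2j(r)}$, i.e.\ $\delta(r)\le p^{2j(r)-r}$, a $p$-dependent (hence $n$-dependent) smallness condition, whereas Friedgut's theorem yields a $\delta$ governed only by the influence and junta-size parameters; reconciling these is again real work in \cite{DF}. None of these is a wrong idea, but each is a missing proof rather than a missing detail, so the proposal should be treated as an outline of the known argument rather than a proof.
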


\subsection{Proof of Theorem~\ref{thmtau3}}\label{sec51}
Recall the expression of the size of $\mathcal C_3(n,k)$, obtained in the proof of Lemma~\ref{lemmin} (cf. \eqref{eqfz}):
\begin{footnotesize}
\begin{align}\label{sizec}
 \notag |\mathcal C_3(n,k)|\ =\ 3\ +\ &{n-2\choose k-2}-{n-k-2\choose k-2} + \\
 \notag  & {n-3\choose k-2}-{n-k-2\choose k-2}+\\
\notag   & {n-4\choose k-2}-{n-k-3\choose k-2}+\\
 \notag  &\cdots \\
   &{n-k-1\choose k-2}-{n-2k\choose k-2}.
\end{align}
\end{footnotesize}

Let $C$ be a sufficiently large constant, which value shall be clear later.\footnote{We use a subscript ``$C$'' in the inequalities, i.e., $\ge_C$, in which we need that $C$ is sufficiently large.} The case of $k\le C$ follows from the original result of Frankl. In what follows, we assume that $k\ge C$. Take any family $\ff\subset {[n]\choose k}$ with $\tau(\ff)=3$. Theorem~\ref{thmdf} implies that there exists a set $J$, $|J|\le c(5)$, and an intersecting family $\mathcal J^*\subset 2^{[J]}$, such that $|\ff \setminus \mathcal J|\le j(5){n-5\choose k-5}\le_C {n-5\choose k-4}$, where $\mathcal J:=\{F\in {[n]\choose k}\ :\  F\cap J\in \mathcal J^*\}$.

The first step of the proof is to show that $\mathcal J$ is, in fact, a dictatorship: $\mathcal J^*$ consists of one singleton. Indeed, if $\mathcal J^*$ contains a singleton, then we may as well assume that $\mathcal J^*$ consists of that only singleton. Otherwise,
any set in $\mathcal J\cap \ff$ must intersect $J$ in at least $2$ elements. Moreover, for any two elements we choose, there is a set in $F\in\ff$ that does not contain any of those two elements, so any set in $\mathcal J\cap \ff$ must intersect $F$. This gives the bound
\begin{equation}\label{eqsizehun}|\mathcal J\cap \ff|\le {|J|\choose 2}\Big({n-2\choose k-2}-{n-k-2\choose k-2}\Big),\end{equation}
and, given that
$${n-|J|^2\choose k-2}-{n-|J|^2-k+1\choose k-2}>_C 0.99\Big({n-2\choose k-2}-{n-k-2\choose k-2}\Big),$$
 we compare  \eqref{eqsizehun} with \eqref{sizec} and get that $|\mathcal J\cap \ff|<\frac 12 |\mathcal C_3(n,k)|$ (remark that we also used that $k>C$ and thus that the number of lines in \eqref{sizec} is  bigger than $|J|^2$). Since  $|\ff\setminus \mathcal J|\le {n-5\choose k-4}<\frac 12 |\mathcal C_3(n,k)|$, we get that $|\ff|<|\mathcal C_3(n,k)|$.

From now on, we may assume that $\mathcal J$ is a dictatorship, say, $J=\{1\}$ and $\mathcal J=\{\{1\}\}$.\footnote{We note that the remaining part of the argument works for any $n>2k\ge 8$.} The next part of the proof will make use of the bipartite switching trick. We shall transform our family $\ff$ into another family (denoted by $\ff''$), which will satisfy $\tau(\ff'')=3$, $|\ff''|\ge |\ff|$ (with strict equality in case $\ff'$ is not isomorphic to $\ff$) and, moreover, $\ff''(\bar 1)$ will have covering number $2$ and a much cleaner structure.

To that end, take any $\mathcal M=\{M_1,\ldots, M_z\}\subset \ff'(\bar 1)$ such that $\tau (\mathcal M)=2$ and $\mm$ is minimal w.r.t. this property. Remark that $z\ge 3$ due to the fact that $\tau(\mm)=2$ and $\mm$ is intersecting. The next part of the proof borrows notations and ideas of Theorem~\ref{thmfull2}. For each $l\in[z]$, we can find $i_l$ as in \eqref{eqil}. W.l.o.g., assume that $\{i_1,\ldots, i_z\}=[2,z+1]$. Since $|\ff(\bar 1)|\le {n-5\choose k-4}$, we may apply the same exchange argument via $G'_i$ as in Theorem~\ref{thmfull2} and get a family $\ff'$, such that $\ff'(\bar 1) = \mm\cup \mathcal U$, where $\mathcal U$ contains only sets that contain $[2,z+1]$, and $\ff(1)$ consists of all sets intersecting all sets in $\ff'(\bar 1)$. %Since, for any $i\in [2,z+1]$, there are many sets $F\in \ff'(1)$ such that $F\cap [2,z+1]=\{i\}$, one can easily conclude that $\tau (\ff')\ge 3$.\footnote{If $z\ge 4$, then this actually implies that any hitting set of size $3$ must pass through $1$.}
We repeat the same exchange with any element contained in all but at one set from $\mathcal M$. At the end, we may assume that each $U\in\mathcal U$ contains $[2,t']$ for $t'\ge z+1$. As in the proof of Theorem~\ref{thmfull2}, we get $|\ff'|>|\ff|$ unless $\ff'$ is isomorphic to $\ff$ (recall that $k\ge C$ by assumption).

If $\mm$ is isomorphic to $\mathcal T_2(k)$ (cf. \eqref{deft2}), then the number of the elements contained in exactly two sets (all but one sets) is $k+1$, and thus we may immediately conclude that $\mathcal U=\emptyset$: no $k$-set can contain a subset of size $k+1$. Otherwise, $\mm$ is not isomorphic to $\mathcal T_2(k)$.\footnote{This is only needed for the uniqueness of the extremal family $\mathcal C_3(n,k)$, since some of the exchanges we shall perform below may not necessarily strictly increase the size. But this does not pose problems since we will eventually arrive at a family $\ff''$ with $\ff''(\bar 1)=\mm$, which we will show to have strictly smaller size than that of $\mathcal C_3(n,k)$.}

Let us show that we may assume that $\mathcal U$ is empty. As we have said, it is clear if $t'\ge k+2$. Otherwise, consider the family $\mm':=\{M\setminus [2,t']\ :\  M\in\mm\}$ and note that sets in $\mm'$ have size at least $1$. If there is no element $i'\in [t'+1,n]$ that is contained in at least $2$ sets of $\mm'$, then take two elements $i\in M'$ and $j\in M''$, where $t'+1\le i<j\le n$ and $M',M''$ are distinct sets in $\mathcal M'$, and perform the $(i,j)$-shift on $\ff'$. It is easy to see that $\tau(S_{ij}(\mathcal M))=2$, moreover, is
$S_{ij}(\ff)$ is intersecting. Thus, we may replace $\ff'$ with $S_{ij}(\ff')$ and $\mm$ with $S_{ij}(\mm)$.

Next, we can assume that there is an element in $i'\in [t'+1,n]$ that is contained in at least $2$ sets of $\mm'$. Now we may reuse the argument of Theorem~\ref{thmclass2} again. Consider the last exchange graph from the proof of Theorem~\ref{thmclass2}, i.e., $G(t',I)$, where $I$ is formed as follows. We include $i'$ in $I$, as well as one element from each of the sets from $\mm'$ that do not contain $i'$. We have $|I|\le z-1$. (Note that, as before, some elements we chose may coincide, making the last inequality strict.) In terms of the proof of Theorem~\ref{thmclass2}, we have $z'=|I|+1\le z\le t'-1=z''$. Moreover, by the choice of $I$, we have $|\mm\cap \mathcal P_b(t',I)| =\emptyset$. Therefore, we can perform the exchange operations as before for all possible choices of $I$, concluding that either all sets in $\mathcal U$ must contain $i'$, or they must contain some fixed set $M'\in \mathcal M'$. The latter is, however, impossible, since it would again imply that a $k$-element set from $\mathcal U$ contains a $(\ge k+1)$-element set $M'\cup [2,t']$.

Therefore, we may assume that all sets in $\mathcal U$ contain $i'$, and thus all contain $[2,t']\cup\{i'\}$. Finally, we may perform exactly the same exchange operations as in the proof of Theorem~\ref{thmfull2}, but with $[2,t']$ replaced by $[2,t']\cup \{i'\}$ in the definition $\mathcal P_a(t', I)$ and $\mathcal P_b(t',I)$ (denoted by $\mathcal P_a'(t', I)$ and $\mathcal P_b'(t',I)$, respectively). The reason it will work now is the extra fixed element in $\mathcal P_b'(t',I)$, which makes the number of fixed elements in $\mathcal P_b'(t',I)$  at least as big as in $\mathcal P_a'(t',I)$.

After the switches, we obtain a family $\ff''$, which is intersecting, satisfies $|\ff''|\ge |\ff|$ and $\ff''(\bar 1) = \mathcal M$. Moreover, it is easy to see that $\tau (\ff'')=3$. %In what follows, we abuse notation and reuse the letter $\ff$ to denote $\ff''$.

Finally, we need to show that, among all {\it minimal} families, the choice of $\mathcal T_2(k)$ is the unique optimal. But this is a direct application of the second part of Lemma~\ref{lemmin} with $s=k$ and $[2,n]$ playing the role of $[m]$. Note that $n>2k$, and $m\ge 2k$. The proof of Theorem~\ref{thmtau3} is complete.

\subsection{Proof of Theorem~\ref{thmbounddeg}}\label{sec52}
Let $C$ be a sufficiently large constant, which value shall be clear later.\footnote{As in the proof of Theorem~\ref{thmtau3}, we use a subscript ``$C$'' in the inequalities, in which we need that $C$ is sufficiently large.} The case of $k\le C$ follows from the original result of Frankl. In what follows, we assume that $k\ge C$. First of all, we remark that $\mathcal D_{3/7}$ satisfies the condition of the theorem, since $\Delta(\mathcal D_{3/7}) = \frac 37+O(\frac kn)<_C \frac 37+\epsilon.$

Take any family $\ff\subset {[n]\choose k}$ satisfying the requirements. Theorem~\ref{thmdf} implies that there exists a set $J$, $|J|\le c(5)$, and an intersecting family $\mathcal J^*\subset 2^{[J]}$, such that $|\ff \setminus \mathcal J|\le j(5){n-5\choose k-5}\le_C \epsilon{n-5\choose k-4}$, where $\mathcal J:=\{F\in {[n]\choose k}\ :\  F\cap J\in \mathcal J^*\}$. We have
\begin{equation}\label{eqbigsets}\big|\ff\setminus \{F\ :\ |F\cap J|\le 3\}\big|\le 2^{c(5)}{n-4\choose k-4}\le_C \frac{\epsilon}2{n-3\choose k-3}.\end{equation}

If $\mathcal J^*$ contains a set $R$ of size at most $2$, then, since $\mathcal J^*$ is intersecting, one of the elements of $R$, say $i$, satisfies $d_i(\ff\cap \mathcal J)\ge \frac 12 |\ff\cap \mathcal J|$, and thus $d_i(\ff)\ge \frac 12\big(|\ff|-|\ff\setminus \mathcal J|\big)\ge \frac 12\big( |\ff|-{n-5\choose k-4}\big)$. Given that $|\ff|\ge |\mathcal D_{3/7}|>{n-3\choose k-3}$, we get $d_i(\ff)\ge \frac 12|\ff|\big(1-\frac kn\big)$, which is bigger than $c|\ff|$ for  $n>k/\epsilon$, a contradiction. Thus, $\mathcal J^*$ contains only sets of size at least $3$.
 Let us put $\mathcal J_3^*:=\mathcal J^*\cap {[n]\choose 3}$ and $\mathcal J_3:=\big\{F\in {[n]\choose k}\ :\  F\cap J\in \mathcal J_3^*\big\}$.

 Recall that a \underline{fractional covering number} $\tau^*(\mathcal G)$ of a family $\G\subset 2^{[n]}$ is the minimum value of $\sum_{i=1}^n w_i$, where $0\le w_i\le 1$ are real numbers, such that $\sum_{i\in G}w_i\ge 1$ for each $G\in \G$. Note that if we allow $w_i$ to take integer values only, then we get back to the definition of covering number. Clearly, $\tau(\G)\ge \tau^*(\G)$.

Arguing as above, we have $\tau^*(\mathcal J^*_3)>2$. Indeed, it is easy to see that, for any family $\G$, $\Delta(\G)\ge |\G|/\tau^*(\G)$. Therefore, if $\tau(\mathcal J^*_3)\le 2$  then, of course, $\tau^*(\mathcal J_3\cap \ff)\le 2$ and so at least one element $i\in J$ satisfies $d_i(\ff\cap\mathcal J_3)\ge \frac 12\big|\ff\cap \mathcal J_3\big|$. Since the vast majority of sets from $\ff$ are actually from $\ff\cap \mathcal J_3$ (due to \eqref{eqbigsets}), we again get $d_i(\ff)>c|\ff|$, a contradiction.

F\"uredi \cite{Fur} showed that, for an intersecting family $\G\subset {[n]\choose p}$, we have  $\tau^*(\G)\le p-1+1/p$, and, moreover, if $\G$ is not a projective plane of order $p-1$, then $\tau^*(\G)\le p-1$. Thus, since $\tau^*(\mathcal J^*_3)>2$, it must be isomorphic to the Fano plane $\mathcal P$. In what follows, we assume that $\mathcal J^*_3 = \mathcal P$.

It is not difficult to check by simple case analysis that any set intersecting all sets in $\mathcal P$ must actually contain a set from $\mathcal P$. Thus, we get that $\mathcal J^*\subset \{S\subset J\ :\  P\subset S\text{ for some }P\in \mathcal P\}.$ We thus may also assume that $J = \bigcup_{P\in \mathcal P} P = [7]$ and $\mathcal J = \mathcal D_{3/7}$.

Summarizing the discussion above, we may assume that the junta that approximates $\ff$ is isomorphic to $\mathcal D_{3/7}$. The last step is to show that $\ff$ is actually isomorphic to $\mathcal D_{3/7}$.
Assume that $\ff':=\ff\setminus \mathcal J^*$ is non-empty. Then, for each $F\in \ff'$, there exists $P\in \mathcal P$ such that $F\cap P = \emptyset$. For each $P\in \mathcal P$, put
\begin{align*}\mathcal J(P)\ :=\ &\Big\{F\ :\  P\subset F\in {[n]\choose k}\Big\},\\
\mathcal \ff(P)\ :=\ &\mathcal J(P)\cap \ff,\\
\ff'(P)\ :=\ &\big\{F\ :\  F\in \ff',\ F\cap P=\emptyset \big\}.
\end{align*}
Then $\ff(P)$ and $\ff'(P)$ are cross-intersecting, moreover, the former family is $(k-3)$-uniform, while the second one is $k$-uniform and satisfies $|\ff'(P)|\le \epsilon{n-5\choose k-4}\le_C {n-7\choose k-4}$. Thus, we can apply \eqref{eqcreasy} with $k-3,k$ and $n-3$ playing the roles of $a,b$ and $n$, respectively, and get that replacing $\ff(P)\cup \ff'(P)$ with $\mathcal J(P)$ in $\ff$ will strictly increase the size of $\ff$ (given that it was not there in the first place). %Moreover, the new family intersecting.
Repeating this for each $P\in\mathcal P$, we transform $\ff$ into $\mathcal D_{3/7}$. Moreover, if there was some change made in the process, then $|\ff|<|\mathcal D_{3/7}|$. Thus, we conclude that $\ff$ is isomorphic to $\mathcal D_{3/7}$.

\section{Conclusion}\label{sec6}

In this paper, we found the largest intersecting family $\ff\subset {[n]\choose k}$ with $\tau(\ff)\ge 3$, provided $n>Ck$ for some absolute constant $C$. Actually, we proved more than that. Provided that $\gamma(\ff)\le {n-5\choose k-4}$ and $\tau(\ff)\ge 3$, we showed that $|\ff|\le |\mathcal C_3(n,k)|$, with equality only if $\ff$ is isomorphic to $\mathcal C_3(n,k)$, for any $n>2k\ge 8$. With some effort, we can replace the condition on diversity by $\gamma(\ff)< {n-4\choose k-3}$. However, since we use juntas result to show that the diversity of any extremal $\ff$ must be small, the condition $n>Ck$, $k>C$ is necessary for our approach. This motivates the following problem.
\begin{pro}
  Determine $c(n,k,3)$ for all $n>2k$.
\end{pro}
We believe that the family $\mathcal C_3(n,k)$ should be extremal for all $n>2k\ge 10$.

What can one say about $c(n,k,t)$ for $t\ge 4$? Using juntas, one can similarly show that the largest family must have very small diversity (say, at most ${n-2t\choose k-2t}$) for $n>C(t)k$. Using similar proof logic, this  allows to deduce the result of \cite{FOT1} for $n>Ck^4$.  The main difficulty (at least for $n>Ck^t$) for $t\ge 5$ lies in the following problem.

\begin{pro}\label{propbol} Given an intersecting family $\ff$ of $k$-sets with $\tau(\ff)=t$, what is the maximum number of hitting sets of size $t$ it may have?
\end{pro}

Finally, we state the following question:
\begin{pro}\label{propbol} What is the maximum of $\gamma(\ff)$ for intersecting $\ff\subset {[n]\choose k}?$
\end{pro}

I showed in \cite{Kup21} that $\gamma(\ff)\le {n-3\choose k-2}$ for $n>Ck$ with some absolute constant $C$. In particular, this makes Theorem~\ref{thmfull1} complete: in that range, it covers all possible values of diversity. Following Frankl, I conjectured that the same bound should hold for $n\ge 3k$, however, a counterexample was provided by Huang \cite{Hua} for $n\le (2+\sqrt 3)k$. I still believe that $\gamma(\ff)\le {n-3\choose k-2}$ should hold for moderately large $n$, say, for $n\ge 5k$.  The case of smaller $n$ is also very interesting.\\

{\sc Acknowledgements:} I would like to thank Peter Frankl for introducing me to the area and for numerous interesting discussions we had on the topic.

\end{document}